\theoremstyle{plain}
\newtheorem{thm}{Theorem}[section]
\crefname{thm}{Theorem}{Theorems}
\theoremstyle{plain}
\newtheorem*{lem*}{Lemma}
\newtheorem{lem}[thm]{Lemma}
\crefname{lem}{Lemma}{Lemmas}
\theoremstyle{plain}
\theoremstyle{plain}
\newtheorem*{claim*}{Claim}
\crefname{claim}{Claim}{Claims}
\theoremstyle{definition}
\newtheorem{defn}[thm]{Definition}
\theoremstyle{plain}
\newtheorem{conjecture}[thm]{Conjecture}
\newtheorem{prop}[thm]{Proposition}
\crefname{prop}{Proposition}{Propositions}
\theoremstyle{definition}
\theoremstyle{definition}
\theoremstyle{plain}
\newtheorem{ques}[thm]{Question}
\crefname{appsec}{Appendix}{Appendices}
\date{}
\let\originalleft\left
\let\originalright\right
\renewcommand{\left}{\mathopen{}\mathclose\bgroup\originalleft}
\renewcommand{\right}{\aftergroup\egroup\originalright}
\renewcommand*{\UrlTildeSpecial}{%
  \do\~{%
    \mbox{%
      \fontfamily{ptm}\selectfont
      \textasciitilde
    }%
  }%
}%
\let\Url@force@Tilde\UrlTildeSpecial
\begin{document}

\title{The smallest singular value of random combinatorial matrices}

\author{Tuan Tran\thanks{Discrete Mathematics Group, Institute for Basic Science (IBS), 34126 Daejeon, Republic of Korea. Email:
\href{mailto:tuantran@ibs.re.kr} {\nolinkurl{tuantran@ibs.re.kr}}.
This work was supported by the Institute for Basic Science (IBS-R029-Y1).}}

\maketitle
\global\long\def\bC{\mathbb{C}}
\global\long\def\bE{\mathbb{E}}
\global\long\def\bN{\mathbb{N}}
\global\long\def\Pr{\mathbb{P}}
\global\long\def\bR{\mathbb{R}}
\global\long\def\bS{\mathbb{S}}
\global\long\def\cS{\mathcal{S}}
\global\long\def\bZ{\mathbb{Z}}
\global\long\def\cB{\mathcal{B}}
\global\long\def\calD{\mathcal{D}}
\global\long\def\cE{\mathcal{E}}
\global\long\def\cF{\mathcal{F}}
\global\long\def\cH{\mathcal{H}}
\global\long\def\cQ{\mathcal{Q}}
\global\long\def\cL{\mathcal{L}}
\global\long\def\cY{\mathcal{Y}}
\global\long\def\cN{\mathcal{N}}
\global\long\def\cM{\mathcal{M}}
\global\long\def\tL{\widetilde{L}}
\global\long\def\tsigma{\widetilde{\sigma}}
\global\long\def\tu{\widetilde{u}}
\global\long\def\tbu{\widetilde{\bm{u}}}
\global\long\def\tw{\widetilde{\bm{w}}}
\global\long\def\CLCD{\mathrm{CLCD}}
\global\long\def\Comp{\mathrm{Comp}}
\global\long\def\calC{\mathrm{Cons}}
\global\long\def\La{\mathrm{\Lambda}}
\global\long\def\dist{\mathrm{dist}}
\global\long\def\Incomp{\mathrm{Incomp}}
\global\long\def\Sparse{\mathrm{Sparse}}
\global\long\def\Var{\mathrm{Var}}
\global\long\def\BL{\mathrm{BL}}
\global\long\def\Per{\mathrm{Per}}
\global\long\def\Ber{\mathrm{Ber}}
\global\long\def\Proj{\mathrm{Proj}}
\global\long\def\op{\mathrm{op}}
\global\long\def\HS{\mathrm{HS}}
\global\long\def\supp{\mathrm{supp}}
\global\long\def\spn{\mathrm{span}}
\global\long\def\sgn{\mathrm{sgn}}
\global\long\def\one{\boldsymbol{1}}
\global\long\def\range#1{\left[#1\right]}

\global\long\def\floor#1{\left\lfloor #1\right\rfloor }
\global\long\def\ceil#1{\left\lceil #1\right\rceil }
\global\long\def\product#1{\left\langle #1\right\rangle}
\global\long\def\al{\alpha}
\global\long\def\be{\beta}
\global\long\def\ga{\gamma}
\global\long\def\D{\mathrm{D}}
\global\long\def\de{\delta}
\global\long\def\eps{\varepsilon}
\global\long\def\la{\lambda}
\global\long\def\vp{\varphi}
\newcommand{\norm}[1]{\left| #1 \right|}
\newcommand\restr[2]{{
		\left.\kern-\nulldelimiterspace 
		#1 
		\vphantom{\big|} 
		\right|_{#2} 
}}

\begin{abstract}
Let $Q_n$ be a random $n\times n$ matrix with entries in $\{0,1\}$ whose rows are independent vectors of exactly $n/2$ zero components. We show that the smallest singular value $s_n(Q_n)$ of $Q_n$ satisfies
\[
\Pr\Big\{s_n(Q_n)\le \frac{\eps}{\sqrt{n}}\Big\} \le C\eps + 2 e^{-cn} \quad \text{for every} \enskip \eps \ge 0,
\]
which is optimal up to the constants $C,c>0$. This implies earlier results of Ferber, Jain, Luh and Samotij \cite{FJLS20} as well as Jain \cite{Jain20}. In particular, for $\eps=0$, we obtain the first exponential bound in dimension for the singularity probability:
\[
\Pr\big\{Q_n \enskip \text{is singular}\big\} \le 2 e^{-cn}.
\]

To overcome the lack of independence between entries of $Q_n$, we introduce an arithmetic-combinatorial invariant of a pair of vectors, which we call {\em a Combinatorial Least Common Denominator} (CLCD). We prove a small ball probability inequality for the combinatorial statistic $\sum_{i=1}^{n}a_iv_{\sigma(i)}$ in terms of the CLCD of the pair $(\bm{a},\bm{v})$, where $\sigma$ is a uniformly random permutation of $\{1,2,\ldots,n\}$ and $\bm{a}:=(a_1,\ldots,a_n), \bm{v}:=(v_1,\ldots,v_n)$ are real vectors. This inequality allows us to derive strong anti-concentration properties for the distance between a fixed row of $Q_n$ and the linear space spanned by the remaining rows, and prove the main result.
\end{abstract}

\section{Introduction}

Given a random $n\times n$ matrix $A$, the basic question is: {\em how likely is $A$ to be invertible,} and, more quantitatively, {\em well conditioned}? These questions can be expressed in terms of the singular values $s_k(A)$ of $A$, which are defined as the eigenvalues of $\sqrt{A^{\intercal} A}$ arranged in non-decreasing order. Of particular significance are the largest and smallest singular values, which admit the following variational characterization:
\[
s_1(A)=\max_{\bm{v \in \bS^{n-1}}}\norm{A\bm{v}} \quad \text{and} \quad  s_n(A)=\min_{\bm{v}\in \bS^{n-1}}\norm{A\bm{v}},
\]
where $\norm{\cdot}$ denotes the Euclidean norm on $\bR^{n}$, and $\bS^{n-1}$ is the unit Euclidean sphere in $\bR^{n}$.

\subsection{Random matrices with independent entries}

The behavior of the smallest singular values of random matrices with independent entries have been intensively studied \cite{BVW10,KKS95,LPRT05,LR12,LT20,Livshyts18,LTV19,MP14,RT18,Rudelson05,RV08,RV09,TV06,TV07,TV09,TV10b,Tatarko,Tikhomirov15,Tikhomirov16,Tikhomirov17,Tikhomirov20,Vershynin11}, in part due to applications in computer science and engineering. For random matrices with independent $N(0,1)$ Gaussian entries, Edelman \cite{Edelman88} and Szarek \cite{Szarek91} showed 

\begin{equation}\label{Spielman-Teng gaussian}
\Pr\Big\{s_n(A)\le \frac{\eps}{\sqrt{n}}\Big\} \sim \eps,
\end{equation}

thereby verifying a conjecture of Smale \cite{Smale85} as well as a speculation of von Neumann and Goldstine \cite{NG47}. Edelman's proof relied heavily on the rotation invariance property of the Gaussian distribution. Extending Edelman's theorem to general distributions is non-trivial. Spielman and Teng \cite{ST04} conjectured that \eqref{Spielman-Teng gaussian} should hold for random sign matrices, up to an exponentially small term that accounts for their singularity probability:

\begin{equation*}
\Pr\Big\{s_n(A)\le \frac{\eps}{\sqrt{n}}\Big\} \le \eps +2e^{-cn}, \quad \eps \ge 0.
\end{equation*}

After early breakthroughs of Rudelson \cite{Rudelson05} and Tao and Vu \cite{TV09}, it was shown in a remarkable work by Rudelson and Vershynin \cite{RV08} that Spielmann-Teng's conjecture holds (up to a multiplicative constant) for all random matrices with i.i.d. centered subgaussian entries of variance $1$. This result has been greatly extended and refined in subsequent works \cite{Livshyts18,LTV19,RT18,RV09,Tatarko,Vershynin11}. In particular, Livshyts, Tikhomirov and Vershynin \cite{LTV19} established the same bound as Rudelson and Vershynin for random matrices $A$ whose entries are independent random variables satisfying a uniform anti-concentration estimate, and such that the expected sum of the squares of the entries is $O(n^2)$.
 
\subsection{Random matrices with dependent entries}
The smallest singular value problem becomes significantly more difficult when one considers models of random matrices with dependencies between entries, and much less has been known \cite{ACW16,BCC12,CMMM19,FJLS20,Jain20,LLTTY17,LLTTY19,Meszaros18,Nguyen13,Nguyen14,NM18,NV13}. For a survey of the topic we refer the reader to \cite{Vu20}. Our main focus is on a simple model of random combinatorial matrices with non-independent entries. We begin with a brief discussion of relevant results.

For an even integer $n$, let $Q_n$ be a random $n\times n$ matrix  with entries in $\{0,1\}$ whose rows are independent vectors of exactly $n/2$ zero components. One can view $Q_n$ as the bipartite adjacency matrix of a random $n/2$-regular bipartite graph with parts of size $n$.
Define $p_n$ to be the probability that $Q_n$ is singular. Trivially,
\[
p_n \ge \frac{1}{\binom{n}{n/2}}=(\frac12 +o(1))^n
\]
as the RHS is the probability that the first two rows of $Q_n$ are equal. Nguyen \cite[Conjecture 1.4]{Nguyen13} conjectured that this simple bound is close to the truth.

\begin{conjecture}[Nguyen  \cite{Nguyen13}]\label{conj:Nguyen}
	$p_n=(\frac12 +o(1))^n$.
\end{conjecture}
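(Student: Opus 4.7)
The plan is to follow the standard paradigm for singularity of structured random matrices: show that with probability $1-(\tfrac12+o(1))^n$ no nonzero vector $\bm{v}\in\bR^n$ satisfies $Q_n\bm{v}=0$, by classifying the potential null directions $\bm{v}\in\bS^{n-1}$ according to their arithmetic/geometric structure. Fixing small constants $\rho,\delta>0$, I would decompose the sphere as $\bS^{n-1}=\Comp(\rho,\delta)\cup\Incomp(\rho,\delta)$, where the compressible set consists of unit vectors within $\ell_2$-distance $\delta$ of a $\rho n$-sparse vector. For compressible $\bm{v}$, the event $Q_n\bm{v}=0$ should be controlled by a direct combinatorial union bound exploiting the rigidity of the exactly-$n/2$-zeros row distribution; the key qualitative point is that the only compressible null directions producing probability comparable to $2^{-n}$ come from $\bm{v}\propto e_i-e_j$, which corresponds to the trivial event that two rows of $Q_n$ coincide, and every other sparse pattern should contribute $o(2^{-n})$.

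For incompressible $\bm{v}$, the plan is to invert the problem via distances: letting $X_i$ denote the $i$-th row of $Q_n$ and $H_i$ the span of the remaining rows, the negative second moment inequality gives
\[
s_n(Q_n)^{-2}\le \sum_{i=1}^n \dist(X_i,H_i)^{-2},
\]
and if $\bm{n}(H_i)$ is a unit normal to $H_i$, then $\dist(X_i,H_i)=\product{X_i,\bm{n}(H_i)}$ is exactly a combinatorial sum of the form $\sum_j a_j v_{\sigma(j)}$ to which the CLCD-based small ball probability inequality applies. Combining this with a net argument over $\Incomp(\rho,\delta)$ stratified by the level sets of $\CLCD(\bm{n}(H_i))$ should translate sharp anti-concentration on this combinatorial statistic into sharp singularity bounds; this is essentially the route by which one would already obtain a bound of the shape $2e^{-cn}$.

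The principal obstacle, and the reason Nguyen's conjecture remains open even after the present work's $e^{-cn}$ bound, is producing the \emph{sharp constant} $\tfrac12$ rather than some unspecified $c>0$. Two distinct difficulties arise: (i) the CLCD-based anti-concentration must be pushed to its extremal form, with threshold constants that genuinely reflect the entropy $\log_2\binom{n}{n/2}\sim n$ rather than the more forgiving exponents sufficient for $e^{-cn}$; and (ii) one must carefully isolate normal directions very close to the trivial dependencies $\tfrac{1}{\sqrt{2}}(e_i-e_j)$, since such directions are precisely what produces singularity contributions of order $\binom{n}{n/2}^{-1}$, matching the lower bound. I would expect the right route is an adaptation of Tikhomirov's smoothing and iterative-sparsification framework for Bernoulli matrices, with the combinatorial symmetry $S_n$ built into a refined permutation-sensitive CLCD, though reconciling this with the \emph{sharp} constant $\tfrac12$ appears to require genuinely new ideas beyond what the pure distance-to-hyperplane approach delivers.
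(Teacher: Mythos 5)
This statement is a \emph{conjecture} (Nguyen's Conjecture 1.4), not a theorem of the paper: the paper states it as motivation and context, and proves only the weaker bound $p_n \le 2e^{-cn}$ (\cref{thm:row regular} with $\eps=0$). There is therefore no proof in the paper to compare against, and you should not be expected to supply one.

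Your proposal is in effect a correct diagnosis of the situation rather than a proof, and it accurately reflects the state of the art. The framework you sketch --- sphere decomposition into structured and unstructured directions, reduction to anti-concentration of $\dist(X_i,H_i)=|\langle X_i,\bm{n}(H_i)\rangle|$ which is the combinatorial statistic $W_{\bm{v}}$, CLCD-stratified nets over incompressible normals --- is exactly the machinery the paper uses to get $2e^{-cn}$. A few cosmetic deviations: you invoke the negative second moment inequality, whereas the paper uses the Rudelson--Vershynin ``invertibility via distance'' lemma (\cref{invertibility via distance}); and the paper actually runs two parallel decompositions, almost-constant/non-almost-constant for $Q_n$ and compressible/incompressible for $Q_n^{\intercal}$, rather than the single compressible/incompressible split you describe. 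Neither difference affects the order of the final bound.

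Your identification of the two obstacles to the sharp constant is the right one: (i) the CLCD-based small ball estimate comes with unoptimized multiplicative constants that are nowhere near the entropy threshold $\log_2\binom{n}{n/2}\sim n$, and (ii) one needs an isolated treatment of the near-trivial null directions $\propto e_i - e_j$ to account for the $\binom{n}{n/2}^{-1}$ contribution matching the lower bound, in the spirit of Tikhomirov's sharp singularity proof for Bernoulli matrices. Your final assessment --- that the approach as-is delivers $e^{-cn}$ but new ideas are needed for $(\frac12+o(1))^n$ --- is precisely what the paper itself concludes. No false claims here, just an honest (and correct) statement that the conjecture remains open.
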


It is already non-trivial to justify that $p_n$ approaches $0$ as $n$ tends to infinity. This was first achieved by Nguyen \cite{Nguyen13}, who proved that $p_n$ decays faster than any polynomial, using an inverse Littlewood-Offord theorem of Nguyen and Vu \cite{NV11}. Recently, Ferber, Jain, Luh and Samotij \cite{FJLS20} established a counting result for the inverse Littlewood-Offord problem to improve the bound further to $p_n \le \exp(-n^{0.1})$. 

The question of obtaining quantitative lower tail bounds on the least singular value of $Q_n$ was considered by Nguyen
and Vu \cite{NV13}, where it was shown that for any $C>0$ there exists $D>0$ for which $\Pr\Big\{s_n(Q_n) \le n^{-D}\Big\} \le n^{-C}$. Building on the work of Ferber et al., Jain \cite{Jain20} obtained a better bound:

\[
\Pr\Big\{s_n(Q_n)\le \frac{\eps}{n^2}\Big\} \le C\eps + 2e^{-n^{0.0001}}.
\]

The following is our main result.

\begin{thm}\label{thm:row regular}
There exist constants $C,c>0$ such that
\[
\Pr\Big\{s_n(Q_n)\le \frac{\eps}{\sqrt{n}}\Big\} \le C\eps + 2e^{-cn}, \quad \eps \ge 0. 
\]
\end{thm}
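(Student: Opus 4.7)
My plan follows the Rudelson--Vershynin invertibility framework, adapted to the permutation dependence of the rows of $Q_n$. Split $\bS^{n-1}=\Comp(\delta,\rho)\cup\Incomp(\delta,\rho)$. For the compressible part I use a standard net argument: $\Comp(\delta,\rho)$ admits a net of cardinality $e^{cn}$ (with $c$ small) at scale $\eps/n$, and for any fixed $\bm v$ in the net, each row contribution to $\|Q_n\bm v\|^2$ is $\Omega(1)$ with constant probability by a one-row Littlewood--Offord estimate using only the $n/2$-regularity of the rows. Tensorisation and a union bound, combined with the crude operator-norm bound $\|Q_n\|_{\op}\le n$ to absorb the approximation error, give a $2e^{-c'n}$ tail on $\inf_{\bm v\in\Comp}\|Q_n\bm v\|\le\eps/\sqrt n$.

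For the incompressible part I use the invertibility-via-distance inequality
\[
\Pr\Big\{\inf_{\bm v\in\Incomp}\|Q_n\bm v\|\le \eps/\sqrt n\Big\}\le \frac{1}{n}\sum_{i=1}^n \Pr\big\{\dist(R_i,H_i)\le C\eps\big\},
\]
where $R_i$ is the $i$-th row and $H_i=\spn(R_j:j\ne i)$. Fix $i=n$, condition on $R_1,\ldots,R_{n-1}$ on the high-probability event --- itself a consequence of the compressible analysis applied to the first $n-1$ rows --- that $H_n$ admits an incompressible unit normal $\bm v$, and use $\dist(R_n,H_n)\ge |\langle R_n,\bm v\rangle|$. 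Since $R_n$ is a uniform permutation of the fixed vector $\bm a=(\underbrace{1,\ldots,1}_{n/2},\underbrace{0,\ldots,0}_{n/2})$, one has $\langle R_n,\bm v\rangle=\sum_j a_j v_{\sigma(j)}$ with $\sigma\in S_n$ uniform, so the problem reduces to proving a small-ball inequality for this combinatorial statistic when $\bm v$ is incompressible.

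The heart of the paper, and what I expect to be the main obstacle, is precisely this small-ball inequality: because the coordinates of $R_n$ are \emph{not} independent, one cannot invoke the classical Littlewood--Offord machinery directly. My approach is to introduce the Combinatorial Least Common Denominator $\CLCD(\bm a,\bm v)$, heuristically the smallest $\theta>0$ for which many of the rescaled pair-differences $\theta(v_j-v_k)$ lie close to the integers, and to prove via Esseen's inequality that
\[
\sup_{t\in\bR} \Pr\Big\{\Big|\sum_{j=1}^n a_j v_{\sigma(j)} - t\Big|\le \eps\Big\}\le C\eps + \frac{C}{\CLCD(\bm a,\bm v)}.
\]
The required bound on $\bE \exp\!\big(2\pi i \xi \sum_j a_j v_{\sigma(j)}\big)$ does not factor as in the i.i.d.\ setting; it must come instead from a swap/decoupling identity that rewrites the expectation as an average of cosines $\cos(2\pi\xi(v_j-v_k))$ over pairs $(j,k)$, a quantity that the CLCD is tailor-made to control.

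With the pointwise small-ball inequality in place, the proof is completed by an inverse Littlewood--Offord step. The set of incompressible $\bm v\in\bS^{n-1}$ with $\CLCD(\bm a,\bm v)\le e^{cn}$ should admit an $\eps$-net of subexponential cardinality; union-bounding over this net --- against the event that the first $n-1$ rows annihilate $\bm v$, whose probability is estimated through $n-1$ independent applications of the small-ball bound --- shows that such $\bm v$ contribute only $e^{-cn}$ to $\Pr\{\dist(R_n,H_n)\le \eps\}$, while vectors with $\CLCD(\bm a,\bm v)> e^{cn}$ contribute at most $C\eps$ by the small-ball bound directly. Summing over $i=1,\ldots,n$ and combining with the compressible estimate yields \cref{thm:row regular}.
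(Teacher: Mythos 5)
Your high-level plan matches the paper's Rudelson--Vershynin framework with a combinatorial LCD, Esseen's inequality, the Roos characteristic-function estimate (your \textquote{swap/decoupling identity}), and the invertibility-via-distance reduction. However, there are two genuine gaps.

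\textbf{The operator-norm obstruction.} You propose to discretize $\Comp(\de,\rho)$ at scale $\eps/n$ and \textquote{absorb the approximation error} with $\lVert Q_n\rVert_{\op}\le n$. This does not close: if $\bm{w}$ is within Euclidean distance $\be$ of $\bm{v}$ then the approximation error in $\lVert Q_n\bm{v}\rVert$ is as large as $\be\lVert Q_n\rVert_{\op}\approx\be\,n/2$, so to keep this below the needed threshold $O(\sqrt{n})$ you are forced to take $\be\lesssim 1/\sqrt{n}$, and then the net cardinality over compressible vectors scales like $\binom{n}{\de n}(C\sqrt n)^{\de n}=e^{\Theta(\de n\log n)}$, which is super-exponential and overwhelms the per-vector probability $e^{-cn}$. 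This is exactly the difficulty the paper flags: $\lVert Q_n\rVert_{\op}=n/2$, not $O(\sqrt n)$. The paper overcomes it in two ways. For the almost-constant/non-almost-constant analysis it exploits a spectral gap: $\lVert\left.Q_n\right|_{\cH}\rVert_{\op}=O(\sqrt n)$ with high probability on the hyperplane $\cH=\{\bm v\colon\sum_i v_i=0\}$ (\cref{restricted norm}), combined with a rounding trick (\cref{rounding}) that routes the approximation error through $\cH$ and picks up only a $\lVert A\rVert_{\op}/n$ penalty from the $\bm 1$-direction. For the compressible part of \cref{thm:row regular} proper, the paper instead uses a Hilbert--Schmidt-based sharp net (\cref{sharp net}, after Livshyts) whose approximation error is $\frac{2\be}{\al\sqrt n}\lVert Q_n\rVert_{\HS}\lesssim\sqrt n$, together with a new per-vector anti-concentration bound (\cref{lem:anti-conc}). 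Without one of these devices, the compressible (and non-almost-constant level-set) estimates do not go through.

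\textbf{Incompressible is not the right condition for the small-ball lemma.} Your distance step conditions on the normal $\bm v$ of $H_n$ being \emph{incompressible}, and then invokes the CLCD-based small-ball inequality. But the small-ball bound (\cref{binomial}/\cref{thm:L-O}) requires $\lVert\D(\bm v)\rVert\ge b\sqrt n$, i.e.\ the pairwise differences $v_i-v_j$ must carry $\Omega(1)$ mass, and incompressibility does not imply this. A vector with most coordinates near a common nonzero value $\la$ (an \emph{almost-constant} vector in the paper's terminology, cf.\ \cref{defn:decomposition}) can be incompressible while having $\lVert\D(\bm v)\rVert=o(\sqrt n)$; for such $\bm v$ the statistic $\langle R_n,\bm v\rangle$ is nearly deterministic (it concentrates around $\la n/2$) and the Esseen/Roos machinery gives nothing. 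This is precisely why the paper replaces the compressible/incompressible split by the almost-constant/non-almost-constant split $\bS^{n-1}=\calC(\de,\rho)\cup\cN(\de,\rho)$ when proving the distance theorem, and first establishes via \cref{Invertibility constant vectors} that the normal is with high probability in $\cN(\de,\rho)$, which by \cref{non-constant separated} guarantees $\lVert\D(\bm v)\rVert\gtrsim_{\de,\rho}\sqrt n$ as required. Your write-up would need this stronger conditioning, and the proof that it holds again runs into the operator-norm issue described above.
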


{\bf Remark.}  (1) \cref{thm:row regular} improves on the aforementioned works of Ferber, Jain, Luh and Samotij \cite{FJLS20}, and Jain \cite{Jain20}. 

\vspace*{0.2cm}
(2) For $\eps=0$, \cref{thm:row regular} yields the first exponential bound for the singularity probability:
\[
\Pr\big\{Q_n \enskip \text{is singular}\big\} \le 2 e^{-cn}.
\]

\vspace*{0.2cm}
(3) \cref{thm:row regular} continues to hold in the more general case when the sum of each row is $d$, where $\min(d,n-d)\gtrsim n$. Here we have focused on the case $n$ is even and $d=n/2$ for ease of exposition.

\subsection{The main tools}

The {\em L\'evy concentration function} of a random variable $\xi$ is defined for $\eps>0$ as
\[
\cL(\xi,\eps):=\sup_{x}\Pr\{|\xi-x|<\eps\}.
\] 

The following theorem is our main tool in proving \cref{thm:row regular}.

\begin{thm}[Distances] \label{thm:distances}
Let $R_1,\ldots,R_n$ denote the row vectors of $Q_n$, and consider the subspace $H_n=\spn(R_1,\ldots,R_{n-1})$.	Let $\bm{v}$ be a random unit vector orthogonal to $H_n$ and measurable with respect to the sigma-field generated by $H_n$. Then
\[
\cL\left(\product{R_n,\bm{v}}, \eps\right) \le C\eps + 2e^{-cn} \quad \text{for every } \eps \ge 0,
\]
where $C>0$ and $c\in (0,1)$ are constants. In particular, we have
\[
\Pr\big \{\dist(R_n,H_n) \le \eps \big\} \le C\eps + 2e^{-cn}.
\]
\end{thm}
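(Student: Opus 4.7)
The plan is to adapt the Rudelson--Vershynin invertibility-via-distance paradigm, with the classical LCD small-ball inequality replaced by the CLCD inequality announced in the abstract. Since $R_n$ is uniform on $\{0,1\}^n$-vectors with exactly $n/2$ ones and is independent of $H_n$ (hence of $\bm{v}$), I would write $(R_n)_i = a_{\sigma(i)}$ for a fixed pattern $\bm{a}\in\{0,1\}^n$ with $n/2$ ones and a uniform random permutation $\sigma$ of $[n]$ independent of $\bm{v}$. Then
\[
\product{R_n,\bm{v}} \;=\; \sum_{i=1}^{n} a_i\, v_{\sigma^{-1}(i)},
\]
which is precisely the combinatorial statistic whose L\'evy concentration function is governed by $\CLCD(\bm{a},\bm{v})$. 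Conditioning on $\bm{v}$ and invoking the CLCD small-ball inequality should yield $\cL\bigl(\product{R_n,\bm{v}},\eps\mid\bm{v}\bigr)\le C\eps + C/\CLCD(\bm{a},\bm{v})$, so the task reduces to showing $\CLCD(\bm{a},\bm{v})\ge e^{cn}$ with probability at least $1-e^{-cn}$.

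\textbf{Ruling out compressible $\bm{v}$.} Following Rudelson--Vershynin, decompose $\bS^{n-1}=\Comp\cup\Incomp$ and first show $\Pr(\bm{v}\in\Comp)\le e^{-cn}$. If $\bm{v}$ lay within $\delta$ of a $\delta n$-sparse unit vector $\bm{w}$, then $|\product{R_i,\bm{w}}|\lesssim\delta\sqrt{n}$ for every $i<n$; a union bound over a standard sparse net of size $\binom{n}{\delta n}(C/\delta)^{\delta n}$, combined with constant-order anti-concentration of each permutation sum $\product{R_i,\bm{w}}$ (via an Erd\H{o}s--Littlewood--Offord estimate for exchangeable sums), kills this regime for suitably small $\delta$. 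The standard high-probability bound $\|Q_n\|_{\op}\le C\sqrt{n}$ lets one transfer closeness $\|\bm{v}-\bm{w}\|\le\delta$ to smallness of $|\product{R_i,\bm{w}}|$.

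\textbf{Stratifying incompressible $\bm{v}$ by CLCD.} On $\{\bm{v}\in\Incomp\}$, partition into dyadic shells $\cS_D=\{\bm{w}\in\Incomp:\CLCD(\bm{a},\bm{w})\in[D,2D]\}$ for $D\in[C_0\sqrt{n},\,e^{c_0n}]$. For each such $D$ I would construct an $\eps_D$-net $\cN_D\subset\cS_D$ at scale $\eps_D\asymp 1/D$, adapting the Rudelson--Vershynin level-set count to the CLCD setting. Since $\bm{v}\perp R_i$ for $i<n$, any $\bm{w}\in\cN_D$ with $\|\bm{v}-\bm{w}\|\le\eps_D$ satisfies $|\product{R_i,\bm{w}}|\le C\eps_D\sqrt{n}$ for all $i<n$, so independence of $R_1,\ldots,R_{n-1}$ combined with the CLCD inequality applied to each summand (using $\CLCD(\bm{a},\bm{w})\asymp D$) gives
\[
\Pr\bigl(\|\bm{v}-\bm{w}\|\le\eps_D\bigr)\le \bigl(C\sqrt{n}/D\bigr)^{n-1}.
\]
A careful calibration of $\eps_D$ and $|\cN_D|$, summed over the dyadic scales $D\le e^{c_0n}$, should yield $\Pr(\CLCD(\bm{a},\bm{v})\le e^{c_0n})\le e^{-c'n}$. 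Combined with the direct CLCD estimate on the complementary event, this proves the small-ball bound; the distance bound then follows from $\dist(R_n,H_n)\ge|\product{R_n,\bm{v}}|$ on the event $\dim H_n = n-1$, which holds off an exponentially small set.

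\textbf{The main obstacle.} The crux is the net-counting for the $\CLCD$ level sets. In the classical setting, $\mathrm{LCD}(\bm{v})\approx D$ forces $\bm{v}$ close to a lattice point $\bm{p}/D$ and the cardinality $|\cN_D|\le(CD/\sqrt{n})^n$ can be read off immediately; here $\CLCD$ is a joint invariant of $(\bm{a},\bm{v})$ defined through inner products $\product{\bm{a}-\bm{a}',\bm{v}}$ over balanced $0/1$ perturbations $\bm{a}'$ of $\bm{a}$, so the quantization structure is more delicate. My expectation is that the well-spread combinatorial structure of $\bm{a}$---being a balanced equal-size $0/1$-vector---preserves an RV-type $(CD/\sqrt{n})^n$ level-set bound via a careful pigeonhole/entropy argument, in which case the product $|\cN_D|\cdot(C\sqrt{n}/D)^{n-1}$ telescopes to a geometric series bounded by $e^{-cn}$ uniformly over the dyadic scale $D\le e^{c_0 n}$.
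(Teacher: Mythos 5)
Your overall architecture (reduce to the permutation statistic $\sum a_i v_{\sigma(i)}$, condition on $\bm{v}$, invoke the CLCD small-ball bound, and prove a structural theorem showing $\CLCD$ of the normal vector is exponentially large) matches the paper. However, there is a conceptual mismatch in the decomposition of the sphere. You propose the Rudelson--Vershynin partition $\bS^{n-1}=\Comp(\de,\rho)\cup\Incomp(\de,\rho)$, but the hypothesis of the CLCD small-ball estimate (\cref{binomial}) is $\norm{\D(\bm{v})}\ge b\sqrt{n}$, and $\D(\bm{v})$ is the vector of pairwise differences $v_i - v_j$, which is invariant under translating $\bm{v}$ by a multiple of $\bm{1}$. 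Consequently the set on which the small-ball estimate degenerates is not the set of (nearly) sparse vectors but the set of \emph{almost-constant} vectors $\calC(\de,\rho)$ (those with $(1-\de)n$ coordinates within $\rho/\sqrt{n}$ of a common value $\la$), and these need not be compressible at all when $\la\ne 0$. This is why the paper uses the $\calC(\de,\rho)\cup\cN(\de,\rho)$ decomposition of Litvak et al.\ rather than $\Comp\cup\Incomp$, and proves a separate uniform lower bound on $\norm{Q_n\bm{v}}$ over $\calC(\de,\rho)$ (\cref{Invertibility constant vectors}) to rule those vectors out. An Erd\H os--Littlewood--Offord / sparse-net argument aimed at $\Comp$ would not touch the almost-constant but non-compressible vectors on which $\CLCD$ can be small, and the level-set stratification over $\Incomp$ would then include vectors where \cref{binomial} does not apply, so the argument as you sketch it does not close.

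There is also a concrete error in the compressible step: you invoke a ``standard high-probability bound $\norm{Q_n}_{\op}\le C\sqrt{n}$.'' For this model that is false; $Q_n\bm{1}=(n/2)\bm{1}$, so $\norm{Q_n}_{\op}= n/2$ deterministically. The paper works around this with a \emph{restricted} operator norm bound: $\lVert\restr{Q_n}{\cH}\rVert_{\op}=O(\sqrt{n})$ with high probability where $\cH=\{\bm{v}:\sum_i v_i=0\}$ (\cref{restricted norm}), together with a rounding step (\cref{rounding}) that controls the component of $\bm{v}-\bm{w}$ along $\bm{1}$ separately. Any net/approximation argument in this paper must use this decomposition along $\cH\oplus\bR\bm{1}$; your transfer of closeness $\norm{\bm{v}-\bm{w}}\le\delta$ to $|\langle R_i,\bm{w}\rangle|\lesssim\delta\sqrt{n}$ via $\norm{Q_n}_{\op}\lesssim\sqrt{n}$ would not be valid.

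Finally, the obstacle you flag at the end, the level-set net count for $\CLCD$, is real but less mysterious than you suggest, once the actual definition is used. The paper's $\CLCD_{\al,\ga}(\bm{v})$ is a single-vector quantity (it measures integer approximability of $\theta\cdot\D(\bm{v})\in\bR^{\binom{n}{2}}$), not a joint invariant of $(\bm{a},\bm{v})$ defined through inner products with $0/1$ perturbations. The discretization (\cref{discretization level sets}) proceeds by picking a coordinate $j$ with $|v_j|\lesssim n^{-1/2}$, observing that the row $\bm{v}^{(j)}$ of $\D(\bm{v})$ has a good rational approximation $\bm{p}^{(j)}/T$ with $T\in[H,2H]$, and then quantizing $v_j$ and $T$; this yields a net of cardinality essentially $(CH/\sqrt{n})^n$ as you hoped, but the concrete mechanism is different from what you anticipate, and it crucially relies on $\CLCD$ being defined via the difference vector $\D(\bm{v})$.
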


A version of \cref{thm:distances}, under the assumption that the entries are i.i.d., was obtained by Rudelson and Vershynin \cite{RV08}. They quantified the amount of additive structure of a vector (in this case, a normal vector of $H_n$) by the Least Common Denominator (LCD). The authors of \cite{RV08} proved a {\em small ball probability} bound for weighted sums of i.i.d. random variables in terms of the LCD of the coefficient vector, and used it to estimate $\cL(\dist(R_n,H_n),\eps)$. However, in our model, the LCD is no longer applicable as the entries are not independent.

In the present paper, we develop a {\em combinatorial} version of the least common denominator and show how it can handle the dependent coordinates.

\begin{defn}[Combinatorial Least Common Denominator]
	For a vector $\bm{v} \in \bR^n$, and parameters $\al, \ga>0$, define
	\begin{equation*}
	\CLCD_{\al,\ga}(\bm{v}):=\inf\Big\{\theta>0\colon \dist(\theta\cdot \D(\bm{v}),\bZ^{\binom{n}{2}}) < \min\big(\ga \norm{\theta \cdot \D(\bm{v})},\al\big) \Big \}.
	\end{equation*}
	Here by $\D(\bm{v})$ we denote the vector in $\bR^{\binom{n}{2}}$ whose $(i,j)$-coordinate is $v_i-v_j$, for $1\le i<j \le n$.	
\end{defn}

{\bf Remark.} The requirement that the distance is smaller than 
$\ga \norm{\theta \cdot \D(\bm{v})}$ forces us to consider only non-trivial integer points as approximations of $\D(\bm{v})$. We will use this definition with $\ga$ a small constant, and for $\al=\mu n$ with a small constant $\mu>0$. The inequality $\dist(\theta\cdot \D(\bm{v}),\bZ^{\binom{n}{2}})<\al$ then yields that most coordinates of $\D(\bm{v})$ are within a small distance from non-zero integers.

One new key ingredient in the proof of \cref{thm:distances} is a small ball probability inequality for certain {\em combinatorial statistics}. Given a vector $\bm{v}=(v_1,\ldots,v_n)\in \bR^n$, consider the random sum
\[
W_{\bm{v}}:=\eta_1 v_1+\ldots+\eta_n v_n,
\]
where $(\eta_1,\ldots,\eta_n)$ is taken uniformly from $\{0,1\}^n$ subject to $\sum_{i=1}^{n}\eta_i=n/2$. We establish the following vital relation between CLCD of $\bm{v}$ and anti-concentration of $W_{\bm{v}}$, whose proof employs the analytic framework of Hal\'asz \cite{Halasz77} together with an estimate, due to Roos \cite{Roos19}, on the characteristic function of a combinatorial statistic. 

\begin{restatable}[Small ball probability]{thm}{binomial}	\label{binomial}	
	For any $b>0$ and $\ga\in (0,1)$ there exists $C>0$ depending only on $b$ and $\ga$ with the following property. Let $\bm{v}\in \bR^n$ such that $\norm{\D(\bm{v})}\ge b\sqrt{n}$. Then for every $\al>0$ and $\eps \ge 0$, we have
	\[
	\cL(W_{\bm{v}},\eps) \le C\eps + \frac{C}{\CLCD_{\al,\ga}(\bm{v})}+Ce^{-2\al^2/n}.
	\]	
\end{restatable}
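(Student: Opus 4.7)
The plan is to follow Halász's Fourier-analytic framework for small-ball probabilities, with two specifically combinatorial adaptations: a pointwise bound on the characteristic function of $W_{\bm v}$ coming from Roos's inequality for statistics of combinatorial type (in place of the product formula $|\phi(t)|=\prod_j|\bE e^{\mathrm{i} t \eta_j v_j}|$ available only under independence), and a splitting of the inverse-Fourier integral indexed by $T := \CLCD_{\al,\ga}(\bm v)$ in place of the usual least common denominator.

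Writing $\phi(t) := \bE e^{\mathrm{i} t W_{\bm v}}$ and setting $\tau := \max(\eps, 1/T)$, I would first apply Esseen's inequality together with monotonicity of $\cL$ to obtain
\[
\cL(W_{\bm v}, \eps) \le \cL(W_{\bm v}, \tau) \le C\tau \int_{-\pi/\tau}^{\pi/\tau} |\phi(t)|\, dt.
\]
Since $\tau \le \eps + 1/T$, this prefactor already accounts for the first two terms of the target bound, so everything reduces to showing that the integral is bounded by a constant plus $T\, e^{-c\al^2/n}$.

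For the pointwise bound on $\phi$, I would invoke Roos's characteristic-function estimate for combinatorial sums to obtain
\[
|\phi(t)| \le \exp\Big(-\tfrac{c}{n}\sum_{i<j}\sin^2(\pi t(v_i - v_j))\Big) \le \exp\Big(-\tfrac{c'}{n}\,\dist(t\D(\bm v),\bZ^{\binom{n}{2}})^2\Big),
\]
the second step using $\sin^2(\pi x) \ge 4\|x\|_{\bR/\bZ}^2$. For $|t|\le T$ the definition of $\CLCD_{\al,\ga}$ forces $\dist(t\D(\bm v),\bZ^{\binom{n}{2}}) \ge \min(\ga |t|\|\D(\bm v)\|,\al)$, and I would split the integration at $t_0 := \al/(\ga\|\D(\bm v)\|)$. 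On $\{|t|\le t_0\}$ the hypothesis $\|\D(\bm v)\|\ge b\sqrt n$ yields a Gaussian-type bound $|\phi(t)|\le \exp(-c'' b^2 t^2)$ whose $L^1$-integral over $\bR$ is a constant depending only on $b$ and $\ga$; multiplied by $\tau$, this contributes $O(\eps + 1/T)$. On the complementary region inside $[-T,T]$ one has the flat bound $|\phi(t)|\le \exp(-c\al^2/n)$, contributing at most $2T e^{-c\al^2/n}$ to the integral; multiplying by $\tau$ and using that the range is truncated at $\pi/\tau$ shows that both regimes $\eps \ge 1/T$ and $\eps < 1/T$ give only $O(e^{-c\al^2/n})$. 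Summing the three contributions yields $\cL(W_{\bm v},\eps)\le C\eps + C/T + Ce^{-c\al^2/n}$; tightening the exponential constant to exactly $2\al^2/n$ requires carrying the sharp constant in Roos's inequality through the calculation.

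The principal obstacle will be the pointwise estimate on $\phi$: because $W_{\bm v}$ is a statistic of a sample \emph{without} replacement, the natural product formula for $|\phi|$ is unavailable and the summands are genuinely dependent. Roos's inequality is precisely what closes this gap, after which the CLCD-driven splitting is essentially Halász-style bookkeeping.
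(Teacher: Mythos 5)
Your proposal follows exactly the paper's strategy: Esséen's lemma, Roos's characteristic-function estimate to handle dependence, conversion from $\cos^2$/$\sin^2$ to $\dist(\cdot,\bZ)^2$, and then a split driven by the CLCD lower bound on the lattice distance. The only structural difference is that the paper first proves a more general two-vector theorem for $W_{\bm a,\bm v}$ (with $\D(\bm a)\otimes\D(\bm v)$ in place of $\D(\bm v)$) and then specializes to $\bm a=(1,\dots,1,0,\dots,0)$, whereas you argue directly in the binomial case — mathematically the same argument, just at a different level of generality.
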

Note that the requirement $\norm{\D(\bm{v})}\ge b\sqrt{n}$ holds for a \textquote{typical} vector $v$. 
Given this theorem, it is then natural to attack \cref{thm:row regular} using the geometric approach of Rudelson and Vershynin \cite{RV08}. However, for the proof of \cref{thm:row regular}, we have to introduce new novel techniques.

\subsection{Organization and notation} 
The paper is organized as follows. We prove \Cref{thm:row regular,thm:distances} in \cref{sec:smallest singular value}, assuming the validity of \cref{binomial}. \cref{sec:small ball probability} is devoted to the proof of \cref{binomial}. We close,
in \cref{sec:concluding remarks}, with some remarks and open problems.

The inner product in $\bR^n$ is denoted $\langle\cdot,\cdot\rangle$, the Euclidean norm is denoted $\norm{\cdot}$. The Euclidean unit ball and sphere in $\bR^n$ are denoted $B_2^n$ and $\bS^{n-1}$, respectively. We generally use lowercase bolded letters for vectors. For a vector $\bm{v}$ we denote by $v_i$ the value of its $i$-th coordinate. Given $S,P \subset \bR^n$, we use the standard notation $N(S,P)$ for the least number of translates of $P$ needed to cover $S$.

Suppose that $A=(a_{ij})_{1\le i \le m, 1 \le j \le n}$ is a $m\times n$ real matrix. The {\em operator norm} of $A$ is defined as $\lVert A\rVert_{\op}:=\sup_{\bm{v}\in \bS^{n-1}}\norm{A\bm{v}}$, while the {\em Hilbert-Schmidt norm} of $A$ is given by $\lVert A\rVert_{\HS}:=\big(\sum_{i,j}a_{ij}^2\big)^{1/2}$.

We write $[n]$ for the set $\{1,2,\ldots,n\}$, while $\binom{X}{k}$ is the family of all $k$-element subsets of a set $X$. 

Throughout the paper we omit floor and ceiling signs where the argument is unaffected. We will employ the following asymptotic notation: $f=O(g)$, $f \lesssim g$, $g\gtrsim f$ all mean that $|f| \le Cg$ for some absolute constant $C>0$, while $f \ll g$ means that $|f| \le cg$ for a sufficiently small constant $c>0$. We will indicate dependence of the implied constant on parameters with subscripts, e.g. $f\lesssim_{\al} g$. We will also use $C, c, c_1,c_2$, etc. to denote unspecified positive constants whose values may be different at each occurrence, and are understood to be absolute if no dependence on parameters is mentioned.

\section{Smallest singular value of random row-regular matrices}
\label{sec:smallest singular value}

This section is devoted to the proofs of \cref{thm:distances,thm:row regular}.
We will follow the general strategy of Rudelson and Vershynin \cite{RV08}, the first step of which is to decompose the unit sphere to a ``structured'' part $\calC(\de,\rho)$ and a ``generic'' one $\cN(\de,\rho)$. We carry out this step in \cref{sec:decomposition} (see \cref{defn:decomposition}). In \cref{sec:concentration} we show that the random variable $\norm{Q_nv}$ satisfies a large deviation inequality (\cref{concentration single vector}). We then use this inequality in \cref{sec:concentration} to achieve an essentially sharp upper bound for the restricted operator norm (see \cref{restricted norm}), and in \cref{sec:almost constant} to get a good uniform lower bound for $\norm{Q_n\bm{v}}$ on the set $\calC(\de,\rho)$ (see \cref{Invertibility constant vectors}). We investigate the invertibility problem for non almost-constant vectors in \cref{sec:non almost constant} using  a special case of our small ball probability estimate. Putting the pieces together, we deliver the proofs of \cref{thm:distances,thm:row regular} in \cref{sec:final hurdle}.

\subsection{Decomposition of the sphere}\label{sec:decomposition}

 We will make use of a partition of the unit sphere $\bS^{n-1}$ into two sets of almost-constant and non almost-constant vectors. These sets were first defined in \cite{LLTTY17, LLTTY19} as follows.

\begin{defn}\label{defn:decomposition}
Fix $\de,\rho\in(0,1)$ whose values will be chosen later. A vector $\bm{v} \in \bS^{n-1}$ is called {\em almost-constant} if one can find $\la \in \bR$ such that there are at least $(1-\de)n$ coordinates $i\in [n]$ satisfying $|v_i - \la|\le \frac{\rho}{\sqrt{n}}$. A vector $\bm{v} \in \bS^{n-1}$ is called {\em non almost-constant} if it is not almost-constant. The sets of almost-constant and non almost-constant vectors will be denoted by $\calC(\de,\rho)$ and $\cN(\de,\rho)$, respectively.
\end{defn}
Remark that $\calC(\de',\rho')\subseteq \calC(\de,\rho)$ if $\de' \le \de$ and $\rho' \le \rho$. 

Using the decomposition $\bS^{n-1}=\calC(\de,\rho)\cup \cN(\de,\rho)$ of the unit sphere, we divide the invertibility problem into two subproblems, for almost-constant and non almost-constant vectors:
\begin{equation*}
\Pr\Big\{s_{\min}(Q_n) \le \frac{\eps}{\sqrt{n}}\Big\} \le \Pr\Big\{\inf_{\bm{v}\in \calC(\de,\rho)} \norm{Q_n \bm{v}}\le \frac{\eps}{\sqrt{n}}\Big\}+\Pr\Big\{\inf_{\bm{v}\in \cN(\de,\rho)}\norm{Q_n\bm{v}}\le \frac{\eps}{\sqrt{n}}\Big \}.
\end{equation*}

We will deal with the former in \cref{sec:almost constant}, and the latter will be treated in \cref{sec:non almost constant}.

The fact that the operator norm of $Q_n$ typically has a higher order of magnitude compared to $\sqrt{n}$  adds some complexity to the proof. To overcome this difficulty, as in \cite{Jain20} we exploit the presence of a \textquote{spectral gap}. Namely we show in \cref{restricted norm} that, while the operator norm of $Q_n$ is $n/2$, the operator norm of $Q_n$ restricted to the hyperplane $\cH:=\{\bm{v}\in \bR^n\colon v_1+\ldots+v_n=0\}$ is $O(\sqrt{n})$ with high probability.

The following result is a version of \cite[Lemma 2.2]{LLTTY19}.

\begin{lem}[Non almost-constant vectors are separated]
	\label{non-constant separated}
	Let $\de,\rho \in(0,1)$. Then for any vector $\bm{v}\in \bS^{n-1} \setminus \calC(\de,\rho)$, there are disjoint subsets $\sigma_1=\sigma_1(\bm{v})$ and $\sigma_2=\sigma_2(\bm{v})$ of $[n]$ such that
	\begin{equation*}
	|\sigma_1|,|\sigma_2|\ge  \de n/8, \quad\mbox{ and } \quad \,\, \frac{\rho}{\sqrt{2n}} \le |v_i-v_j|\le \frac{6}{\sqrt{\de n}} \quad \forall i\in \sigma_1,\,\forall j\in \sigma_2.
	\end{equation*}
\end{lem}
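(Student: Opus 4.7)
The strategy is to first restrict to a subset $J$ of coordinates on which the upper bound $|v_i-v_j|\le 6/\sqrt{\de n}$ is automatic, and then use the non almost-constant property to find two well-separated clusters inside $J$.

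For the upper bound, Markov's inequality applied to $\|\bm{v}\|_2^2=1$ shows that at most $\de n/2$ coordinates satisfy $|v_i|>\sqrt{2/(\de n)}$, so the set
\[
J:=\bigl\{i\in [n]\colon |v_i|\le \sqrt{2/(\de n)}\bigr\}
\]
has size $|J|\ge (1-\de/2)n$, and any two coordinates indexed by $J$ give values differing by at most $2\sqrt{2/(\de n)}<6/\sqrt{\de n}$.

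For the separation, I would sort $\{v_i\colon i\in J\}$ as $u_1\le u_2\le\ldots\le u_m$ and set $k:=\lceil \de n/8\rceil$, $p_1:=u_k$, $p_2:=u_{m-k+1}$. The main claim is that $p_2-p_1\ge 2\rho/\sqrt{n}$. If this fails, then the $m-2k+2$ values $u_k,u_{k+1},\ldots,u_{m-k+1}$ all lie in $[p_1,p_2]\subseteq [\la-\rho/\sqrt{n},\la+\rho/\sqrt{n}]$ for $\la:=(p_1+p_2)/2$. A direct count using $m\ge (1-\de/2)n$ and $2k\le \de n/4+2$ gives $m-2k+2\ge (1-3\de/4)n$, which strictly exceeds $(1-\de)n$ and contradicts the hypothesis $\bm{v}\notin\calC(\de,\rho)$. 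With the claim established, setting
\[
\sigma_1:=\{i\in J\colon v_i\le p_1\},\qquad \sigma_2:=\{i\in J\colon v_i\ge p_2\}
\]
produces two disjoint sets of cardinality at least $\de n/8$, satisfying $|v_i-v_j|\ge p_2-p_1\ge 2\rho/\sqrt{n}\ge \rho/\sqrt{2n}$ for all $i\in\sigma_1$ and $j\in\sigma_2$, with the upper bound inherited from $\sigma_1\cup\sigma_2\subseteq J$.

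The argument is essentially a careful pigeonhole combined with Markov, and I do not expect a genuine obstacle. The only delicate point is aligning numerical constants so that the counting bound $(1-3\de/4)n$ strictly exceeds the non almost-constant threshold $(1-\de)n$; this is precisely why the statement carries the slack constants $\de n/8$ and $6/\sqrt{\de n}$ rather than tighter ones like $\de n/2$ and $2\sqrt{2}/\sqrt{\de n}$.
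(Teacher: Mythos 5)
Your proof is correct. The only substantive difference from the paper is that the paper outsources the separation step to \cite[Lemma 2.2]{LLTTY19}, which already produces disjoint sets $J,Q$ of size $\ge \de n/4$ with $|v_i-v_j|\ge \rho/\sqrt{2n}$, and then simply intersects these with the set $\sigma=\{k:|v_k|\le 3/\sqrt{\de n}\}$ (whose complement has size at most $\de n/9$) to impose the upper bound. You instead give a fully self-contained argument: the same Markov-type truncation for the upper bound, and then a sort-and-pigeonhole argument (take the $k$-th smallest and $k$-th largest values on the truncated index set $J$, and observe that if they were closer than $2\rho/\sqrt{n}$ then too many coordinates would cluster near their midpoint, contradicting $\bm{v}\notin\calC(\de,\rho)$). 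The counting goes through: $m-2k+2\ge (1-3\de/4)n>(1-\de)n$, and the resulting constants $\de n/8$ and $2\rho/\sqrt{n}\ge\rho/\sqrt{2n}$ are at least as good as what the lemma asserts. The self-contained route is arguably preferable for a reader who does not want to chase the external reference; the paper's route is shorter given that \cite{LLTTY19} is already in the bibliography and its Lemma 2.2 is essentially the sorting/pigeonhole argument you reproduce.
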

\begin{proof}
	Consider the subset $\sigma\subseteq [n]$ defined as
	\[
	\sigma:=\Big\{k\colon |v_k| \le \frac{3}{\sqrt{\de n}}\Big\}.
	\]
	As $\norm{\bm{v}}=1$, we must have $|\sigma^{c}| \le \de n/9$.	
	
	Moreover, \cite[Lemma 2.2]{LLTTY19} guarantees the existence of disjoint subsets $J$ and $Q$ of $[n]$ such that
	\begin{equation*}
	|J|,|Q|\ge  \de n/4 \quad\mbox{and } \quad |v_i-v_j|\ge \frac{\rho}{\sqrt{2n}} \quad \forall i\in J,\,\forall j\in Q.
	\end{equation*}
	Put $\sigma_1=J\cap \sigma$ and $\sigma_2=Q\cap \sigma$. It is easy to verify that $\sigma_1$ and $\sigma_2$ possess the desired properties.
\end{proof}

\subsection{Concentration}\label{sec:concentration}

The main result of this section, stated below, shows that for each point $\bm{v}\in \bS^{n-1}$ the random variable $\norm{Q_n\bm{v}}^2$ is well-concentrated around its expectation.

\begin{lem}\label{concentration single vector}
	Let $M$ be a random $m\times n$ matrix, $1\le m\le n$, whose rows are independent random $\{0,1\}$-vectors of exactly $n/2$ zero components. Consider $\bm{v}\in \bS^{n-1}$, and let $r=\norm{v_1+\ldots+v_n}$. Then for every $t\ge 0$, one has
	\[
	\Pr\left\{\norm{\norm{M\bm{v}}^2-\bE\norm{M\bm{v}}^2}\ge t \right\} \le 2 \exp\left[-c_1\min\left(\frac{t^2}{(r^2+1)^2n},\frac{t}{r^2+1}\right)\right]
	\]
	where $c_1>0$ is an absolute constant.
\end{lem}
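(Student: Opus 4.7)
The plan is to exploit the row-independence of $M$ to reduce the problem to a Bernstein-type concentration for a sum of independent sub-exponential random variables. Writing
\[
\|M\bm{v}\|^2 = \sum_{i=1}^m Y_i^2, \qquad Y_i := \product{R_i,\bm{v}},
\]
the $Y_i$ are i.i.d., each distributed as $\sum_{j\in S}v_j$ where $S$ is a uniformly random $(n/2)$-subset of $[n]$, so $\bE Y_i = (v_1+\cdots+v_n)/2$ and in particular $|\bE Y_i| = r/2$.

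The first key step is a sub-gaussian bound for $Y_i-\bE Y_i$ whose parameter depends only on $\|\bm{v}\|_2$ and not on the individual coordinates: I would establish
\[
\Pr\{|Y_i-\bE Y_i|\ge t\}\le 2\exp(-ct^2/\|\bm{v}\|^2)=2\exp(-ct^2),
\]
i.e.\ $\|Y_i-\bE Y_i\|_{\psi_2}=O(1)$. Encoding $R_i$ via a uniformly random permutation $\pi\in S_n$ as $(R_i)_j=\mathbf{1}[\pi(j)\le n/2]$, this is a standard concentration bound for a linear function on the slice $\binom{[n]}{n/2}$; it follows, for example, from Chatterjee's exchangeable-pairs concentration with random transpositions, or equivalently from the log-Sobolev / Poincar\'e inequality on the slice. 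Combined with $|\bE Y_i|=r/2$ this gives $\|Y_i\|_{\psi_2}\lesssim r+1$, and hence
\[
\|Y_i^2-\bE Y_i^2\|_{\psi_1}\lesssim \|Y_i\|_{\psi_2}^2\lesssim r^2+1.
\]

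Having reduced matters to a sum of $m$ independent, mean-zero, sub-exponential random variables, I would apply the Bernstein inequality to $\|M\bm{v}\|^2-\bE\|M\bm{v}\|^2 = \sum_{i=1}^m(Y_i^2-\bE Y_i^2)$ to obtain
\[
\Pr\bigl\{\bigl|\|M\bm{v}\|^2 - \bE\|M\bm{v}\|^2\bigr|\ge t\bigr\}\le 2\exp\!\Bigl[-c_1\min\!\Bigl(\tfrac{t^2}{m(r^2+1)^2},\,\tfrac{t}{r^2+1}\Bigr)\Bigr],
\]
and the stated inequality follows upon bounding $m\le n$ in the denominator.

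The main obstacle is securing the sub-gaussian bound with the sharp coordinate-free scaling $\|Y_i-\bE Y_i\|_{\psi_2}=O(1)$. A naive appeal to Hoeffding's inequality for sampling without replacement would introduce a spurious factor of $\max_j|v_j|^2$ and destroy the clean $r^2+1$ scaling in the Bernstein parameters; what is needed is a concentration estimate that sees only the $\ell_2$-mass of $\bm{v}$. This is precisely what the symmetric-group-based exchangeable-pairs argument (or, equivalently, a log-Sobolev inequality on the slice) supplies, and is the single nontrivial input in the whole proof.
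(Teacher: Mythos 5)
Your proof is correct and structurally identical to the paper's: decompose $\|M\bm{v}\|^2=\sum_{i=1}^m Y_i^2$ with i.i.d.\ $Y_i=\langle R_i,\bm{v}\rangle$, establish $\|Y_i-\bE Y_i\|_{\psi_2}\lesssim 1$ with coordinate-free ($\ell_2$) scaling, deduce $\|Y_i\|_{\psi_2}\lesssim r+1$ and $\|Y_i^2-\bE Y_i^2\|_{\psi_1}\lesssim r^2+1$, and finish with Bernstein together with $m\le n$. The only point of divergence is the tool used to prove the sub-gaussian estimate on the slice, which you rightly single out as the one non-routine ingredient. You propose Chatterjee's exchangeable-pairs method (via random transpositions) or a log-Sobolev/Poincar\'e inequality for the slice; the paper instead invokes a McDiarmid-type bounded-differences inequality for vectors $\bm{\eta}$ uniform on $\{\bm{\eta}\in\{0,1\}^n:\sum_i\eta_i=n/2\}$ (\cref{concentration}, taken from Kwan--Sudakov--Tran \cite{KST19}), which asserts $\Pr\{|f(\bm{\eta})-\bE f(\bm{\eta})|\ge t\}\le 2\exp\!\bigl(-t^2/(8\sum_i d_i^2)\bigr)$ under the coordinate-wise Lipschitz condition $|f(\ldots,1,\ldots)-f(\ldots,0,\ldots)|\le d_i$. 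Applied to $f(\bm{\eta})=\langle\bm{\eta},\bm{v}\rangle$ this gives $d_i=|v_i|$, so $\sum_i d_i^2=\|\bm{v}\|^2=1$ and hence $\|Y_i-\bE Y_i\|_{\psi_2}\lesssim 1$, exactly the $\ell_2$-scaled bound you need. Both routes are of comparable strength and difficulty; your observation that a naive Hoeffding-for-sampling-without-replacement argument would introduce a lossy $n\max_j v_j^2$ factor is accurate, and the paper's bounded-differences lemma, like your proposed alternatives, is designed precisely to avoid it.
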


Our proof of \cref{concentration single vector} makes use of the following inequality, due to Kwan, Sudakov and the author \cite[Lemma 2.1]{KST19}.

\begin{lem}[Combinatorial concentration inequality] \label{concentration}
	Consider $f\colon \{0,1\}^n\rightarrow \bR$ such that
	\[
	\norm{f(x_1,\ldots,x_{i-1},1,x_{i+1},\ldots,x_n)-f(x_1,\ldots,x_{i-1},0,x_{i+1},\ldots,x_n)} \le d_i
	\]
	for all $\bm{x}\in \{0,1\}^n$ and all $i\in [n]$. Then
	\begin{align*}
		\Pr(|f(\bm{\eta})-\bE f(\bm{\eta})|\ge t) &\le 2\exp\left(-\frac{t^2}{8\sum_{i=1}^{n}d_i^2}\right) \quad \text{for all } t \ge 0,
	\end{align*}
where $\bm{\eta}$ is taken uniformly from $\{0,1\}^n$ subject to $\sum_{i=1}^{n}\eta_i=n/2$.	
\end{lem}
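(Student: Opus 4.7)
The plan is to prove this balanced-slice bounded-differences inequality by constructing a Doob martingale, controlling its increments via a swap-coupling, and invoking Azuma-Hoeffding.

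First, I set up the Doob martingale $M_k := \bE[f(\bm{\eta}) \mid \eta_1,\ldots,\eta_k]$ for $k = 0,1,\ldots,n$, so that $M_0 = \bE f(\bm{\eta})$, $M_n = f(\bm{\eta})$, and $f(\bm{\eta}) - \bE f(\bm{\eta}) = \sum_{k=1}^n (M_k - M_{k-1})$. To bound the martingale increments, I condition on $\eta_1,\ldots,\eta_{k-1}$ and compare the laws of $\bm{\eta}$ given $\eta_k = 0$ versus given $\eta_k = 1$. These are uniform on two sub-slices of $\{0,1\}^{n-k}$ whose sums differ by $1$, and they couple naturally as follows: draw $\bm{\eta}^+$ uniformly from the slice with $\eta_k = 1$, select a position $J \in \{k+1,\ldots,n\}$ uniformly among those with $\eta^+_J = 0$, and swap coordinates $k$ and $J$ to obtain $\bm{\eta}^-$; a direct counting check shows $\bm{\eta}^-$ is then uniform on the slice with $\eta_k = 0$. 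Since $\bm{\eta}^+$ and $\bm{\eta}^-$ differ in only the two coordinates $k$ and $J$, the hypothesis yields $|f(\bm{\eta}^+) - f(\bm{\eta}^-)| \le d_k + d_J$. Writing $M_k - M_{k-1} = (\eta_k - p_k)(A_k^+ - A_k^-)$ where $A_k^{\pm} := \bE[f(\bm{\eta}) \mid \mathcal{F}_{k-1}, \eta_k = \pm 1]$ and $p_k := \Pr(\eta_k = 1 \mid \mathcal{F}_{k-1})$, the coupling controls $|A_k^+ - A_k^-|$ and hence the martingale differences.

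To match the stated constant $8$, one needs to verify that the squared increments sum to at most $4 \sum_{i=1}^n d_i^2$. The naive pointwise bound $|M_k - M_{k-1}| \le d_k + \max_{j > k} d_j$ is too crude; the sharper route is to track conditional variances $\bE[(M_k-M_{k-1})^2 \mid \mathcal{F}_{k-1}] \lesssim d_k^2 + \bE[d_J^2 \mid \mathcal{F}_{k-1}]$, sum over $k$, and observe that each $d_j^2$ contributes a bounded total, once as the direct $k=j$ term and once through averaged contributions at earlier steps (the number of earlier steps for which $j$ can play the role of $J$ is reciprocally related to $|A_k|$, causing the averages to telescope). An alternative cleaner implementation is to represent $\bm{\eta}$ via a uniform random permutation $\pi$ of $[n]$ by $\eta_i = \one[\pi(i) \le n/2]$, and to build the martingale by revealing $\pi$ in paired steps (one ``1-slot'' together with one ``0-slot'' per step). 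Each such step then modifies $\bm{\eta}$ by an honest coordinate swap, giving $|M_k - M_{k-1}| \le 2 d_{i_k}$ and $\sum c_k^2 \le 4 \sum d_i^2$ cleanly. Azuma-Hoeffding then yields the claimed $2 \exp(-t^2/(8 \sum d_i^2))$.

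The main obstacle is this bookkeeping step: converting the per-step bound $d_k + d_J$ with $J$ random into an aggregate squared-sum of at most $4 \sum d_i^2$ is the technical heart of the argument and requires careful design either of the coupling (for the $\eta$-revealing martingale) or of the filtration (for the $\pi$-revealing one). Once the right martingale is in place, everything reduces to a standard application of Azuma-Hoeffding.
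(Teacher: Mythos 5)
First, note that the paper does not actually prove this lemma: it is imported verbatim from \cite{KST19} (Lemma 2.1 there), so the only available comparison is with the standard martingale proof of that result. Your outline correctly identifies that strategy --- a Doob martingale over sequentially revealed coordinates, a swap coupling giving the per-step bound $d_k+d_J$, and Azuma--Hoeffding --- and you also correctly identify where the difficulty lies. The problem is that the difficulty is not resolved: both of your sketches for converting the per-step bound into $\sum_k c_k^2\lesssim\sum_i d_i^2$ fail as stated. For the coordinate-revealing martingale, a symmetry computation on the slice gives $\Pr\{J_k=j\mid\mathcal{F}_{k-1}\}=\frac{1}{n-k}$ for every $j>k$, hence
\[
\sum_{k=1}^{n-1}\bE\big[d_{J_k}^2\mid\mathcal{F}_{k-1}\big]=\sum_{k=1}^{n-1}\frac{1}{n-k}\sum_{j>k}d_j^2=\sum_{j=2}^{n}d_j^2\sum_{m=n-j+1}^{n-1}\frac{1}{m}.
\]
The inner weight is $\sim\log n$ for $j=n$, so taking $d_n=1$ and all other $d_i=0$ shows the averages do \emph{not} telescope: the natural bound is $O(\log n)\sum_i d_i^2$, a genuine loss. (Moreover, conditional variances feed into Freedman's inequality, not plain Azuma; for Azuma you need deterministic bounds on the conditional ranges.) Your alternative ``paired-slot'' martingale does not type-check either: the coordinates touched at a given step are random, so $\sum_k d_{i_k}^2$ is not a priori comparable to $\sum_i d_i^2$, and the claimed pointwise bound $|M_k-M_{k-1}|\le 2d_{i_k}$ with deterministic $i_k$ is not justified.

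The first approach can be repaired, and the repair is the missing idea: \emph{reveal the coordinates in decreasing order of $d_i$}. The increment at step $k$ lies a.s.\ in an interval of (deterministic) length $r_k:=d_k+\frac{1}{n-k}\sum_{j>k}d_j$, and by Cauchy--Schwarz $\sum_k r_k^2\le 2\sum_i d_i^2+2\sum_j d_j^2 w_j$ with $w_j:=\sum_{m=n-j+1}^{n-1}\frac1m$. Since $(d_j^2)$ is decreasing, $(w_j)$ is increasing, and $\sum_j w_j=n-1$, Chebyshev's sum inequality gives $\sum_j d_j^2w_j\le\frac{n-1}{n}\sum_j d_j^2$, so $\sum_k r_k^2\le 4\sum_i d_i^2$ and Azuma--Hoeffding yields $2\exp\big(-t^2/(2\sum_i d_i^2)\big)$, which is even stronger than the stated bound. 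As written, however, your proposal defers exactly the step on which the lemma's $\sum_i d_i^2$ dependence (as opposed to $n\max_i d_i^2$ or $\log n\sum_i d_i^2$) hinges, so it is not yet a proof.
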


In order to justify \cref{concentration single vector}, we also require some results about random variables $X$ that satisfy $P(|X|>t)\le 2 \exp(-O(t^{\al}))$ for all $t\ge 0$, where $\al>0$ is fixed. The first is Proposition 2.5.2 in \cite{Vershynin18}.

\begin{lem}\label{lem:equivalent}
Fix $\al>0$. For a random variable $X$, the following properties are equivalent with parameters $K_i>0$ differing from each other by at most an absolute constant factor:
\begin{itemize}
\item[1.] Tails: $\Pr(|X| > t) \le 2\exp\left(-t^{\al}/K_1\right)$ for all $t\ge 0$;
\item[2.] Moments: $\left(\bE|X|^p\right)^{1/p} \le K_2p^{1/\al}$ for all $p\ge 1$.
\end{itemize}
\end{lem}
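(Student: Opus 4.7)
The plan is to prove the two implications separately via standard techniques that parallel Proposition 2.5.2 in \cite{Vershynin18}: layer-cake integration for the tails-to-moments direction, and Markov's inequality with an optimized exponent for the moments-to-tails direction. There is no serious obstacle; the only mildly delicate step is the bookkeeping needed to confirm that the constants $K_1$ and $K_2$ transform into one another up to an absolute multiplicative factor (for $\al$ fixed), so in practice one simply quotes the lemma from \cite{Vershynin18}.

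For $(1)\Rightarrow(2)$, I would begin with the layer-cake identity
\[
\bE|X|^p \;=\; \int_0^\infty p\, s^{p-1}\, \Pr(|X|>s)\, ds,
\]
plug in the assumed tail bound $\Pr(|X|>s)\le 2e^{-s^{\al}/K_1}$, and change variables $v = s^{\al}/K_1$ to obtain
\[
\bE|X|^p \;\le\; \frac{2p}{\al}\, K_1^{p/\al}\, \Gamma(p/\al).
\]
Stirling's estimate $\Gamma(q)\le (Cq)^q$ then yields $(\bE|X|^p)^{1/p}\le K_2\, p^{1/\al}$ with $K_2$ a constant multiple of $K_1^{1/\al}$.

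For $(2)\Rightarrow(1)$, Markov's inequality applied to $|X|^p$ gives
\[
\Pr(|X|>t) \;\le\; t^{-p}\, \bE|X|^p \;\le\; \left(\frac{K_2\, p^{1/\al}}{t}\right)^{p}.
\]
Choosing $p := (t/K_2)^{\al}/e$ (when this quantity is at least $1$) makes the parenthesized ratio equal to $e^{-1/\al}$, and the right-hand side becomes $\exp\!\bigl(-t^{\al}/(\al e K_2^{\al})\bigr)$, matching the desired tail bound with $K_1$ proportional to $K_2^{\al}$. In the complementary regime where the optimal $p$ would fall below $1$, the trivial bound $\Pr(|X|>t)\le 1 \le 2\exp(-t^{\al}/K_1)$ covers the range after enlarging $K_1$ by an absolute constant, completing the proof.
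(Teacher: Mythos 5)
The paper offers no proof of this lemma; it simply cites it as Proposition~2.5.2 of \cite{Vershynin18}. Your argument is precisely the standard textbook proof of that proposition (layer-cake integration plus a Stirling-type bound on the Gamma function for the tails-to-moments direction, Markov's inequality with an optimized exponent for the converse), so in spirit you are reconstructing what the cited reference does rather than taking a different route.

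Two small points of hygiene, neither fatal. First, the bound $\Gamma(q)\le (Cq)^q$ fails as $q\to 0^+$ (the left side diverges while the right side tends to $1$), and when $\al>1$ and $p\in[1,\al)$ you do enter the regime $q=p/\al<1$; there you should instead use the elementary bound $\Gamma(q)\le 1/q$ (valid on $(0,1]$ since $\Gamma(q+1)\le 1$ there), which gives $\bE|X|^p\le 2K_1^{p/\al}$ and hence $(\bE|X|^p)^{1/p}\le 2K_1^{1/\al}\le 2K_1^{1/\al}p^{1/\al}$, so the conclusion still holds. Second, you correctly observe that the relation between the parameters is $K_1\asymp K_2^{\al}$ (equivalently $K_2\asymp K_1^{1/\al}$), not $K_1\asymp K_2$; the paper's phrase \enquote{differing by an absolute constant factor} should be read with $\al$ fixed and with the understanding that the comparison is between $K_1$ and $K_2^{\al}$, consistent with the normalization $\exp(-t^{\al}/K_1)$ used in the statement (note this differs slightly from Vershynin's own normalization $\exp(-t^2/K_1^2)$ in the $\al=2$ case). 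With those clarifications your proof is complete and correct.
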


The above lemma leads us to the following convenient notation.

\begin{defn}[Orlicz norm]
Fix $\al>0$. The $\psi_{\al}$-norm of $X$, denoted $\norm{X}_{\psi_{\al}}$, is defined to be the smallest $K_2$ in the second property of \cref{lem:equivalent}. In other words,
\[
\norm{X}_{\psi_{\al}}:=\sup_{p\ge 1}p^{-1/\al}\left(\bE|X|^p\right)^{1/p}.
\]
\end{defn}

The cases $\al=2$ and $\al=1$ correspond to {\em sub-gaussian} random variables and {\em sub-exponential} random variables, respectively. Sub-gaussian and sub-exponential distributions are closely related. Indeed, inspecting the definitions we quickly see that
\begin{equation}\label{gaussian exponential}
\norm{X}_{\psi_2}^2 \le \norm{X^2}_{\psi_1} \le 2\norm{X}_{\psi_2}^2.
\end{equation}

We recall a concentration inequality for sums of independent sub-exponential random variables (see, e.g., \cite[Theorem 2.8.1]{Vershynin18}).

\begin{thm}[Bernstein's inequality] \label{Bernstein}
	Let $Y_1,\ldots,Y_n$ be independent, mean zero, sub-exponential random variables. Let $K=\max\limits_{i}\norm{Y_i}_{\psi_1}$. Then for every $t\ge 0$, we have
	\[
	\Pr\Big\{\Big|\sum_{i=1}^n Y_i \Big|\ge t\Big \} \le 2 \exp\left[-c_2\min\left(\frac{t^2}{K^2n},\frac{t}{K}\right)\right]
	\]
	where $c_2>0$ is an absolute constant.
\end{thm}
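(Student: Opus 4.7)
The plan is to establish Bernstein's inequality via the classical Chernoff method: bound the moment generating function (MGF) of each $Y_i$ near zero, multiply using independence, apply Markov, and optimise the free parameter $\lambda$. By symmetry (apply the bound to $-Y_i$ and union-bound) it suffices to handle the one-sided estimate $\Pr\{\sum Y_i \ge t\} \le \exp[-c\min(t^2/(K^2n), t/K)]$.

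The first main step is to show that any mean-zero random variable $Y$ with $\|Y\|_{\psi_1}\le K$ satisfies the sub-exponential MGF bound
\[
\bE\exp(\la Y) \le \exp(C_0 K^2 \la^2) \quad\text{for all } |\la|\le \frac{1}{2K},
\]
for some absolute constant $C_0$. To get this I would Taylor-expand the exponential, $\bE e^{\la Y} = 1 + \sum_{p\ge 2} \la^p \bE Y^p / p!$ (the linear term vanishes by mean-zero), bound the $p$th absolute moment via the second equivalence of \cref{lem:equivalent}, $\bE|Y|^p \le (K_2 p)^p$ with $K_2\lesssim K$, and compare with $p!$ using $p!\ge (p/e)^p$ to get $\bE|Y|^p/p! \le (eK_2)^p$. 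Summing a geometric series when $|\la|eK_2 \le 1/2$ yields $\bE e^{\la Y}\le 1 + C_0 K^2 \la^2 \le \exp(C_0 K^2\la^2)$, where I have used $e^x\le 1+2x^2$ after pulling out the lowest-order term.

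Given this, independence makes the second step immediate: $\bE\exp(\la \sum_i Y_i) = \prod_i \bE e^{\la Y_i} \le \exp(C_0 K^2 \la^2 n)$ for $|\la|\le 1/(2K)$. Markov's inequality then gives
\[
\Pr\Big\{\sum_{i=1}^n Y_i \ge t\Big\} \le \exp\bigl(-\la t + C_0 K^2 \la^2 n\bigr).
\]
Optimising yields $\la^* = t/(2C_0 K^2 n)$, producing the sub-gaussian exponent $-t^2/(4C_0 K^2 n)$; but $\la^*$ is only admissible when $\la^*\le 1/(2K)$, i.e.\ when $t\le C_0 K n$. In the opposite regime I would set $\la = 1/(2K)$ instead, giving $-t/(2K) + C_0 n/4 \le -t/(4K)$ whenever $t\ge C_0 K n$. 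Packaging both cases as $\min(t^2/(K^2n), t/K)$ up to an absolute constant $c_2$ produces the stated one-sided bound, and the two-sided version follows by applying the same argument to $-Y_i$.

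The main obstacle is essentially bookkeeping rather than a conceptual hurdle: one must be careful to (i) extract a constant from the $\psi_1$-norm that is uniform in $p$ (using Stirling-type bounds on $p!$), (ii) ensure the geometric series converges, which forces the restriction $|\la|\lesssim 1/K$, and (iii) manage the piecewise optimum in $\la$ cleanly so that both the sub-gaussian regime ($t\lesssim Kn$) and the sub-exponential regime ($t\gtrsim Kn$) are covered by a single expression. Once these are in place, absorbing all multiplicative constants into a single $c_2>0$ completes the proof.
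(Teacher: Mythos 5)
The paper does not actually prove this statement --- it is quoted as a known result from Vershynin's book \cite[Theorem 2.8.1]{Vershynin18}, whose proof is precisely the Chernoff/MGF argument you outline (moment-based MGF bound near zero, independence, Markov, piecewise optimisation of $\la$). Your proposal is correct up to harmless constant bookkeeping (e.g.\ the geometric series actually forces $|\la|\le 1/(2eK)$ rather than $1/(2K)$, and the sum--expectation interchange in the Taylor expansion is justified by the absolute moment bounds), so it reproduces the standard proof of the cited result.
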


We are now ready to prove \cref{concentration single vector}.

\begin{proof}[Proof of \cref{concentration single vector}]
Consider the random sum $X:=\eta_1v_1+\ldots+\eta_nv_n$, where $(\eta_1,\ldots,\eta_n)$ is sampled uniformly from the set of all $\{0,1\}$-vectors of entry sum $n/2$. Then for all $i\in [m]$, the random variables $\product{M\bm{v},\bm{e}_i}$ are i.i.d. copies of $X$. Thus
\[
\norm{M\bm{v}}^2 =X_1^2+\ldots+X_m^2,
\]
where $X_1,\ldots,X_m$ are independent copies of $X$. 

A simple calculation shows
\[
\bE(X^2)=\frac{n-2}{4(n-1)}r^2+ \frac{n}{4(n-1)} \lesssim r^2+1.
\]
	
Moreover, applying \cref{concentration} to the function $\bm{\eta}\mapsto \norm{\product{\bm{\eta},\bm{v}}}$, we get

\begin{equation*}
\norm{X-\bE X}_{\psi_2}\lesssim 1.
\end{equation*}

By the triangle inequality for the $\psi_2$-norm, we obtain
	\begin{align*}
		\norm{X}_{\psi_2} &\le \norm{X-\bE X}_{\psi_2}+\norm{\bE X}_{\psi_2}\\
		&\le \norm{X-\bE X}_{\psi_2}+(\bE X^2)^{1/2} \lesssim 1 + (r^2+1)^{1/2}\lesssim r +1,
	\end{align*}
	where in the second inequality we used the bound $\norm{\bE X}_{\psi_2}=\norm{\bE X} \le (\bE X^2)^{1/2}$. Thus
	\begin{align*}
		\norm{X^2-\bE X^2}_{\psi_1} &\le \norm{X^2}_{\psi_1}+\norm{\bE X^2}_{\psi_1}\\
		&\le 2\norm{X}_{\psi_2}^2+\bE(X^2) \lesssim r^2+1,
	\end{align*}
where the second inequality follows from \eqref{gaussian exponential}. Now, noting that $m\le n$, and applying \cref{Bernstein} to $Y_i=X_i^2-\bE X_i^2$ and $K \lesssim r^2+1$, we obtain
	\[
	\Pr\left\{\norm{\norm{M\bm{v}}^2-\bE\norm{M\bm{v}}^2}\ge t \right\} \le 2 \exp\left[-c\min\left(\frac{t^2}{(r^2+1)^2n},\frac{t}{r^2+1}\right)\right] \quad \text{for all } t\ge 0,
	\]
	where $c>0$ is an absolute constant. This completes our proof.
\end{proof}

\subsection{Restricted operator norm and invertibility on a single vector}\label{sec:restricted}

Using nets along with our concentration inequality (\cref{concentration single vector}), one can show that the operator norm of $Q_n$ restricted to the hyperplane $\cH:=\{\bm{v}\in \bR^n:\sum_{i=1}^{n}v_i=0\}$ is typically $O(\sqrt{n})$.

\begin{prop}[Restricted operator norm]	\label{restricted norm}
There exist constants $C_3>0$ and $c_3>0$ such that the following holds. Let $M$ be a random $m\times n$ matrix, $1\le m\le n$, whose rows are independent random $\{0,1\}$-vectors of exactly $n/2$ zero components. Then for all $t\ge C_3$, we have
	\[
	\Pr\Big\{{\lVert\left. M \right|_{\cH}\rVert}_{\op} \ge t \sqrt{n}\Big \} \le 2 \exp\left(-c_3t^2n\right),
	\]
where $\cH:=\{\bm{v}\in \bR^n:\sum_{i=1}^{n}v_i=0\}$. 	
\end{prop}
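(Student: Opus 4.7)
The plan is a standard net argument, with the punchline being that restricting to $\cH$ kills the "mean" term $r = |v_1 + \cdots + v_n|$ that otherwise makes $\bE\|Q_n \bm{v}\|^2$ of order $n^2$. Once that is gone, \cref{concentration single vector} behaves like an honest sub-gaussian deviation inequality at the scale $\sqrt{n}$, and the exponent beats the entropy of the sphere.

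\textbf{Step 1 (pointwise control on $\cH$).} For any $\bm{v}\in \cH\cap\bS^{n-1}$ we have $r=0$, so the proof of \cref{concentration single vector} gives $\bE\|M\bm{v}\|^{2}= m\cdot \tfrac{n}{4(n-1)}\le n/2$, and the tail bound becomes
\[
\Pr\bigl\{\bigl|\|M\bm{v}\|^{2}-\bE\|M\bm{v}\|^{2}\bigr|\ge s\bigr\}\le 2\exp\!\Bigl(-c_{1}\min\!\bigl(s^{2}/n,\,s\bigr)\Bigr).
\]
Taking $s=t^{2}n-n/2\ge t^{2}n/2$ for $t\ge 1$, one checks $\min(s^{2}/n, s)\ge t^{2}n/4$, so
\[
\Pr\bigl\{\|M\bm{v}\|\ge t\sqrt{n}\bigr\}\le 2\exp\!\bigl(-c_{1}t^{2}n/4\bigr)\qquad (t\ge 1).
\]

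\textbf{Step 2 (net on $\cH\cap\bS^{n-1}$).} The hyperplane $\cH$ has dimension $n-1$, so one can fix a $(1/2)$-net $\cN\subseteq \cH\cap \bS^{n-1}$ with $|\cN|\le 6^{n-1}\le 6^{n}$ (standard volumetric estimate, $N(\bS^{k-1},\tfrac12 B_{2}^{k})\le 6^{k}$). The usual approximation lemma then yields
\[
\bigl\|\,M|_{\cH}\,\bigr\|_{\op}\;\le\;2\,\max_{\bm{v}\in\cN}\|M\bm{v}\|,
\]
because any $\bm{u}\in\cH\cap\bS^{n-1}$ is within $1/2$ of some $\bm{v}\in\cN$, so $\|M\bm{u}\|\le \|M\bm{v}\|+\tfrac12\|M|_{\cH}\|_{\op}$, and taking the sup over $\bm{u}$ and rearranging gives the claim.

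\textbf{Step 3 (union bound).} Combining Steps 1 and 2, for any $t\ge 2$,
\[
\Pr\bigl\{\|M|_{\cH}\|_{\op}\ge t\sqrt{n}\bigr\}\;\le\;\Pr\Bigl\{\max_{\bm{v}\in\cN}\|M\bm{v}\|\ge \tfrac{t}{2}\sqrt{n}\Bigr\}\;\le\; 6^{n}\cdot 2\exp\!\bigl(-c_{1}t^{2}n/16\bigr).
\]
Choosing $C_{3}$ large enough that $c_{1}C_{3}^{2}/16-\log 6\ge c_{1}C_{3}^{2}/32$, we see that for every $t\ge C_{3}$ the exponent is at least $c_{3}t^{2}n$ with $c_{3}:=c_{1}/32$, proving the proposition.

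The only substantive point is Step 1: one must verify that on $\cH$ both the mean $\bE\|M\bm{v}\|^{2}$ and the sub-exponential parameter $K\lesssim r^{2}+1$ from \cref{concentration single vector} collapse to $O(1)$, so that the deviation bound is truly Gaussian at the scale $\sqrt{n}$. Without this "spectral gap" phenomenon the $6^{n}$ union bound cost would swamp the concentration; with it, the argument is a one-line net computation.
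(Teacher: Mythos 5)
Your proof is correct and follows the same strategy as the paper: a $(1/2)$-net on $\cH\cap\bS^{n-1}$, pointwise concentration from \cref{concentration single vector} with $r=0$, and a union bound; the paper merely compresses the approximation step $\|M|_{\cH}\|_{\op}\le 2\max_{\bm{v}\in\cN}\|M\bm{v}\|$ into its union-bound line without spelling it out, and leaves the final constant-chasing implicit. Your explicit handling of both is accurate and changes nothing substantive.
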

\begin{proof}
	Note that
	\[
	{\lVert\left. M \right|_{\cH}\rVert}_{\op}=\sup_{\bm{v}\in \bS^{n-1}\cap \cH}\norm{M\bm{v}}.
	\]
	Let $\cN$ be a $(1/2)$-net of $\bS^{n-1}\cap \cH$ of cardinality at most $6^n$.
	Fix $\bm{v}\in \cN$. Since $v_1+\ldots +v_n=0$, it follows from \cref{concentration single vector} that for $t$ sufficiently large one has
	\[
	\Pr\left\{\norm{M\bm{v}}\ge t\sqrt{n}/2\right\} \le \Pr\left\{\norm{\norm{M\bm{v}}^2-\bE\norm{M\bm{v}}^2}\ge t^2n/8 \right\}\\
	\le 2 e^{-ct^2n},
	\]
	where the first inequality holds since $\bE\norm{M\bm{v}}^2=\frac{mn}{4(n-1)} \le n/2$. Taking the union bound yields
	\begin{align*}
		\Pr\left\{{\lVert\left. M \right|_{\cH}\rVert}_{\op} \ge t \sqrt{n}\right\} \le \norm{\cN}\max_{\bm{v}\in \cN}\Pr\left\{\norm{M\bm{v}}\ge t\sqrt{n}/2\right\} \le 6^n\cdot 2e^{-ct^2n},
	\end{align*}
	which completes the proof.
\end{proof}

\cref{concentration single vector} can also be used to establish the invertibility of the random matrix $Q_n$ on a single vector.

\begin{lem}[Invertibility on a single vector]\label{invertibility single vector}	
There exists a constant $c_4>0$ such that the following holds for fixed $\bm{v}\in \bS^{n-1}$. Let $M$ be a random $m\times n$ matrix, $n/2\le m\le n$, whose rows are independent random $\{0,1\}$-vectors of exactly $n/2$ zero components. Then
	\[
	\Pr\left\{\norm{M\bm{v}}\le \sqrt{n}/5\right\} \le 2e^{-c_4n}.
	\] 
\end{lem}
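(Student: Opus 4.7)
The plan is to apply the concentration inequality \cref{concentration single vector} directly to $\norm{M\bm{v}}^2$ with a carefully chosen threshold $t$. The key observation is that both $\bE\norm{M\bm{v}}^2$ and the gap $\bE\norm{M\bm{v}}^2-n/25$ scale like $(r^2+1)n$, where $r=|v_1+\cdots+v_n|$, and this exactly cancels the $(r^2+1)^2 n$ appearing in the denominator of the tail bound from \cref{concentration single vector}.

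From the computation already carried out in the proof of \cref{concentration single vector} we have
\[
\bE\norm{M\bm{v}}^2 \;=\; m\Bigl[\tfrac{n-2}{4(n-1)}r^2+\tfrac{n}{4(n-1)}\Bigr].
\]
Using $m\ge n/2$, a routine estimate yields an absolute constant $c_0>1/25$ (for instance $c_0=1/12$ works for all sufficiently large $n$) such that $\bE\norm{M\bm{v}}^2 \ge c_0\, n(r^2+1)$. If $\norm{M\bm{v}}\le \sqrt{n}/5$ then $\norm{M\bm{v}}^2\le n/25$, and therefore
\[
\bE\norm{M\bm{v}}^2 - \norm{M\bm{v}}^2 \;\ge\; (c_0-\tfrac{1}{25})\,n(r^2+1) \;=:\; t.
\]

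I would then invoke \cref{concentration single vector} with this value of $t$. The point is that
\[
\frac{t^2}{(r^2+1)^2 n} \;=\; (c_0-\tfrac{1}{25})^2\, n, \qquad \frac{t}{r^2+1} \;=\; (c_0-\tfrac{1}{25})\,n,
\]
so the minimum of these two quantities is $\gtrsim n$ uniformly in $r$. Consequently
\[
\Pr\bigl\{\norm{M\bm{v}}\le \sqrt{n}/5\bigr\}\;\le\;\Pr\bigl\{|\norm{M\bm{v}}^2-\bE\norm{M\bm{v}}^2|\ge t\bigr\}\;\le\;2e^{-c_4 n},
\]
as claimed.

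I do not anticipate any real obstacle. The only non-routine step is verifying that one can take $c_0>1/25$ in the deterministic lower bound $\bE\norm{M\bm{v}}^2 \ge c_0\, n(r^2+1)$, which is exactly what ensures that the gap is itself proportional to $n(r^2+1)$ rather than some smaller quantity depending on $r$. Once this matching scaling is in place, the bound given by \cref{concentration single vector} becomes genuinely exponential in $n$, independently of how large or small $r$ happens to be.
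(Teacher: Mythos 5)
Your proposal is correct and follows essentially the same route as the paper: both establish the deterministic lower bound $\bE\norm{M\bm{v}}^2 \ge \tfrac19(r^2+1)n$ (so $c_0=1/9$ works) and then apply \cref{concentration single vector} with a threshold $t$ proportional to $(r^2+1)n$, which makes the $(r^2+1)$-dependence cancel in the exponent and yields a bound $2e^{-c_4 n}$ uniformly in $r$. The paper's choice is $t=\tfrac{1}{18}(r^2+1)n$ while you take $t=(c_0-\tfrac{1}{25})(r^2+1)n$; these differ only in the unspecified absolute constant.
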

\begin{proof}
	 Let $r=\norm{v_1+\ldots+v_n}$.
	For $n \ge 10$ and for $m\ge n/2$, we have
	\begin{align*}
		\bE\norm{M\bm{v}}^2=\frac{m(n-2)}{4(n-1)}r^2+\frac{mn}{4(n-1)}\ge \frac19 (r^2+1)n.
	\end{align*}
	Applying \cref{concentration single vector} to $t=\frac{1}{18}(r^2+1)n$, we thus get
	\[
		\Pr\left\{\norm{M\bm{v}}\le \sqrt{n}/5\right\} \le \Pr\left\{\norm{\norm{M\bm{v}}^2-\bE\norm{M\bm{v}}^2}\ge t \right\}\le 2e^{-cn}. \qedhere
	\]
\end{proof}

{\bf Remark.} One can prove \cref{invertibility single vector} in a more direct way. Indeed, let us consider the random sum $X:=\eta_1v_1+\ldots+\eta_nv_n$, where $(\eta_1,\ldots,\eta_n)$ is taken uniformly at random from the set of all $\{0,1\}$-vectors of entry sum $n/2$. It is not difficult to derive from \cref{concentration} that $\Pr(\norm{X} \le c) \le 1-c$ for some constant $c>0$. The conclusion of \cref{invertibility single vector} then follows from a tensorization lemma (see \cref{tensorization}).

\subsection{Invertibility for almost-constant vectors}\label{sec:almost constant}

Here we study the invertibility problem for almost-constant vectors. The following is the main result.

\begin{prop}[Invertibility for almost-constant vectors] \label{Invertibility constant vectors}
There exist constants $\de,\rho,c_5 \in (0,1)$ such that the following holds. Let $M$ be an $m\times n$ random matrix, $n/2 \le m\le n$, whose rows are independent random $\{0,1\}$-vectors of exactly $n/2$ zero components. Then 
	\[
	\Pr\big\{ \inf_{\bm{v}\in \calC(\de,\rho)}\norm{M\bm{v}} \le \sqrt{n}/10\big\} \le 2 e^{-c_5n}.
	\]
\end{prop}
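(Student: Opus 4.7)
The plan is the classical net + invertibility-on-a-single-vector argument: construct a small net $\cN$ for $\calC(\delta,\rho)$, apply \cref{invertibility single vector} pointwise on $\cN$, and take a union bound. The main obstacle to a naive implementation is that $\|M\|_{\op}=n/2$ is of much larger order than the target scale $\sqrt{n}/10$, so an unrefined $\varepsilon$-net on $\bS^{n-1}$ would need mesh $\varepsilon\lesssim n^{-3/2}$ and hence cardinality $e^{\Omega(n\log n)}$. I circumvent this using two structural features of $M$: the deterministic identity $M\bm{1}_n=(n/2)\bm{1}_m$ (each row has exactly $n/2$ ones) and the spectral-gap estimate $\lVert M|_{\cH}\rVert_{\op}\lesssim \sqrt{n}$ from \cref{restricted norm}.

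Concretely, I orthogonally decompose every unit vector as $\bm{v}=\mu\bm{1}_n+\bm{w}$, where $\mu=n^{-1}\sum_i v_i$ (so $|\mu|\le 1/\sqrt{n}$) and $\bm{w}\in\cH$, giving $M\bm{v}=(\mu n/2)\bm{1}_m+M\bm{w}$. For $\bm{v}\in\calC(\delta,\rho)$ with almost-constant value $\lambda$ on some $\sigma\subseteq[n]$, $|\sigma^c|\le \delta n$, a short computation from $\|\bm{v}\|=1$ first gives $|\lambda|\lesssim 1/\sqrt{n}$ and then $|\mu-\lambda|\lesssim (\rho+\sqrt{\delta})/\sqrt{n}$; consequently $|w_i|\lesssim \rho/\sqrt{n}$ for $i\in\sigma$ provided $\delta\le\rho^2$, while $\bm{w}|_{\sigma^c}$ lies in a Euclidean ball of radius $O(1)$ in $\bR^{|\sigma^c|}$.

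To build the net, I discretise $\mu\in[-1/\sqrt{n},1/\sqrt{n}]$ on a grid of mesh $\asymp 1/n$ (giving $O(\sqrt{n})$ points). For each of the $\binom{n}{\delta n}\le(e/\delta)^{\delta n}$ choices of $\sigma$, I cover $\bm{w}|_{\sigma^c}$ by a standard Euclidean $\kappa$-net of size $(C/\kappa)^{\delta n}$ where $\kappa$ is a small absolute constant chosen in terms of $C_3$ from \cref{restricted norm}, and I cover $\bm{w}|_\sigma$ by a single point (valid since that cube has $L^2$-diameter $\lesssim \rho\ll\kappa$); a mild correction keeps the net point inside $\cH$. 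The resulting net $\cN$ has cardinality $|\cN|\le O(\sqrt{n})\cdot(C/(\delta\kappa))^{\delta n}$, which I make at most $\exp(c_4 n/2)$ (with $c_4$ the constant from \cref{invertibility single vector}) by taking $\delta$ and then $\rho$ to be sufficiently small absolute constants.

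Finally I condition on the intersection of the event $\{\lVert M|_{\cH}\rVert_{\op}\le C_3\sqrt{n}\}$ (from \cref{restricted norm}, probability $\ge 1-2e^{-c_3 n}$) with $\{\|M\bm{v}'\|>(\sqrt{n}/5)\|\bm{v}'\|$ for every $\bm{v}'\in\cN\}$ (from \cref{invertibility single vector} applied to each unit-normalisation plus a union bound, probability $\ge 1-2e^{-c_4 n/2}$). For $\bm{v}\in \calC(\delta,\rho)$ with nearest net point $\bm{v}'=\mu'\bm{1}_n+\bm{w}'$, the mesh choices ensure $\|\bm{v}'\|\ge 3/4$ and
\[
\|M(\bm{v}-\bm{v}')\| \le |\mu-\mu'|(n/2)\sqrt{m}+\lVert M|_{\cH}\rVert_{\op}\,\|\bm{w}-\bm{w}'\| \le \sqrt{n}/20,
\]
so $\|M\bm{v}\|\ge \|M\bm{v}'\|-\sqrt{n}/20\ge (3/4)(\sqrt{n}/5)-\sqrt{n}/20 = \sqrt{n}/10$, as required. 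The main difficulty, addressed by this decomposition and the spectral-gap estimate, is exactly the mismatch between the operator norm of $M$ and the target scale $\sqrt{n}/10$.
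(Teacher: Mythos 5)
Your proof is correct and reaches the stated conclusion, but by a somewhat different route than the paper's. The paper factors the argument through a reusable "rounding" lemma (\cref{rounding}), which appends to a coarse net $\cF$ a small family $\cY$ of staircase vectors $(\be s/\sqrt{n},\ldots,\be s/\sqrt{n},0,\ldots,0)$ chosen to make the residual $\bm{1}$-component $\product{\bm{v}-\bm{w},\bm{1}}$ tiny, and then bounds the error in terms of both ${\lVert\left.A\right|_{\cH}\rVert}_{\op}$ and $\lVert A\rVert_{\op}$. You instead orthogonally decompose each unit vector as $\bm{v}=\mu\bm{1}_n+\bm{w}$ with $\bm{w}\in\cH$, discretise $\mu$ and $\bm{w}$ separately, and use the exact identity $M\bm{1}_n=(n/2)\bm{1}_m$ for the $\bm{1}$-direction rather than the coarse bound $\lVert M\rVert_{\op}\le n$; the $\cH$-direction is handled by \cref{restricted norm} as in the paper. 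For this proposition your decomposition is a cleaner, slightly sharper implementation of the same net-plus-union-bound strategy (the paper's rounding lemma earns its keep by also being reused for level sets in \cref{discretization level sets}). Two small points you should spell out: (i) the "mild correction" to put a net point in $\cH$ can be done by orthogonal projection, and since $\bm{w}\in\cH$ the projection at most doubles the approximation error, so the net remains deterministic and of the same cardinality up to constants; and (ii) the constraint $\de\le\rho^2$ (used to get $|w_i|\lesssim\rho/\sqrt{n}$ on $\sigma$) forces $\rho$ to be chosen before $\de$, not after — the correct order is $\kappa$ as a function of $C_3$, then $\rho\ll\kappa$, then $\de\le\rho^2$ small enough that the net cardinality is beaten by $e^{-c_4 n}$.
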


We will construct a small $\eps$-net $\cN$ on $\calC(\de,\rho)$ with respect to the pseudometric $d(\bm{x},\bm{y}):=\norm{M(\bm{x}-\bm{y})}$. The invertibility of the random matrix $M$ over a single vector $\bm{w}\in \cN$ will follows from \cref{invertibility single vector}. Then, by a union bound, the invertibility will hold for each point in the net $\cN$. By approximation, we will extend the invertibility to the whole $\calC(\de,\rho)$. 

To exploit the fact that ${\lVert\left.Q_n \right|_{\cH}\rVert}_{\op}=O(\sqrt{n})$, we will use the following simple result, whose proof borrows some ideas of Jain \cite{Jain20}.

\begin{lem}[Rounding]\label{rounding}
Fix $\be>0$, and consider any $S\subset \bS^{n-1}$. There exists a (deterministic) net	$\cN \subset S+2\be B_2^n$ of cardinality at most $(2n+2) \cdot N(S,\be B_2^n)$ such that for every $m\in \bN$ and for every (deterministic) $m\times n$ matrix $A$, the following holds: for every $\bm{v}\in S$ one can find $\bm{w}\in \cN$ so that
\[
\norm{A(\bm{v}-\bm{w})} \le \be \left( 2 {\lVert\left.A \right|_{\cH}\rVert}_{\op}+\frac{\lVert A\rVert_{\op}}{n}\right).
\]
\end{lem}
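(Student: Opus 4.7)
The plan is to exploit that $\cH^{\perp}=\spn(\one)$ is one-dimensional and to construct $\cN$ by augmenting a standard $\beta$-net of $S$ with a fine shift along $\one$. Start from any $\beta$-net $\cN_0\subset S$ of cardinality at most $N(S,\beta B_2^n)$. For each $\bm{v}\in S$, pick $\bm{w}_0\in\cN_0$ with $\norm{\bm{v}-\bm{w}_0}\le\beta$, and write the orthogonal decomposition
\[
\bm{v}-\bm{w}_0 \;=\; \bm{u}_{\cH} + \tau\one, \qquad \tau \;=\; \tfrac{1}{n}\sum_{i=1}^{n}(v_i-w_{0,i}),
\]
so that $\bm{u}_{\cH}\in\cH$ with $\norm{\bm{u}_{\cH}}\le\beta$ and $|\tau|\le\beta/\sqrt{n}$. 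A direct approximation $\bm{v}\mapsto\bm{w}_0$ would pay up to $\beta\lVert A\rVert_{\op}$ through the $\tau\one$ term, which is far too expensive (since $\lVert A\rVert_{\op}$ can be of order $n$); the remedy is to absorb $\tau$ into the net by shifting $\bm{w}_0$ by an integer multiple of a suitably small $\delta\one$.

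Set $\delta:=2\beta/n^{3/2}$ and define
\[
\cN \;:=\; \bigl\{\bm{w}_0 + k\delta\one : \bm{w}_0\in\cN_0,\ k\in\bZ,\ |k|\le n\bigr\}.
\]
Then $|\cN|\le(2n+1)\,N(S,\beta B_2^n)\le(2n+2)\,N(S,\beta B_2^n)$, and $\norm{k\delta\one}\le n\delta\sqrt{n}=2\beta$, so $\cN\subset S+2\beta B_2^n$. Given $\bm{v}$, choose $k$ to be the nearest integer to $\tau/\delta$; since $|\tau|/\delta\le n/2$, this yields $|k|\le n$ and $|\tau-k\delta|\le\delta/2$, and we let $\bm{w}:=\bm{w}_0+k\delta\one\in\cN$.

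Finally, splitting $A(\bm{v}-\bm{w})=A\bm{u}_{\cH}+(\tau-k\delta)A\one$ and using $\norm{A\one}\le\sqrt{n}\,\lVert A\rVert_{\op}$ gives
\[
\norm{A(\bm{v}-\bm{w})} \;\le\; {\lVert\left.A\right|_{\cH}\rVert}_{\op}\norm{\bm{u}_{\cH}} + |\tau-k\delta|\cdot\norm{A\one} \;\le\; \beta\,{\lVert\left.A\right|_{\cH}\rVert}_{\op} + \frac{\beta}{n}\lVert A\rVert_{\op},
\]
which already implies the stated inequality (the factor $2$ on ${\lVert\left.A\right|_{\cH}\rVert}_{\op}$ is harmless slack). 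No substantive obstacle arises; the only real design choice is the precision $\delta\asymp\beta/n^{3/2}$, which is the unique scaling that keeps the per-base grid linear in $n$ while ensuring the $\one$-direction contributes only $O(\beta\lVert A\rVert_{\op}/n)$ to the error.
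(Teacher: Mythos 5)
Your proposal is correct and follows essentially the same approach as the paper: augment a standard $\beta$-net of $S$ with $O(n)$ one-dimensional shifts so that the residual's $\one$-component becomes tiny, then split the error into a $\cH$-part paid by ${\lVert\left.A\right|_{\cH}\rVert}_{\op}$ and a $\one$-part paid by $\lVert A\rVert_{\op}/n$. The only cosmetic difference is your choice of grid — multiples of $\delta\one$ rather than the paper's vectors $(\beta s/\sqrt{n},\ldots,\beta s/\sqrt{n},0,\ldots,0)$ — which is actually cleaner since the shift stays in $\spn(\one)$ and thus avoids the paper's extra factor $2$; just note that $\cN_0$ need not be taken inside $S$ (a set realizing $N(S,\beta B_2^n)$ may have centers outside $S$), but the argument only uses $\norm{\bm{v}-\bm{w}_0}\le\beta$, so nothing changes.
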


\begin{proof}
Let $\cF \subset \bR^n$ be a set such that $S\subset \cF+\be B_2^n$ and $\norm{\cF}=N(S,\be B_2^n)$. Consider the $(2n+2)$-element subset of $\frac{\be}{\sqrt{n}}\bZ^n$ defined as
\[
\cY:=\big\{\big(\be s/\sqrt{n},\ldots,\be s/\sqrt{n},0,\ldots,0\big)\colon s=\pm 1\big\}.
\]

We will show that the net $\cN:=\cF+\cY$ has the desired properties. Indeed, we have
\[
\norm{\cN}\le \norm{\cY}\norm{\cF}=(2n+2)\norm{\cF}.
\]
Fix a vector $\bm{v} \in S$. It follows from the definition of $\cF$ that $\norm{\bm{v}-\bm{x}} \le \be$ for some $\bm{x}\in \cF$.
Let $k:=\norm{\product{\bm{v}-\bm{x},\bm{1}}}\sqrt{n}/\be$, $s:=\sgn(\product{\bm{v}-\bm{x},\bm{1}})$, and

\begin{equation*}
\bm{y}:=(\underbrace{\be s/\sqrt{n},\ldots,\be s/\sqrt{n}}_\text{$\floor{k}$},0,\ldots,0) \in \cY.
\end{equation*}	

Note that $\bm{y}$ is well-defined since, by the Cauchy-Schwarz inequality, we have
\begin{align*}
k=\norm{\product{\bm{v}-\bm{x},\bm{1}}}\sqrt{n}/\be \le \norm{\bm{v}-\bm{x}}n/\be \le n. 
\end{align*}
Let $\bm{w}:=\bm{x}+\bm{y}$. Then $\bm{w}\in \cN$. Moreover,
by the triangle inequality we obtain
\[
\norm{\bm{v}-\bm{w}}\le \norm{\bm{v}-\bm{x}}+\norm{\bm{y}}\le \be +\be=2\be,
\]
and whence $\bm{w} \in \bm{v}+2\be B_2^n$. This implies 
\[
\cN \subset S+2\be B_2^n.
\]

We can infer from the definition of $\bm{y}$ that
\[
\product{\bm{v}-\bm{w},\bm{1}}=\product{\bm{v}-\bm{x},\bm{1}}-\product{\bm{y},\bm{1}}\\
=\frac{s\be k}{\sqrt{n}}-\frac{s\be \floor{k}}{\sqrt{n}}\in \Big[\frac{-\be}{\sqrt{n}},\frac{\be}{\sqrt{n}}\Big].
\]

Finally, writing $\bm{v}-\bm{w}=\Proj_{\cH}(\bm{v}-\bm{w})+\frac{\product{\bm{v}-\bm{w},\bm{1}}}{n}\bm{1}$, 
we see that
\begin{align*}
\norm{A(\bm{v}-\bm{w})} &\le \norm{A(\Proj_{\cH}(\bm{v}-\bm{w}))}+\frac{\norm{\product{\bm{v}-\bm{w},\bm{1}}}}{n}\norm{A \bm{1}}\\
&\le {\lVert\left.A \right|_{\cH}\rVert}_{\op}\cdot \norm{\bm{v}-\bm{w}}+\frac{\be}{n^{3/2}}\cdot \norm{A}_{\op}\cdot\sqrt{n}\\
&\le 2\be {\lVert\left.A \right|_{\cH}\rVert}_{\op}+\frac{\be \lVert A\rVert_{\op}}{n}.
\end{align*}

This completes our proof.
\end{proof}

We next employ \cref{rounding} to discretize the set of almost-constant vectors.

\begin{lem}[Discretization of almost-constant vectors] \label{discretization constant}
Let $\de,\rho \in (0,\frac{1}{12})$, and $n$ is sufficiently large with respect to $\de$ and $\rho$. Then there is a net $\cN \subset \bR^n$ of cardinality at most $e^{2\de \log (5/\de)n}$ such that 
\begin{itemize}
	\item For any $\bm{w}\in \cN$, we have $1/2 \le \norm{\bm{w}}\le 3/2$;
	\item For any $\bm{v}\in \calC(\de,\rho)$ there is $\bm{w}\in \cN$ so that for any deterministic $m\times n$ matrix $A$ we have
	\begin{equation}\label{eq:almost-constnat nets}
	\norm{A(\bm{v}-\bm{w})} \le (\de+2\rho)\left(2 {\lVert\left.A \right|_{\cH}\rVert}_{\op}+\frac{\lVert A\rVert_{\op}}{n}\right). 
	\end{equation}
\end{itemize}

\end{lem}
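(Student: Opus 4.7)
The plan is to apply \cref{rounding} with $S := \calC(\de,\rho)$ and $\be := \de + 2\rho$. Since $\be\cdot(2\lVert A|_{\cH}\rVert_{\op}+\lVert A\rVert_{\op}/n) = (\de+2\rho)(2\lVert A|_{\cH}\rVert_{\op}+\lVert A\rVert_{\op}/n)$, this delivers the approximation estimate demanded by the lemma, and reduces the task to constructing a Euclidean $\be$-net $\cF$ of $\calC(\de,\rho)$ of cardinality at most $e^{2\de\log(5/\de)n}/(2n+2)$. The norm bound $1/2\le\norm{\bm{w}}\le 3/2$ for $\bm{w}\in\cN$ is then automatic: \cref{rounding} places $\cN$ inside $\calC(\de,\rho)+2\be B_2^n\subseteq \bS^{n-1}+2\be B_2^n$, and $2\be = 2(\de+2\rho)\le 1/2$ under the hypothesis $\de,\rho\in(0,1/12)$.

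To produce such a $\be$-net, I will exploit the almost-constant structure. Given $\bm{v}\in\calC(\de,\rho)$, fix $\la\in\bR$ and $I\subseteq[n]$ with $|I|\ge(1-\de)n$ and $|v_i-\la|\le\rho/\sqrt n$ for $i\in I$; the unit-norm constraint forces $|\la|\lesssim 1/\sqrt n$. I then form candidate approximations $\bm{w}' = \la'\bm{1} + \bm{u}'_J$, where $\la'$ runs over a $(\rho/\sqrt n)$-grid of $[-2/\sqrt n,2/\sqrt n]$ (giving $O(1/\rho)$ choices), the set $J$ runs over $\binom{[n]}{\de n}$ (giving at most $(e/\de)^{\de n}$ choices, with $J$ playing the role of $I^c$ padded to cardinality exactly $\de n$), and $\bm{u}'_J$ is supported on $J$ and runs over a standard Euclidean $\de$-net of the ball of radius $2$ in $\bR^J$ (giving at most $(6/\de)^{\de n}$ choices).

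A short triangle-inequality computation checks the $\be$-approximation. Split $\bm{v}-\bm{w}'$ into its restrictions to $J^c$ and $J$. On $J^c\subseteq I$ each coordinate obeys $|v_i-\la'|\le|v_i-\la|+|\la-\la'|\le 2\rho/\sqrt n$, so the $\ell^2$ contribution from $J^c$ is at most $2\rho$. On $J$, the vector $\bm{v}_J-\la'\bm{1}_J$ has Euclidean norm bounded by $\norm{\bm{v}_J}+|\la'|\sqrt{\de n}\le 2$, so it is covered within $\de$ by our net, and the contribution from $J$ is at most $\de$. Altogether $\norm{\bm{v}-\bm{w}'}\le\de+2\rho=\be$, confirming the net property.

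The counting step is the one place requiring a little numerical care. Multiplying the three ingredients above and the factor $(2n+2)$ from \cref{rounding}, and taking logarithms, yields an upper bound of the form $\de n\log(6e/\de^2)+O(\log n)+\log(1/\rho)$, to be compared with the target $2\de n\log(5/\de)=\de n\log(25/\de^2)$. Since $6e<25$, there is a positive linear-in-$n$ slack $\de n\log(25/(6e))$, which dominates the lower-order terms $O(\log n)+\log(1/\rho)$ once $n$ is sufficiently large in terms of $\de$ and $\rho$. I don't anticipate any genuine obstacle; the main delicacy is simply making sure $J$ is chosen to contain $I^c$, so that the small-coordinate estimate $|v_i-\la|\le\rho/\sqrt n$ is actually available on $J^c$.
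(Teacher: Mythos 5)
Your proof is correct and follows essentially the same route as the paper: set $\be=\de+2\rho$, construct a $\be$-net of $\calC(\de,\rho)$ by discretizing the level $\la$ and the $\le\de n$ exceptional coordinates, then feed this net into \cref{rounding} to obtain $\cN\subset\calC(\de,\rho)+2\be B_2^n$, from which both the norm bounds and the approximation estimate \eqref{eq:almost-constnat nets} follow directly. The only (immaterial) variations are that you confine $\la$ to $[-O(1/\sqrt n),O(1/\sqrt n)]$ (correct, since near-constancy of an $(1-\de)n$-fraction of the coordinates of a unit vector forces $|\la|\lesssim 1/\sqrt n$) whereas the paper more lavishly grids $[-1,1]$, and that you cover the exceptional block by a Euclidean $\de$-net of a ball rather than by a lattice $\sqrt{\de/n}\,\bZ^{\sigma^c}$; both choices land at the same exponential counting rate $\de n\log(O(1)/\de^2)$, comfortably below the target $2\de n\log(5/\de)$.
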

\begin{proof}
For a vector $\bm{v}\in \bR^n$ and a set $I\subseteq [n]$, we let $\bm{v}_I$ denote the vector $(v_i)_{i\in I}$. 

Fix $\bm{v}\in \calC(\de,\rho)$. Then there exist a real number $\la$ and an index set $\sigma\subset [n]$ with $\norm{\sigma}=(1-\de)n$ such that $\norm{v_i-\la} \le \frac{\rho}{\sqrt{n}}$ for all $i\in \sigma$. To discretize the range of $\la$, we observe that any $\la \in [-1,1]$ can be approximated by some 

\begin{equation}\label{eq:lambda}
\la_0 \in [-1,1]\cap \frac{\rho}{\sqrt{n}}\bZ
\end{equation}\label{eq:constant part}
 in the sense that $\norm{\la - \la_0} \le \frac{\rho}{\sqrt{n}}$. This clearly forces
\begin{equation*}
\norm{v_i-\la_0} \le \frac{2\rho}{\sqrt{n}} \quad \text{for all } i\in \sigma.
\end{equation*}

We can capture $\bm{v}_{\sigma^c}$ by quantizing its coordinates uniformly with step $\sqrt{\frac{\de}{n}}$. Thus there is $\tbu \in \sqrt{\frac{\de}{n}}\bZ^{\sigma^c}$ with $\lVert\bm{v}_{\sigma^c}-\tbu\rVert_{\infty} \le \sqrt{\frac{\de}{n}}$. Since $\norm{\bm{v}}=1$, we find
\begin{equation}\label{eq:tbu}
\norm{\tbu} \le \norm{\bm{v}_{\sigma^c}}+\norm{\bm{v}_{\sigma^c}-\tbu} \le \norm{\bm{v}}+\sqrt{\de n}\lVert\bm{v}_{\sigma^c}-\tbu\rVert_{\infty} \le 1+\de \le 3/2.
\end{equation}

Define a vector $\bm{u}\in \bR^n$ by setting $\bm{u}_{\sigma}=(\la_0,\ldots,\la_0)$ and $\bm{u}_{\sigma^c}=\tbu$. It follows from \eqref{eq:lambda} and \eqref{eq:tbu} that $\bm{u}\in \cF$, where
\begin{align}\label{initial net}
\cF:=\bigcup_{\norm{\sigma}=(1-\de)n}\Big\{\la_0\bm{1}_{\sigma}\colon \la_0\in [-1,1]\cap \frac{\rho}{\sqrt{n}}\bZ\Big\}\oplus \Big\{\tbu \in \sqrt{\frac{\de}{n}}\bZ^{\sigma^c}: \norm{\tbu} \le 3/2\Big\},
\end{align}
the union being over all $(1-\de)n$-element subsets $\sigma$ of $[n]$.

From the definition of $\tbu$ we have
\[
\norm{\bm{v}-\bm{u}} \le \sqrt{\de n}\lVert\bm{v}_{\sigma^c}-\tbu\rVert_{\infty}+\sqrt{n}\max_{i\in \sigma}\norm{v_i-\la_0} \le \de+2\rho.
\]
Letting $\be:=\de + 2\rho \in (0,\frac14)$, we thus get $\calC(\de,\rho)\subset \cF + \be B_2^n$, and so $N(\calC(\de,\rho),\be B_2^n) \le \norm{\cF}$. By applying \cref{rounding}, we obtain a net $\cN \subset \calC(\de,\rho)+2\be B_2^n \subset \frac32 B_2^n\setminus \frac12 B_2^n$ of cardinality  at most $(2n+2)\norm{\cF}$ having property \eqref{eq:almost-constnat nets}.

It remains to estimate the cardinality of $\cN$. Observe that there are $\binom{n}{\de n}\le (\frac{e}{\de})^{\de n}$ ways to choose the subset $\sigma$ in \eqref{initial net}. Clearly, there are at most $1+2\sqrt{n}/\rho$ possibilities for $\la_0$ in \eqref{initial net}. Furthermore, a known volumetric argument shows that there are at most $\left(\frac{5}{\de}\right)^{\de n}$ choices for $\tbu$ in \eqref{initial net}. Therefore, we have
\[
\norm{\cN} \le (2n+2)\norm{\cF}\le (2n+2)\cdot \left(\frac{e}{\de}\right)^{\de n}\cdot (1+2\sqrt{n}/\rho) \cdot \left(\frac{5}{\de}\right)^{\de n} \le e^{2\de \log (5/\de)n}
\]
for $n$ sufficiently large. This completes our proof.
\end{proof}

We are now ready to prove \cref{Invertibility constant vectors}.

\begin{proof}[Proof of \cref{Invertibility constant vectors}]
By \cref{restricted norm}, there is a constant $K\ge 1$ such that 
\[
\Pr\{{\lVert\left. M\right|_{\cH}\rVert}_{\op}>K\sqrt{n}\} \le e^{-n}.
\]
Take $\de=\rho=1/(100K)$. To complete the proof, it suffices to find a constant $c>0$ so that the event
\[
\cE:=\Big\{\inf_{\bm{v}\in \calC(\de,\rho)}\norm{M\bm{v}}\le \sqrt{n}/10 \enskip \text{and} \enskip {\lVert\left.M \right|_{\cH}\rVert}_{\op}\le K\sqrt{n}\Big \}
\]
has probability at most $2e^{-cn}$.

To this end, let $\cN$ be the net constructed in \cref{discretization constant}. By \cref{invertibility single vector}, for each $\bm{w}\in \cN$ we have
\[
\Pr\big \{\norm{M\bm{w}} \le \sqrt{n}/5\big \} \le 2e^{-c_4n}.
\]
Then taking the union bound, we obtain
\begin{equation}\label{eq:constant invertibility nets}
\Pr\big\{\inf_{\bm{w}\in \cN}\norm{M\bm{w}}\le \sqrt{n}/5\big \} \le e^{2\de \log (5/\de)n}\cdot  2e^{-c_4n} \le 2e^{-c_4n/2}.
\end{equation}

We are now in a position to bound the event $\cE$. Suppose that $\cE$ occurs, then ${\lVert\left.M \right|_{\cH}\rVert}_{\op}\le K\sqrt{n}$ and $\norm{M\bm{v}} \le \sqrt{n}/10$ for some $\bm{v}\in \calC(\de,\rho)$. We learn from the choice of $\cN$ that there is $\bm{w}\in \cN$ with 
\[
\norm{M(\bm{v}-\bm{w})} \le (\de+2\rho)\left( 2 {\lVert\left.M \right|_{\cH}\rVert}_{\op}+\frac{\lVert M\rVert_{\op}}{n}\right).
\]
Since ${\lVert\left.M \right|_{\cH}\rVert}_{\op}\le K\sqrt{n}$ and $\lVert M\rVert_{\op}\le n$, we have
\[
\norm{M\bm{w}} \le \norm{M\bm{v}}+\norm{M(\bm{v}-\bm{w})} \le \sqrt{n}/10+ (\de+2\rho)(2K\sqrt{n}+1) \le \sqrt{n}/5
\]
for $\de=\rho=1/(100K)$. By \eqref{eq:constant invertibility nets}, this completes our proof.	
\end{proof}

\subsection{Invertibility for non almost-constant vectors}
\label{sec:non almost constant}

In this section, we study the invertibility problem for non almost-constant vectors. The following is the main result.

\begin{prop}[Random normal] \label{random normal}
	There exist constants $\mu,\ga,c_6 \in (0,1)$ such that for $n$ sufficiently large one has	
	\[
	\Pr\big\{\exists \bm{v}\in \cN(\de,\rho) \enskip \text{with} \enskip Q_n'\bm{v}=0 \enskip \text{and} \enskip \CLCD_{\mu n,\ga}(\bm{v})\le e^{c_6 n}\big \}\le 2^{-n}.
	\]
\end{prop}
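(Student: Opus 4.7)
The strategy adapts Rudelson--Vershynin's epsilon-net technique to the combinatorial setting: decompose the set of candidate vectors by CLCD level, construct a small net at each level, and combine the row-wise small-ball bound \cref{binomial} with tensorization to control each net point. Condition on the event $\{\lVert Q_n'|_\cH\rVert_\op \le K\sqrt{n}\}$, which holds with probability $\ge 1 - e^{-n}$ by \cref{restricted norm} for a suitable constant $K$, and split $[1, e^{c_6 n}]$ into $O(n)$ dyadic levels $D = 2^k$; set $S_D := \{\bm v \in \cN(\de,\rho) \cap \bS^{n-1} : \CLCD_{\mu n, \ga}(\bm v) \in [D, 2D)\}$. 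For $\bm v \in S_D$, witness-parameters $\theta \in [D, 2D)$ and $\bm p \in \bZ^{\binom{n}{2}} \setminus \{0\}$ with $\lVert \theta \D(\bm v) - \bm p\rVert \le \mu n$ exist; projecting $\bm p$ onto the $(n-1)$-dimensional subspace of consistent difference vectors gives $\bm p^* = \theta \D(\bm u)$ for some $\bm u \in \bR^n$ with $\lVert \D(\bm v) - \D(\bm u)\rVert \le \mu n/D$, and fixing the constant of $\bm u$ so that $\bar{\bm u} = \bar{\bm v}$ makes $\bm v - \bm u \in \cH$ with $\lVert \bm v - \bm u\rVert = \lVert \D(\bm v) - \D(\bm u)\rVert/\sqrt{n} \le \mu\sqrt{n}/D$. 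Enumerating integer vectors $\bm p^*$ corresponding to $\bm w_\cH \in \cH$ of norm $\lesssim 1$ (volumetric count $\lesssim (CD/\sqrt{n})^{n-1}$), with $O(\log D)$ dyadic choices of $\theta$ and $O(D)$ discretization of $\bar{\bm v}$ at mesh $1/(nD)$, yields a net $\cN_D \subset \bR^n$ of cardinality $\lvert \cN_D\rvert \le (CD/\sqrt{n})^n \cdot \mathrm{poly}(n)$, and each $\bm v \in S_D$ admits $\bm w \in \cN_D$ with $\lVert \bm v - \bm w\rVert \le \mu\sqrt{n}/D$ and $\lvert \bar{\bm v} - \bar{\bm w}\rvert \le 1/(nD)$. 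A brief perturbation argument then transfers the CLCD and the lower bound $\lVert \D(\cdot)\rVert \ge b\sqrt{n}$ (from \cref{non-constant separated}) from $\bm v$ to $\bm w$, after mildly relaxing the parameters $\al, \ga$.

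For each fixed $\bm w \in \cN_D$, \cref{binomial} gives $\cL(R_i \cdot \bm w, \eps) \le C\eps + C/D + Ce^{-c\mu^2 n}$ for every row $R_i$ of $Q_n'$; choosing $c_6 < c\mu^2/2$ makes the exponential tail dominated by $C/D$ for $D \le e^{c_6 n}$. The standard tensorization lemma then yields
\begin{equation*}
\Pr\bigl(\lVert Q_n' \bm w\rVert_2 \le \lambda\sqrt{n-1}\bigr) \le (C\lambda + C/D)^{n-1}.
\end{equation*}
If $\bm v \in S_D$ satisfies $Q_n' \bm v = 0$, then $Q_n' \bm w = -Q_n'(\bm v - \bm w)$. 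Splitting $\bm v - \bm w = (\bm v - \bm w)_\cH + (\bar{\bm v} - \bar{\bm w})\one$ and bounding the first term via the restricted operator norm ($\lVert Q_n'|_\cH\rVert_\op \le K\sqrt{n}$, contribution $\le K\mu n/D$) and the second via $Q_n' \one = (n/2)\one$ (contribution $\le \sqrt{n}/(2D)$) gives $\lVert Q_n' \bm w\rVert_2 \lesssim K\mu n/D$. Setting $\lambda \asymp K\mu\sqrt{n}/D$ in the tensorization produces the per-net-point bound $(CK\mu\sqrt{n}/D)^{n-1}$.

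Multiplying by $\lvert \cN_D\rvert$,
\begin{equation*}
\Pr\bigl(\exists \bm v \in S_D : Q_n' \bm v = 0\bigr) \lesssim \mathrm{poly}(n) \cdot (CD/\sqrt{n})^n \cdot (CK\mu\sqrt{n}/D)^{n-1} = \mathrm{poly}(n) \cdot (C^2 K\mu)^{n-1} \cdot D/\sqrt{n}.
\end{equation*}
Choosing $\mu$ small enough that $C^2 K\mu \cdot e^{c_6} \le 1/4$ makes the right-hand side at most $2^{-2n}$ uniformly in $D \le e^{c_6 n}$; summing over the $O(n)$ dyadic levels yields the required bound $2^{-n}$.

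The main obstacle is the tight balancing of three competing scales: the net resolution $\mu\sqrt{n}/D$ (forced by the CLCD parameter $\al = \mu n$), the restricted operator norm $K\sqrt{n}$ from \cref{restricted norm}, and the small-ball exponential tail $e^{-c\mu^2 n}$ in \cref{binomial}. The constant $\mu$ must be chosen small enough that $C^2 K\mu \ll 1$, so the net size $(CD/\sqrt{n})^n$ is beaten by the tensorized small-ball probability, yet strictly positive so that $e^{-c\mu^2 n}$ is dominated by $1/D$ for $D \le e^{c_6 n}$. Separating the $\one$-direction of $\bm v - \bm w$ from its $\cH$-part is essential, since the rows of $Q_n'$ satisfy $R_i \cdot \one = n/2$ (so $Q_n' \one$ is far from zero); the restricted operator norm of \cref{restricted norm} is precisely what makes the $(\bm v - \bm w)_\cH$ contribution controllable.
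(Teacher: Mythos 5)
Your high-level plan matches the paper's: dyadic level sets of CLCD, a net on each level whose resolution scales like $\mu\sqrt{n}/D$, the combinatorial small-ball bound (\cref{binomial}) tensorized over rows, separation of the $\one$-direction from the $\cH$-direction using the restricted operator norm, and a sum over $O(n)$ levels. The handling of the $\one$-direction, the balancing of $c_6$ against $\mu^2$ so the $e^{-2\mu^2 n}$ tail is dominated by $1/D$, and the final choice of $\mu$ to beat the net size are all in the right spirit.

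However, the net-counting step has a genuine gap. You obtain $\bm p^{*}$ by \emph{orthogonally projecting} $\bm p\in\bZ^{\binom{n}{2}}$ onto the $(n-1)$-dimensional subspace $V=\mathrm{range}(\D)$ and then claim the possibilities for $\bm p^{*}$ in a ball of radius $\sim D\sqrt{n}$ number $\lesssim (CD/\sqrt{n})^{n-1}$. But $\Proj_V(\bZ^{\binom{n}{2}})$ is not a subset of $\bZ^{\binom{n}{2}}$; it is a lattice in $V$ whose covolume is $1/\det\bigl(\bZ^{\binom{n}{2}}\cap V^\perp\bigr)$. Here $V^\perp$ is the cycle space of $K_n$, and its integral cycle lattice has covolume $\sqrt{\tau(K_n)}=n^{(n-2)/2}$, so the projected lattice has covolume $n^{-(n-2)/2}$, which is super-exponentially small. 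Consequently the number of projected lattice points in a ball of radius $R\asymp D\sqrt{n}$ is of order $n^{(n-2)/2}(cD)^{n-1}$, exceeding your claim by a factor $\sim n^{(2n-3)/2}$. This extra factor cannot be absorbed by choosing $\mu$ small (it would force $\mu\lesssim n^{-1}$, contradicting $\mu$ being a constant), so the union bound breaks. The paper circumvents this with a pivot trick: by the pigeonhole principle one finds a coordinate $j$ with $|v_j|\le\sqrt{2/n}$ and $\|T\bm v^{(j)}-\bm p^{(j)}\|\le 2\mu\sqrt n$, where $\bm p^{(j)}=(p_{1j},\ldots,p_{jn})\in\bZ^{n-1}$ is a genuine integer sub-vector (a \emph{coordinate} projection, covolume $1$); counting these in a ball of radius $7H$ gives $(1+21H/\sqrt n)^n$, which is what you need. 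You should replace the orthogonal projection by this pivot-coordinate extraction. A secondary, fixable issue: the discretization of $\theta\in[D,2D)$ requires mesh $\sim\mu$ (about $D/\mu$ points), not merely $O(\log D)$ dyadic points — the net point is $\bm p^{(j)}/T_0$, and perturbing $\theta$ by $\Delta$ moves it by $\sim|\bm p^{(j)}|\Delta/D^2\sim\Delta/D$, so you need $\Delta\lesssim\mu\sqrt{n}$. This contributes a factor $\lesssim D$, harmless after the final choice of $\mu$ and $c_6$.
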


In \cref{sec:CLCD} we establish some properties of CLCD which are necessary for the proof of \cref{random normal}. The proof is then given in \cref{sec:Level sets}.  

\subsubsection{Properties of CLCD}\label{sec:CLCD}

A crucial property of the CLCD which will allow us to discretize the range of possible realizations of random normals, is {\em approximately stability of $\CLCD$ with respect to small perturbations}.

\begin{lem}[Stability of CLCD]\label{stable-CLCD}
	Consider a vector $\bm{v}\in \bR^n$, 
	and parameters $\al>0,\ga \in (0,1)$. 
	Then for any $\bm{w}\in\bR^n$ with $|\bm{v}-\bm{w}|<\frac{\ga \norm{\D(\bm{v})}}{5\sqrt{n}}$, we have
	\[
	\CLCD_{\al/2,\ga/2}(\bm{w}) \ge \min\Big\{\CLCD_{\al,\ga}(\bm{v}),\frac{\al}{4\sqrt{n}\norm{\bm{v}-\bm{w}}
	}\Big \}.
	\]
\end{lem}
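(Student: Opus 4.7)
My plan is a short perturbation argument built around the fact that $\D$ is linear and satisfies the identity $\norm{\D(\bm{x})}^2=n\norm{\bm{x}}^2-\big(\sum_i x_i\big)^2\le n\norm{\bm{x}}^2$; hence $\norm{\D(\bm{x})}\le\sqrt{n}\,\norm{\bm{x}}$. Applied to $\bm{x}=\bm{v}-\bm{w}$, this yields the key transfer estimate
\[
\norm{\theta\D(\bm{v})-\theta\D(\bm{w})}\le\theta\sqrt{n}\,\norm{\bm{v}-\bm{w}},
\]
which converts any good integer approximation of $\theta\D(\bm{w})$ into one of $\theta\D(\bm{v})$; the desired contradiction will then be that $\theta$ was chosen below $\CLCD_{\al,\ga}(\bm{v})$.

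Set $\theta_0:=\min\big\{\CLCD_{\al,\ga}(\bm{v}),\,\al/(4\sqrt{n}\,\norm{\bm{v}-\bm{w}})\big\}$, fix $\theta\in(0,\theta_0)$, and suppose for contradiction there were some $\bm{p}\in\bZ^{\binom{n}{2}}$ with
\[
\norm{\theta\D(\bm{w})-\bm{p}}<\min\Big(\tfrac{\ga}{2}\norm{\theta\D(\bm{w})},\,\tfrac{\al}{2}\Big).
\]
The hypothesis $\norm{\bm{v}-\bm{w}}<\ga\norm{\D(\bm{v})}/(5\sqrt{n})$ combined with the transfer estimate delivers the two working inequalities
\[
\theta\sqrt{n}\,\norm{\bm{v}-\bm{w}}<\tfrac{\ga}{5}\norm{\theta\D(\bm{v})}\qquad\text{and}\qquad\norm{\theta\D(\bm{w})}\le\big(1+\tfrac{\ga}{5}\big)\norm{\theta\D(\bm{v})},
\]
while the second half of $\theta<\theta_0$ yields the companion bound $\theta\sqrt{n}\,\norm{\bm{v}-\bm{w}}<\al/4$.

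Two triangle inequalities now exhibit $\bm{p}$ as an $(\al,\ga)$-approximation of $\theta\D(\bm{v})$. Using $\ga<1$, the first bound gives
\[
\norm{\theta\D(\bm{v})-\bm{p}}\le\tfrac{\ga}{2}\norm{\theta\D(\bm{w})}+\theta\sqrt{n}\,\norm{\bm{v}-\bm{w}}\le\tfrac{\ga}{2}\cdot\tfrac{6}{5}\norm{\theta\D(\bm{v})}+\tfrac{\ga}{5}\norm{\theta\D(\bm{v})}=\tfrac{4\ga}{5}\norm{\theta\D(\bm{v})}<\ga\norm{\theta\D(\bm{v})},
\]
and the second gives $\norm{\theta\D(\bm{v})-\bm{p}}\le\al/2+\al/4<\al$. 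Together these force $\theta\ge\CLCD_{\al,\ga}(\bm{v})$, contradicting $\theta<\theta_0$. Since no $\theta<\theta_0$ is admissible in the definition of $\CLCD_{\al/2,\ga/2}(\bm{w})$, passing to the supremum delivers the claimed lower bound $\CLCD_{\al/2,\ga/2}(\bm{w})\ge\theta_0$.

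The only real subtlety---no deeper obstruction arises---is the bookkeeping of two slack factors: the tightening $\ga\to\ga/2$ is absorbed by the decomposition $\tfrac{3\ga}{5}+\tfrac{\ga}{5}<\ga$, and $\al\to\al/2$ by $\tfrac{\al}{2}+\tfrac{\al}{4}<\al$; both fit comfortably within the $1/5$ and $1/4$ margins chosen in the statement, and together they dictate the precise constants that appear there.
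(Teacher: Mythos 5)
Your proof is correct and follows essentially the same route as the paper: both proofs hinge on the bound $\norm{\D(\bm{x})}\le\sqrt{n}\norm{\bm{x}}$ and the triangle inequality to transfer integer approximations between $\theta\D(\bm{w})$ and $\theta\D(\bm{v})$. You phrase the argument as a contradiction (assuming a good integer approximation of $\theta\D(\bm{w})$ exists), while the paper argues directly by lower-bounding $\dist(\theta\D(\bm{w}),\bZ^{\binom{n}{2}})$ — these are contrapositive formulations of the same estimate, with the same slack budget.
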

\begin{proof} Note that $\norm{D(\bm{x})} \le \sqrt{n}\norm{x}$ for every $\bm{x}\in \bR^n$. By our assumptions on $\norm{\bm{v}-\bm{w}}$ and $\ga$, we get
\[
\norm{\D(\bm{v})-\D(\bm{w})}=\norm{\D(\bm{v}-\bm{w})} \le \sqrt{n}\norm{\bm{v}-\bm{w}}\le \sqrt{n}\cdot \frac{\ga \norm{\D(\bm{v})}}{5\sqrt{n}}<\norm{D(\bm{v})}/5,
\]
and hence
\[
\norm{\D(\bm{v})} \le 5\norm{\D(\bm{w})}/4.
\]
	Let $H:=\min\big\{\CLCD_{\al,\ga}(\bm{v}),\frac{\al}{4\sqrt{n}\norm{\bm{v}-\bm{w}}
	}\big \}$. For any $\theta \in (0,H)$, the definition of CLCD yields
	\[
	\dist\big(\theta\cdot \D(\bm{v}),\bZ^{\binom{n}{2}}\big)\ge \min\big(\ga \theta \norm{\D(\bm{v})}, \al\big).
	\]
	
From this it follows that	
	\begin{align*}
	\dist\big(\theta\cdot \D(\bm{w}),\bZ^{\binom{n}{2}}\big) &\ge \dist\big(\theta\cdot \D(\bm{v}),\bZ^{\binom{n}{2}}\big)-\norm{\theta \cdot \D(\bm{v}-\bm{w})}\\
	&\ge \min\big(\ga \theta \norm{\D(\bm{v})}, \al\big) - \theta \norm{\D(\bm{v}-\bm{w})}\\
	&\ge \min\big(\ga \theta \norm{\D(\bm{w})}, \al\big)-(1+\ga)\theta\norm{\D(\bm{v}-\bm{w})}\\
	&\ge \min\big(\ga \theta \norm{\D(\bm{w})},\al)-2\theta \sqrt{n}\norm{\bm{v}-\bm{w}}\ge \tfrac12 \min\big(\ga \theta \norm{\D(\bm{w})}, \al\big), 
	\end{align*}
where the last step holds since $\theta< \frac{\al}{4\sqrt{n}\norm{\bm{v}-\bm{w}}}$ and $4\sqrt{n}\norm{\bm{v}-\bm{w}} \le \tfrac45 \ga \norm{\D(\bm{v})} \le \ga \norm{\D(\bm{w})}$. By definition of CLCD, this gives
	\[
	\CLCD_{\al/2,\ga/2}(\bm{w}) \ge \theta. 
	\]
Since $\theta \in (0,H)$ was arbitrary, it follows that $\CLCD_{\al/2,\ga/2}(\bm{w}) \ge H$, which proves the lemma.
\end{proof}

We will also need a simple result that the CLCD of any non almost-constant vector in $\bS^{n-1}$ is $\gtrsim \sqrt{n}$.

\begin{lem}[Non almost-constant vectors have large CLCD] \label{clcd non-constant}
	Let $\de,\rho \in (0,1)$, and fix $\bm{v}\in \cN(\de,\rho)$. Then for every $\al>0$ and every $\ga$ with $0<\ga<\frac{1}{12}\de\rho$, we have
	\[
	\CLCD_{\al,\ga}(\bm{v}) \ge \tfrac{1}{7}\sqrt{\de n}.
	\]
\end{lem}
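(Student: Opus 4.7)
The plan is to show directly from the definition that no $\theta\in(0,\tfrac{1}{7}\sqrt{\de n}]$ can lie in the set whose infimum defines $\CLCD_{\al,\ga}(\bm{v})$. Since $\al$ appears only as an upper cap inside the $\min$, it suffices to establish the stronger bound $\dist(\theta\cdot\D(\bm{v}),\bZ^{\binom{n}{2}})\ge \ga\theta\norm{\D(\bm{v})}$ for every such $\theta$; this automatically dominates $\min(\ga\theta\norm{\D(\bm{v})},\al)$. The structure of non almost-constant vectors will be extracted from \cref{non-constant separated}, and the smallness of $\theta$ will be used to keep every individual coordinate $\theta(v_i-v_j)$ bounded away from $1$, so that its distance to $\bZ$ can be read off easily.

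Concretely, I would first invoke \cref{non-constant separated} to obtain disjoint $\sigma_1,\sigma_2\subset[n]$ with $|\sigma_1|,|\sigma_2|\ge \de n/8$ and $\rho/\sqrt{2n}\le |v_i-v_j|\le 6/\sqrt{\de n}$ for every $(i,j)\in\sigma_1\times\sigma_2$. Fix any $\theta\in (0,\tfrac{1}{7}\sqrt{\de n}]$ and any such pair $(i,j)$. The upper bound gives $|\theta(v_i-v_j)|\le 6/7<1$, so the nearest integer to $\theta(v_i-v_j)$ is $0$ or $\pm 1$. A two-case split then yields the pointwise inequality $\dist(\theta(v_i-v_j),\bZ)\ge \theta\rho/\sqrt{2n}$: if $|\theta(v_i-v_j)|\le 1/2$ the distance equals $|\theta(v_i-v_j)|$ which is $\ge \theta\rho/\sqrt{2n}$ by the lower separation bound, while if $|\theta(v_i-v_j)|>1/2$ the distance equals $1-|\theta(v_i-v_j)|\ge 1/7$, which still exceeds $\theta\rho/\sqrt{2n}\le \rho\sqrt{\de}/(7\sqrt{2})<1/7$.

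Restricting the sum defining $\dist(\theta\cdot\D(\bm{v}),\bZ^{\binom{n}{2}})^2$ to pairs in $\sigma_1\times\sigma_2$ and using $|\sigma_1||\sigma_2|\ge \de^2 n^2/64$ then gives
\begin{equation*}
\dist(\theta\cdot\D(\bm{v}),\bZ^{\binom{n}{2}})^2 \ge \tfrac{\de^2 n^2}{64}\cdot \tfrac{\theta^2\rho^2}{2n}=\tfrac{\de^2\rho^2\theta^2 n}{128},
\end{equation*}
so $\dist(\theta\cdot\D(\bm{v}),\bZ^{\binom{n}{2}})\ge \tfrac{\de\rho}{8\sqrt{2}}\,\theta\sqrt{n}$. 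Combining this with the trivial estimate $\norm{\D(\bm{v})}\le \sqrt{n}$ (valid for $\bm{v}\in\bS^{n-1}$) and the numerical inequality $\ga<\de\rho/12<\de\rho/(8\sqrt{2})$ (since $8\sqrt{2}<12$) produces $\dist(\theta\cdot\D(\bm{v}),\bZ^{\binom{n}{2}})\ge \ga\theta\norm{\D(\bm{v})}$, which is what we needed. Hence $\CLCD_{\al,\ga}(\bm{v})\ge \tfrac{1}{7}\sqrt{\de n}$.

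There is no serious obstacle; the one spot that demands care is the case $|\theta(v_i-v_j)|\in(1/2,6/7]$, where the nearest integer is $\pm 1$ rather than $0$. Handling this is precisely what forces the threshold $\tfrac{1}{7}\sqrt{\de n}$: it keeps $6\theta/\sqrt{\de n}$ strictly below $1$ and leaves enough slack to absorb $\theta\rho/\sqrt{2n}$ into $1-6/7$.
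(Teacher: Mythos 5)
Your proof is correct, and it takes a genuinely different route from the paper's. You argue directly from the definition: for every $\theta\le\tfrac17\sqrt{\de n}$ you establish the \emph{pointwise} bound $\dist(\theta(v_i-v_j),\bZ)\ge\theta\rho/\sqrt{2n}$ on each of the $|\sigma_1||\sigma_2|\ge\de^2n^2/64$ separated pairs supplied by \cref{non-constant separated}, then sum the squares to show $\dist(\theta\cdot\D(\bm{v}),\bZ^{\binom{n}{2}})\ge\tfrac{\de\rho}{8\sqrt2}\theta\sqrt{n}>\ga\theta\norm{\D(\bm{v})}$, so such $\theta$ lies outside the set whose infimum defines $\CLCD$. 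The paper instead sets $H=\CLCD_{\al,\ga}(\bm{v})$, takes the near-witness $\bm{p}\in\bZ^{\binom{n}{2}}$ from the defining inequality, applies Chebyshev to find that all but fewer than $\de^2 n^2/64$ pairs satisfy $|v_i-v_j-p_{ij}/H|<8\ga/(\de\sqrt{n})$, and then pigeonholes a \emph{single} pair in $\sigma'\cap\sigma''$ where $p_{ij}\ne0$ and $|p_{ij}/H|<7/\sqrt{\de n}$, yielding $H\ge\tfrac17\sqrt{\de n}$. Your argument is arguably more elementary (no Chebyshev, no pigeonhole on a witness) and directly shows the defining inequality fails for every small $\theta$, which sidesteps any concern about whether the infimum is attained; the paper's argument is a bit shorter because it only needs to examine one pair. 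The case split around $|\theta(v_i-v_j)|\in(1/2,6/7]$ that you flag is indeed the crux, and your numerics ($6/7<1$, $\rho\sqrt{\de}/(7\sqrt2)<1/7$, and $8\sqrt2<12$ so $\ga<\de\rho/12<\de\rho/(8\sqrt2)$) all check out, as does the use of $\norm{\D(\bm{v})}\le\sqrt{n}$ for $\bm{v}\in\bS^{n-1}$.
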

\begin{proof}
By \cref{non-constant separated}, there is a subset $\sigma'\subseteq \binom{[n]}{2}$ of cardinality
	\begin{equation*}
	|\sigma'| \ge \tfrac{1}{64}\de^2 n^2
	\end{equation*}
	and such that
	\begin{equation}\label{lower}
	\frac{\rho}{\sqrt{2n}} \le |v_i-v_j|\le \frac{6}{\sqrt{\de n}} \quad \text{for every} \enskip \{i,j\}\in \sigma'. 
	\end{equation}
	
	Let $H=\CLCD_{\al,\ga}(\bm{v})$. By definition of CLCD, one can find a vector $\bm{p}=(p_{ij})_{i<j}\in \bZ^{\binom{n}{2}}$ with
	\[
	\norm{H\cdot \D(\bm{v})-\bm{p}} < \ga H\norm{\D(\bm{v})}.
	\]
	Dividing by $H$ yields
	\[
	\norm{\D(\bm{v})-\frac{\bm{p}}{H}} < \ga \norm{\D(\bm{v})} \le \ga \sqrt{n}.
	\]
	Then by Chebyshev inequality, there exists a subset $\sigma''\subseteq \binom{[n]}{2}$ of cardinality
	\begin{align*}
	|\sigma''| >\binom{n}{2}-\tfrac{1}{64}\de^2n^2
	\end{align*}
	and such that
	\begin{equation}\label{upper}
	\norm{v_i-v_j-\frac{p_{ij}}{H}}<\frac{8\ga}{\de \sqrt{n}} \quad \text{for } \{i,j\}\in \sigma''.
	\end{equation}
	As $|\sigma'|+|\sigma''|>\binom{n}{2}$, there is $\{i,j\} \in \sigma'\cap \sigma''$. Fix this pair $\{i,j\}$. It follows from \eqref{lower}, \eqref{upper} and our assumption on $\ga$ that
	\[
	\norm{\frac{p_{ij}}{H}} \ge \norm{v_i-v_j}-\norm{v_i-v_j-\frac{p_{ij}}{H}}\ge  \frac{\rho}{\sqrt{2n}}-\frac{8\ga}{\de \sqrt{n}}>0,
	\]
	which shows $p_{ij}\ne 0$. Similarly, we have
	\[
	\norm{\frac{p_{ij}}{H}} \le \norm{v_i-v_j}+\norm{v_i-v_j-\frac{p_{ij}}{H}}\le \frac{6}{\sqrt{\de n}}+\frac{8\ga}{\de \sqrt{n}}<\frac{7}{\sqrt{\de n}}.
	\]
	This implies $H \ge \frac17 \norm{p_{ij}}\sqrt{\de n} \ge \frac17 \sqrt{\de n}$, completing our proof.
\end{proof}

\subsubsection{Level sets}\label{sec:Level sets}

In this section, we partition $\bS^{n-1}\setminus \calC(\de,\rho)$ into {\em level sets} collecting unit vectors having comparable CLCD. To show that with a high probability the normal vector does not belong to a level set with a small CLCD, we construct an approximating set whose cardinality is well controlled from above. Since CLCD is stable with respect to small perturbations, the event that the normal vector has a small CLCD is contained in the event that one of the vectors in the approximating set has a small CLCD. We then apply the small ball probability estimate for individual vectors, combined with the union bound, to show that the latter event has probability close to zero.

Unless stated otherwise, we will assume throughout this section that $\de,\rho, \mu$ and $\ga$ are constants with
\begin{equation}\label{eq:assumption}
0< \de, \rho \ll 1, \quad 0<\mu \ll_{\de,\rho} \ga \ll_{\de,\rho} 1.
\end{equation}
Let $H_0:=\frac17\sqrt{\de n}$. By \cref{clcd non-constant},
\[
\CLCD_{\al,\ga}(\bm{v}) \ge H_0 \quad \text{for every} \enskip \bm{v}\in \cN(\de,\rho).
\]

\begin{defn}[Level sets of CLCD] \label{defn: level sets}
Let $H\ge H_0/2$. We define the level set $S_H \subseteq \bS^{n-1}$ as
\[
S_H:=\{\bm{v}\in \cN(\de,\rho) \colon H\le \CLCD_{\mu n,\ga}(\bm{v})\le 2H\}.
\]
\end{defn}

We recall the following \textquote{tensorization} lemma
of Rudelson and Vershynin \cite[Lemma 2.2]{RV08}.

\begin{lem}[Tensorization lemma]\label{tensorization}
	Suppose that $\eps_0\in(0,1)$, $B\ge 1$, and let $X_1,\dots,X_m$ be independent random variables such that each $X_i$ satisfies 
	\[
	\Pr\{|X_i|\le \eps\}\leq B\eps\quad\mbox{for all }\eps\ge\eps_0.
	\]
	Then
	\[
	\Pr\left\{\norm{(X_1,X_2,\ldots,X_m)} \le \eps \sqrt{m} \right\}\le (CB\eps)^m \quad \text{for every } \eps\ge\eps_0,
	\]
	where $C>0$ is an universal constant.
\end{lem}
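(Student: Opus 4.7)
The plan is to follow the standard Laplace transform argument. Since $\|(X_1,\ldots,X_m)\|\le \eps\sqrt{m}$ is the event $X_1^2+\cdots+X_m^2 \le \eps^2 m$, and the elementary inequality $\mathbf{1}_{\{y\le 1\}} \le e^{\,1-y}$ holds for $y\ge 0$, I would set $y=(X_1^2+\cdots+X_m^2)/(\eps^2 m)$ and apply Markov:
\[
\Pr\big\{\|(X_1,\ldots,X_m)\|\le \eps\sqrt{m}\big\} \;\le\; e^{m}\,\bE\, e^{-(X_1^2+\cdots+X_m^2)/\eps^2} \;=\; e^m \prod_{i=1}^m \bE\, e^{-X_i^2/\eps^2},
\]
where independence was used in the last step. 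It now suffices to prove the one-dimensional bound $\bE\, e^{-X_i^2/\eps^2} \le C'B\eps$ for every $\eps\ge \eps_0$, since then the product is at most $(C'B\eps)^m$, and combining with $e^m$ gives the target estimate with $C=eC'$.

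To verify the one-dimensional bound, I would write $\bE\,e^{-X_i^2/\eps^2}$ via the layer-cake representation as $\int_0^1 \Pr\{X_i^2<\eps^2\log(1/u)\}\,du$ and then change variables $s=\eps\sqrt{\log(1/u)}$ to obtain
\[
\bE\, e^{-X_i^2/\eps^2} \;=\; \int_0^\infty \Pr\{|X_i|\le s\}\,\frac{2s}{\eps^2}\,e^{-s^2/\eps^2}\,ds.
\]
I would then split the integral at $s=\eps_0$. For $s\ge \eps_0$ the hypothesis gives $\Pr\{|X_i|\le s\}\le Bs$; for $s<\eps_0\le \eps$ I use the crude bound $\Pr\{|X_i|\le s\}\le \Pr\{|X_i|\le \eps_0\}\le B\eps_0\le B\eps$. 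Substituting $u=s/\eps$ turns each remaining integral into a numerical constant times $\eps$ (namely $\int_0^\infty 2ue^{-u^2}du=1$ and $\int_0^\infty 2u^2 e^{-u^2}du=\sqrt{\pi}/2$), yielding $\bE\,e^{-X_i^2/\eps^2} \le B\eps(1+\sqrt{\pi}/2)$, which is exactly what we needed.

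There is no genuine obstacle here; the only care needed is handling the range $s<\eps_0$ where the small-ball hypothesis is not directly available. The threshold $\eps\ge \eps_0$ in the statement is tailored precisely for this: it lets us replace $\Pr\{|X_i|\le s\}$ by $B\eps$ (rather than by $1$) on the short interval $[0,\eps_0]$, keeping the final bound linear in $\eps$. Absorbing the numerical factor $e(1+\sqrt{\pi}/2)$ into the universal constant $C$ completes the argument.
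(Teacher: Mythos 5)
The paper does not prove this lemma itself; it cites it directly from Rudelson and Vershynin \cite[Lemma 2.2]{RV08}. Your proof is correct and is essentially the Rudelson--Vershynin argument: the Chernoff-style bound $\Pr\{\sum X_i^2\le \eps^2 m\}\le e^m\prod_i \bE e^{-X_i^2/\eps^2}$ by independence and Markov, followed by a layer-cake evaluation of the one-dimensional exponential moment, splitting at the threshold $\eps_0$ where the small-ball hypothesis kicks in. The computations $\int_0^\infty 2u e^{-u^2}\,du=1$ and $\int_0^\infty 2u^2 e^{-u^2}\,du=\sqrt{\pi}/2$ and the handling of the range $s<\eps_0$ via monotonicity and $\eps_0\le\eps$ are all correct, giving $C=(1+\sqrt{\pi}/2)e$.
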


One can use the tensorization lemma to control the anti-concentration of $\norm{Q_n\bm{v}}$ where $\bm{v}$ is a fixed vector. Indeed, let $R_1,\ldots, R_n$ denote the (independent) rows of $Q_n$. Then $\norm{Q_n\bm{v}}^2=\sum_{i=1}^n\product{R_i,\bm{v}}^2
$, and we can apply \cref{tensorization} to $X_i:=\product{R_i,\bm{v}}$. Moreover, we can use \cref{binomial} to bound the L\'evy concentration function of each $X_i$. This gives:

\begin{lem}[Invertibility on a single vector via small ball probability]\label{invertibility single vector via small ball}
	For any $b>0$ and $\mu,\ga \in (0,1)$ there exist $c_7=c_7(b,\ga,\mu)>0$ and $C_7=C_7(b,\ga)>0$ such that the following holds. For any $\bm{v}\in\bR^n$ with $\norm{\D(\bm{v})}\ge b\sqrt{n}$ and any $\eps \ge \frac{1}{\CLCD_{\mu n,\ga}(\bm{v})}+e^{-c_7n}$, we have
	\[
	\Pr\big\{\norm{Q_n\bm{v}}\le \eps\sqrt{n}\big\}
	\le (C_7\eps)^n.
	\]
\end{lem}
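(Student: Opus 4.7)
The plan is to reduce the statement to a small ball estimate on a single coordinate of $Q_n\bm{v}$ and then tensorize. Let $R_1,\ldots,R_n$ denote the independent rows of $Q_n$; each $R_i$ is a uniform $\{0,1\}$-vector with exactly $n/2$ zero coordinates, so the random variables $X_i := \langle R_i,\bm{v}\rangle$ are i.i.d.\ copies of the combinatorial sum $W_{\bm{v}} = \sum_j \eta_j v_j$ appearing in Theorem \ref{binomial}. Since $\|Q_n\bm{v}\|^2 = \sum_{i=1}^n X_i^2$, the tensorization lemma (Lemma \ref{tensorization}) will give the desired product-form bound once we have a per-coordinate L\'evy concentration estimate of the form $\cL(X_i,\eps)\le B\eps$ for some $B=B(b,\gamma)$ and every $\eps$ above the stated threshold.

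\textbf{Step 1: Apply the small ball inequality.} Since $\|\D(\bm{v})\|\ge b\sqrt{n}$, Theorem \ref{binomial} (with $\alpha=\mu n$) yields a constant $C=C(b,\gamma)$ such that
\[
\cL(X_i,\eps)\;\le\;C\eps\;+\;\frac{C}{\CLCD_{\mu n,\gamma}(\bm{v})}\;+\;Ce^{-2\mu^2 n}\qquad\text{for all }\eps\ge 0.
\]

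\textbf{Step 2: Absorb the error terms into $\eps$.} Choose $c_7=c_7(b,\gamma,\mu)>0$ small enough that $Ce^{-2\mu^2 n}\le e^{-c_7 n}$ for all large $n$ (this is where the dependence of $c_7$ on $\mu$ enters). Then for every $\eps\ge \frac{1}{\CLCD_{\mu n,\gamma}(\bm{v})}+e^{-c_7 n}$ we have $C/\CLCD_{\mu n,\gamma}(\bm{v})\le C\eps$ and $Ce^{-2\mu^2 n}\le C\eps$, so
\[
\cL(X_i,\eps)\;\le\;3C\eps\qquad\text{for every }\eps\ge\eps_0:=\tfrac{1}{\CLCD_{\mu n,\gamma}(\bm{v})}+e^{-c_7 n}.
\]
In particular $\Pr\{|X_i|\le\eps\}\le B\eps$ with $B:=3C$, which depends only on $b$ and $\gamma$.

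\textbf{Step 3: Tensorize.} Apply Lemma \ref{tensorization} to the independent random variables $X_1,\ldots,X_n$ with the parameter $\eps_0$ above and $B=3C$: for every $\eps\ge\eps_0$,
\[
\Pr\bigl\{\|Q_n\bm{v}\|\le\eps\sqrt{n}\bigr\}=\Pr\bigl\{\|(X_1,\ldots,X_n)\|\le\eps\sqrt{n}\bigr\}\le(CB\eps)^n,
\]
which is the claimed bound with $C_7:=CB$ depending only on $b$ and $\gamma$.

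There is really no serious obstacle here: the whole content is packaged inside Theorem \ref{binomial} and the tensorization lemma. The one point that deserves care is the split of dependencies, namely that $c_7$ is allowed to depend on $\mu$ (since it must dominate the $\mu$-dependent term $Ce^{-2\mu^2 n}$) while $C_7$ must depend only on $b$ and $\gamma$ (which is automatic from Theorem \ref{binomial}, whose constant does not see $\mu$).
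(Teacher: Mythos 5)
Your proof is correct and takes exactly the approach the paper intends: the paper does not give a formal proof of this lemma but sketches it in the paragraph immediately preceding the statement (apply \cref{binomial} to each $X_i:=\langle R_i,\bm{v}\rangle$, then tensorize via \cref{tensorization}), and your argument fleshes out that sketch faithfully, including the one subtlety worth flagging — that $c_7$ must depend on $\mu$ to absorb the $Ce^{-2\mu^2 n}$ term, while $C_7$ inherits only the $(b,\gamma)$-dependence from \cref{binomial}.
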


To run the covering argument, we need the following discretization of the level set $S_H$.
 
\begin{lem}[Discretization of level sets]\label{discretization level sets}
Assume that the parameters $\de,\rho,\mu$ and $\ga$ satisfy \eqref{eq:assumption}.
Then there exists a net $\cN \subset S_H+\frac{8\mu\sqrt{n}}{H} B_2^n$ of cardinality at most $\mu^{-2}H^2\cdot (C_8H/\sqrt{n})^n$ with the following properties.
\begin{itemize}
	\item[\rm (P1)] For every $\bm{w} \in \cN$, one has $\CLCD_{\al/2,\ga/2}(\bm{w}) \ge H/32$ and $\D(\bm{w})\gtrsim_{\de,\rho} \sqrt{n}$. 
	\item[\rm (P2)] For every (deterministic) $m\times n$ matrix $A$, and for any $\bm{v}\in S_H$, one can find $\bm{w}\in \cN$ so that 
	\[
	\norm{A(\bm{v}-\bm{w})}\le \frac{4\mu\sqrt{n}}{H}\left( 2 {\lVert\left.A \right|_{\cH}\rVert}_{\op}+\frac{\lVert A\rVert_{\op}}{n}\right).
	\]
\end{itemize}
\end{lem}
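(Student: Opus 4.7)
The plan is to construct $\cN$ by separately discretizing the $\bm{1}$-component and the $\cH$-component of each $\bm v \in S_H$, using the CLCD to approximate the $\cH$-component on a lattice. For $\bm v \in S_H$, I decompose $\bm v = \bar v\bm 1 + \bm v'$ with $\bm v' \in \cH$ and $\norm{\bar v} \le \norm{\bm v}/\sqrt n = 1/\sqrt n$, and fix $\theta \in [H,2H]$ witnessing $\CLCD_{\mu n,\ga}(\bm v) \le 2H$, so that there is $\bm p \in \bZ^{\binom{n}{2}}$ with $\norm{\theta\D(\bm v) - \bm p} < \min(\ga\theta\norm{\D(\bm v)},\mu n)$.

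The heart of the construction is the approximation of $\bm v'$. Let $V = \D(\cH) \subset \bR^{\binom{n}{2}}$ denote the $(n-1)$-dimensional subspace of differences; the identity $\norm{\D(\bm x)}^2 = n\norm{\bm x}^2$ on $\cH$ shows that the restriction of $\D$ to $\cH$ is a bijection $\cH \to V$ scaling norms by $\sqrt n$. Let $\tilde{\bm u}\in\cH$ be the unique vector with $\D(\tilde{\bm u}) = \pi_V(\bm p)/\theta$, where $\pi_V$ denotes the orthogonal projection onto $V$; then
\[
\norm{\bm v'-\tilde{\bm u}} = \tfrac{1}{\sqrt n}\norm{\pi_V(\bm p) - \theta\D(\bm v)} \le \tfrac{1}{\sqrt n\,\theta}\norm{\bm p - \theta\D(\bm v)} \le \tfrac{\mu\sqrt n}{H},
\]
and $\tilde{\bm u}$ lies on the rescaled root lattice $\tfrac{1}{n\theta}\{\bm m \in \bZ^n : \sum_i m_i = 0\} \subset \cH$. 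I then discretize $\bar v$ on a grid of step $\le 4\mu/(Hn)$ in $[-1/\sqrt n,1/\sqrt n]$ (yielding $\lesssim H\sqrt n/\mu$ values of $\bar w$) and $\theta$ on a grid of step $\lesssim \mu\sqrt n$ in $[H,2H]$ (yielding $\lesssim H/(\mu\sqrt n)$ values), and define $\cN$ to consist of all combinations $\bm w = \bar w\bm 1 + \tilde{\bm u}$ arising this way from some $\bm v \in S_H$. A volumetric Minkowski-style count of the admissible $\tilde{\bm u}$ in the $\cH$-ball of radius $\sim 1$, multiplied by the two scalar discretization counts, yields the claimed bound $\mu^{-2}H^2(C_8H/\sqrt n)^n$.

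Verifying (P1) is an application of \Cref{stable-CLCD}. By construction $\norm{\bm v - \bm w} \le 8\mu\sqrt n/H$ for some $\bm v \in S_H$, and since $\norm{\D(\bm v)} \gtrsim_{\de,\rho}\sqrt n$ by \Cref{non-constant separated} and $\mu \ll_{\de,\rho}\ga$, the hypothesis $\norm{\bm v - \bm w} < \ga\norm{\D(\bm v)}/(5\sqrt n)$ of \Cref{stable-CLCD} is met; the lemma then gives $\CLCD_{\mu n/2,\ga/2}(\bm w) \ge \min(H,\mu n/(4\sqrt n\cdot 8\mu\sqrt n/H)) = H/32$, and $\norm{\D(\bm w)} \gtrsim_{\de,\rho}\sqrt n$ follows from the triangle inequality $\norm{\D(\bm w)} \ge \norm{\D(\bm v)} - \sqrt n\norm{\bm v - \bm w}$. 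Verifying (P2) is direct: splitting $\bm v - \bm w = (\bm v' - \bm w') + (\bar v - \bar w)\bm 1$ and using $\norm{\bm v' - \bm w'} \le 8\mu\sqrt n/H$, $\norm{\bar v - \bar w} \le 4\mu/(Hn)$, and $\norm{A\bm 1} \le \lVert A\rVert_{\op}\sqrt n$ give
\[
\norm{A(\bm v - \bm w)} \le {\lVert\left. A\right|_{\cH}\rVert}_{\op}\norm{\bm v' - \bm w'} + \norm{\bar v - \bar w}\norm{A\bm 1} \le \tfrac{8\mu\sqrt n}{H}{\lVert\left. A\right|_{\cH}\rVert}_{\op} + \tfrac{4\mu}{H\sqrt n}\lVert A\rVert_{\op},
\]
which is exactly $\tfrac{4\mu\sqrt n}{H}(2{\lVert\left. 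A\right|_{\cH}\rVert}_{\op} + \lVert A\rVert_{\op}/n)$, as required. The main difficulty lies in the cardinality count: the fine lattice $\tfrac{1}{n\theta}\{\bm m\in\bZ^n:\sum m_i=0\}$ has far more than $(C_8H/\sqrt n)^n$ points in the $\cH$-ball of radius $\sim 1$, so the count must be made effective by restricting to $\tilde{\bm u}$ coming from an integer $\bm p$ lying within $\mu n$ of $V$, which collapses the apparent $\binom{n}{2}$ degrees of freedom back to the $n$ degrees of freedom visible in the target bound.
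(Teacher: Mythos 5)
Your construction of $\tilde{\bm{u}}$ and the verifications of (P1) and (P2) are sound, but there is a genuine gap in the cardinality count -- which you flag yourself -- and the fix you sketch in the last sentence does not work. The lattice $\tfrac{1}{n\theta}(\bZ^n\cap\cH)$ on which $\tilde{\bm{u}}$ lives has spacing $\sim 1/(nH)$, so the number of its points in the $\cH$-ball of radius $O(1)$ is on the order of $(nH)^{n-1}$, which exceeds the target $(C_8 H/\sqrt{n})^n$ by a factor of roughly $n^{3n/2}$. Restricting to those $\tilde{\bm{u}}$ arising from an integer $\bm{p}$ within $\mu n$ of $V$ does not collapse this: the map $\D^{\intercal}\colon\bZ^{\binom n2}\to\bZ^n\cap\cH$ is surjective (it sends $e_{ij}\mapsto e_i-e_j$, which generate the root lattice $A_{n-1}$), and for each $\bm{m}\in\bZ^n\cap\cH$ the fibre $\{\bm{p}\colon\D^{\intercal}\bm{p}=\bm{m}\}$ is a coset of the $\binom{n-1}{2}$-dimensional lattice $\bZ^{\binom n2}\cap V^{\perp}$, whose covering radius is $O(n)$. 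Hence essentially every $\bm{m}$ with $\norm{\bm{m}}\lesssim nH$ is realisable by some $\bm{p}$ within distance $O(n)\lesssim\mu n$ of $V$, so the restriction eliminates almost nothing. The issue is not the $\binom n2$ degrees of freedom of $\bm{p}$ (which $\D^{\intercal}$ does collapse to $n-1$); it is that the resulting lattice is finer than the needed resolution $\sim\mu/H$ per coordinate by a factor of about $\mu n$.

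The paper's argument avoids this by choosing a pivot coordinate $j$ in $\sigma=\{i\colon\norm{v_i}\le\sqrt{2/n}\}$ and, via pigeonhole over $j\in\sigma$, extracting a single slice $\bm{p}^{(j)}=(p_{ij})_{i\ne j}\in\bZ^{n-1}$ of the large integer vector $\bm{p}$ with $\norm{T\bm{v}^{(j)}-\bm{p}^{(j)}}\le 2\mu\sqrt{n}$. Because $|v_j|\le\sqrt{2/n}$ forces $\norm{\bm{v}^{(j)}}=O(1)$, this slice satisfies $\norm{\bm{p}^{(j)}}\le 7H$, so the number of admissible integer points $\bm{q}\in\bZ^{n-1}\cap B(0,7H)$ is $(1+21H/\sqrt{n})^n$, giving the target bound after multiplying by the $O(H/\mu)^2$ grids for $\la_0$ and $T_0$ and the rounding-lemma overhead. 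In other words, the saving comes from approximating the \emph{differences from one well-chosen pivot} rather than the aggregated vector $\D^{\intercal}\bm{p}$, whose entries can be as large as $nH$. If you want to repair your approach, you would need to replace the global $\D^{\intercal}\bm{p}$ with a bounded-norm representative, and that pivoting is essentially the missing idea.
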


{\bf Remark.} The property (P1) is in fact redundant since every vector $\bm{w}$ in $S_H+\frac{8\mu\sqrt{n}}{H} B_2^n$ satisfies (P1). However, it allows one to simplify the presentation. To prove the claim, let us consider any $\bm{w}\in S_H+\frac{8\mu\sqrt{n}}{H} B_2^n$. Take $\bm{v}\in S_H$ so that $\norm{\bm{v}-\bm{w}} \le 8\mu\sqrt{n}/H \lesssim_{\de} \mu$. Since $\bm{v}\in S_H \subseteq \cN(\de,\rho)$, \cref{non-constant separated} shows $\norm{\D(\bm{v})} \gtrsim_{\de,\rho}\sqrt{n}$. Therefore, $\norm{\bm{v}-\bm{w}}<\frac{\ga\norm{\D(\bm{v})}}{5\sqrt{n}}$ for $\mu \ll_{\de,\rho} \ga$, and so \cref{stable-CLCD} is applicable. We then get 
\[
\CLCD_{\mu n/2,\ga/2}(\bm{w}) \ge \min\Big\{\CLCD_{\al,\ga}(\bm{v}), \frac{\mu \sqrt{n}}{4\norm{\bm{v}-\bm{w}}}\Big \} \ge H/32
\]
as $\CLCD_{\al,\ga}(\bm{v})\ge H$ and $\norm{\bm{v}-\bm{w}}\le 8\mu\sqrt{n}/H$.
Moreover, the triangle inequality gives
\[
\norm{\D(\bm{w})} \ge \norm{\D(\bm{v})}-\norm{\D(\bm{v}-\bm{w})} \ge \norm{\D(\bm{v})}-\sqrt{n}\norm{\bm{v}-\bm{w}}\ge \norm{\D(\bm{v})}/2 \gtrsim_{\de,\rho} \sqrt{n},
\]
where in the third inequality we used the bound $\norm{\bm{v}-\bm{w}}\le \frac{\ga \D(\bm{v})}{5\sqrt{n}} \le \frac{\D(\bm{v})}{5\sqrt{n}}$. This completes the proof of our claim.

It remains to construct a net $\cN \subset S_H+\frac{8\mu\sqrt{n}}{H} B_2^n$ of cardinality at most $\mu^{-2}H^2\cdot (C_8H/\sqrt{n})^n$ satisfying (P2), a task we now begin.

\begin{proof}[Proof of \cref{discretization level sets}]
We will explore the additive structure of $S_H$ to construct a small net. To this end, fix $\bm{v}=(v_1,\ldots,v_n)\in S_H$, and let
	\[
	\sigma:=\Big\{i\in [n]: \norm{v_i}\le \sqrt{2/n}\Big\}.
	\]
	Since $\norm{\bm{v}}=1$, it follows from Chebyshev inequality that
	\begin{equation}\label{eq:approx-I}
	\norm{\sigma} \ge n/2.
	\end{equation}
	
	Denote $T:=\CLCD_{\mu n,\ga}(\bm{v})$.
	By the definition of $S_H$, we have $H\le T<2H$. 
	
	According to the definition of CLCD, there exists an integer vector $\bm{p}=(p_{ij})_{1\le i<j\le n}$	such that
	\begin{equation}\label{eq:approx-II}
	\norm{T\cdot \D(\bm{v})-\bm{p}}<\mu n.
	\end{equation}
	
	For $1\le j \le n$, consider the vectors $\bm{v}^{(j)}\in \bR^{n-1}$ and $\bm{p}^{(j)}\in \bZ^{n-1}$ defined as follows
	\[
	\bm{v}^{(j)}:=(v_1-v_j,\ldots,v_{j-1}-v_j,v_j-v_{j+1},\ldots,v_j-v_n), \quad \bm{p}^{(j)}:=(p_{1j},\ldots,p_{j-1 j},p_{j j+1},\ldots,p_{jn}).
	\]
	
	It follows from \eqref{eq:approx-II} that 
	\[
	\sum_{j\in [n]}|T\bm{v}^{(j)}-\bm{p}^{(j)}|^2=2\norm{T\cdot \D(\bm{v})-\bm{p}}^2 < 2(\mu n)^2.
	\]
	
	Noting that $|\sigma|\ge n/2$ by \eqref{eq:approx-I}, and using the pigeonhole principle, we thus get an index $j\in \sigma$ with 
	\[
	|T\bm{v}^{(j)}-\bm{p}^{(j)}|^2 \le \frac{2(\mu n)^2}{|\sigma|}\le 4\mu^2n.
	\]
	
	Taking the square root of both sides, and dividing by $T$ gives
	\begin{align}\label{approximate star}
		|\bm{v}^{(j)}-\frac{\bm{p}^{(j)}}{T}| \le \frac{2\mu \sqrt{n}}{T} \le \frac{2\mu\sqrt{n}}{H}\quad \text{for some } j \in [n].
	\end{align}
	
	By the inequality $(x+y)^2\le 2x^2+2y^2$, we get
	\begin{align*}
	|\bm{v}^{(j)}|^2 &\le 2n v_j^2+2(v_1^2+\ldots + v_n^2) \le 6,
	\end{align*}
	 where in the last step we used the bound $\norm{v_j}\le \sqrt{2/n}$ along with our assumption that $\norm{\bm{v}}=1$.
	
	Combining this bound with \eqref{approximate star} yields
	\begin{equation}\label{p}
	|\bm{p}^{(j)}| \le T|\bm{v}^{(j)}|+|T \bm{v}^{(j)}-\bm{p}^{(j)}|\le 2H\cdot \sqrt{6}+2\mu\sqrt{n} \le 7H.
	\end{equation}
	
	To locate $\bm{v}$, we discretize the ranges of $v_j$ and $T$. Consider the lattice intervals
	\begin{equation*}
	\Lambda:=\frac{\mu}{2H}\bZ \cap [-1,1], \qquad \Theta= \frac{1}{7}\mu\bZ\cap [H,2H].
	\end{equation*}
Then one can find $\la_0 \in \Lambda$ and $T_0\in \Theta$ such that 
\[
\norm{v_j-\la_0} \le \frac{\mu}{2H}, \quad \norm{T-T_0} \le \mu/7.
\]
Letting $\bm{w}=(\frac{p_{1j}}{T_0}+\la_0,\ldots,\frac{p_{j-1 j}}{T_0}+\la_0,\la_0,-\frac{p_{j j+1}}{T_0}+\la_0,\ldots,-\frac{p_{jn}}{T_0}+\la_0)$, we see that

\begin{align}\label{w}
\notag |\bm{v}-\bm{w}|^2 &=\sum_{i=1}^{j-1}\left(v_i-\frac{p_{ij}}{T_0}-\la_0\right)^2+\sum_{k=j+1}^{n}\left(v_k+\frac{p_{jk}}{T_0}-\la_0\right)^2\\ \notag
& \le 3\sum_{i=1}^{j-1}\Big\{\left(v_i-v_j-\frac{p_{ij}}{T}\right)^2+(v_j-\la_0)^2+\left(\frac{p_{ij}}{T}-\frac{p_{ij}}{T_0}\right)^2 \Big\}+(v_j-\la_0)^2\\ \notag
& \qquad \qquad +3\sum_{k=j+1}^{n}\Big\{\left(v_k-v_j+\frac{p_{jk}}{T}\right)^2+(v_j-\la_0)^2+\left(\frac{p_{jk}}{T_0}-\frac{p_{jk}}{T}\right)^2 \Big\}\\
&=3|\bm{v}^{(j)}-\frac{\bm{p}^{(j)}}{T}|^2+(3n-2)(v_j-\la_0)^2+\left(\frac{1}{T}-\frac{1}{T_0}\right)^2|\bm{p}^{(j)}|^2\le \frac{14\mu^2 n}{H^2}
\end{align}	 
where the second line uses the inequality $(x+y+z)^2 \le 3(x^2+y^2+z^2)$, while the last step uses $|\bm{v}^{(j)}-\frac{\bm{p}^{(j)}}{T}|\le 2\mu\sqrt{n}/H$, $|v_j-\la_0| \le \mu/(2H)$, $|\frac{1}{T}-\frac{1}{T_0}|\le \mu/(7H^2)$ and $|\bm{p}^{(j)}| \le 7H$.

It follows from \eqref{p} and \eqref{w} that $\bm{v}$ is within Euclidean distance $4\mu \sqrt{n}/H$ from the set
\[
\cF_j:=\Big\{\Big(\frac{q_1}{T_0}+\la_0,\ldots,\frac{q_{j-1}}{T_0}+\la_0,\la_0,\frac{q_{j+1}}{T_0}+\la_0,\ldots,\frac{q_n}{T_0}+\la_0\Big)\colon \la_0\in\Lambda, T_0\in \Theta, \bm{q}\in \bZ^{[n]\setminus\{j\}}\cap B(0,7H)\Big\}.
\]

There are at most $1+4H/\mu$ choices for $\la_0\in\Lambda$, at most $1+7H/\mu$ ways to choose $T_0\in \Theta$, and at most $(1+21H/\sqrt{n})^n$ possibilities for the integer points $\bm{q}$ in $B(0,7H)$. This results in
\[
|\cF_j| \le (1+4H/\mu)\cdot (1+7H/\mu)\cdot (1+21H/\sqrt{n})^n \le \mu^{-2}H^2(CH/\sqrt{n})^n,
\]
where in the last inequality we used the assumption that $H \gtrsim_{\de}\sqrt{n}$.

Applying \cref{rounding} to $S=S_H$ and $\be=4\mu \sqrt{n}/H$, we therefore obtain a net $\cN \subset S_H+\frac{8\mu\sqrt{n}}{H}B_2^n$ of cardinality at most 
\[
(2n+2)\cdot\norm{\cF_1\cup \ldots \cup \cF_n}\le (2n+2)\cdot n\cdot \mu^{-2}H^2(CH/\sqrt{n})^n \le \mu^{-2}H^2\cdot (C_8H/\sqrt{n})^n
\]
with the desired properties. This completes our proof.
\end{proof}

\begin{lem}[Invertibility on a level set] \label{invertibility level set}
There exist constants $\mu,\ga,c_9 \in (0,1)$ and $C_9>0$ such that the following holds. Suppose that $n\ge C_9$ and $H_0 \le H \le e^{c_9n}$. Then
	\[
	\Pr\Big\{\inf_{\bm{v}\in S_H}\norm{Q'_n\bm{v}} \le c_9 n/H\Big \} \le 2e^{-n}.
	\]
\end{lem}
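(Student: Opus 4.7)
The strategy is a standard covering argument: discretize $S_H$ via the net $\cN$ supplied by \cref{discretization level sets}, control $\|Q_n'\bm{w}\|$ for each $\bm{w}\in \cN$ using \cref{invertibility single vector via small ball}, take a union bound over $\cN$, and absorb the approximation error by exploiting the spectral gap established in \cref{restricted norm}.

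First I would fix a large absolute constant $K$ so that, by \cref{restricted norm}, the event
\[
\mathcal{E}_0:=\bigl\{\|\restr{Q_n'}{\cH}\|_{\op}\le K\sqrt{n}\bigr\}
\]
has probability at least $1-e^{-n}$. Since $\|Q_n'\|_{\op}\le n$ deterministically, property (P2) of the net gives, on $\mathcal{E}_0$, that for every $\bm{v}\in S_H$ there exists $\bm{w}\in \cN$ with
\[
\|Q_n'(\bm{v}-\bm{w})\|\le \frac{4\mu\sqrt{n}}{H}\bigl(2K\sqrt{n}+1\bigr)\le \frac{C_0\,\mu\, n}{H}
\]
for some absolute constant $C_0$. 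Consequently, on $\mathcal{E}_0$ the event $\{\inf_{\bm{v}\in S_H}\|Q_n'\bm{v}\|\le c_9 n/H\}$ is contained in $\{\exists\,\bm{w}\in \cN\colon \|Q_n'\bm{w}\|\le (c_9+C_0\mu)\,n/H\}$.

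Next, I would apply \cref{invertibility single vector via small ball} to each fixed $\bm{w}\in \cN$. By property (P1) of the net, $\|\D(\bm{w})\|\gtrsim_{\de,\rho}\sqrt{n}$ and $\CLCD_{\mu n/2,\ga/2}(\bm{w})\ge H/32$. Setting $\eps:=(c_9+C_0\mu)\sqrt{n}/H$ and choosing $c_9$ strictly smaller than the constant $c_7$ of \cref{invertibility single vector via small ball} (so that, using $H\le e^{c_9 n}$, we have $\eps\ge 32/H+e^{-c_7 n}$ for $n$ large), that lemma gives
\[
\Pr\bigl(\|Q_n'\bm{w}\|\le \eps\sqrt{n}\bigr)\le (C_7\eps)^n=\bigl(C_7(c_9+C_0\mu)\sqrt{n}/H\bigr)^n.
\]
Combining with $|\cN|\le \mu^{-2}H^2(C_8 H/\sqrt{n})^n$ and $H^2\le e^{2c_9 n}$, the union bound yields
\[
\Pr\bigl(\exists\,\bm{w}\in\cN\colon \|Q_n'\bm{w}\|\le \eps\sqrt{n}\bigr)\le \mu^{-2}e^{2c_9 n}\bigl(C_7 C_8(c_9+C_0\mu)\bigr)^n.
\]
I would then pick $\mu$ small enough (respecting \eqref{eq:assumption} and forcing $C_7 C_8 C_0\mu\le 1/4$), and afterwards $c_9$ small enough that $e^{2c_9}\,C_7 C_8(c_9+C_0\mu)\le e^{-2}$; then the right-hand side is at most $e^{-n}$ for $n\ge C_9$. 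Adding $\Pr(\mathcal{E}_0^c)\le e^{-n}$ delivers the claimed bound $2e^{-n}$.

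The main obstacle is the joint calibration of the constants $\mu$, $\ga$ and $c_9$: $\mu$ and $\ga$ must already satisfy \eqref{eq:assumption} so that \cref{discretization level sets} (and the underlying CLCD-stability from \cref{stable-CLCD}) is applicable; $\mu$ must additionally be small relative to $c_9$ so that the approximation error $C_0\mu n/H$ does not swallow the target radius; and $c_9$ must be small enough both to defeat the $e^{2c_9 n}$ prefactor coming from $H\le e^{c_9 n}$ in the union bound and to keep $\eps$ above the additive threshold $e^{-c_7 n}$ required by \cref{invertibility single vector via small ball}. Once these inequalities are balanced, everything else is routine bookkeeping.
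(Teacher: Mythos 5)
Your argument follows the same covering strategy as the paper's: condition on the restricted operator norm event from \cref{restricted norm}, discretize $S_H$ by the net of \cref{discretization level sets}, apply the single-vector small-ball bound of \cref{invertibility single vector via small ball} at each net point, take a union bound, and absorb the approximation error. One numerical slip: asking only $C_7C_8C_0\mu\le 1/4$ is not sufficient, since as $c_9\to 0^{+}$ the quantity $e^{2c_9}C_7C_8(c_9+C_0\mu)$ tends to $C_7C_8C_0\mu$, which $1/4$ does not place below $e^{-2}$; you need $\mu$ small enough that $C_7C_8C_0\mu$ is strictly below your target (say $e^{-2}/2$) before $c_9$ can be tuned. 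Also, since $Q_n'$ has $n-1$ rows, the single-vector bound should be $(C_7\eps)^{n-1}$, though this changes nothing substantive.

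That said, your calibration is actually a little cleaner than the paper's in one respect. By keeping $\eps=(c_9+C_0\mu)\sqrt{n}/H$ as a sum rather than insisting that the net's approximation error $C_0\mu n/H$ be dominated by a fraction of the target radius $c_9 n/H$, you let $\mu$ be fixed first and $c_9$ second, with only a one-directional constraint $c_9<c_7(\mu)$. The paper's phrasing, by contrast, imposes both $c\le c_7$ (where $c_7=c_7(b,\ga,\mu)$ is, from the proof of \cref{binomial} with $\al=\mu n$, on the order of $\mu^2$) and $\mu\ll_K 1$ meaning in effect $\mu\lesssim c/K$ in the final perturbation step, and these two demands pull in opposite directions. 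Your decoupled ordering sidesteps that tension while delivering the same conclusion.
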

\begin{proof}
By \cref{restricted norm}, there exists a constant $K\ge 1$ such that 
\[
\Pr\{{\lVert\left.Q'_n \right|_{\cH}\rVert}_{\op}>K\sqrt{n}\} \le e^{-n}.
\]
	
Thus, in order to complete the proof, it suffices to find constants $C, c>0$ so that for $n\ge C$ and $H_0 \le H \le e^{cn}$, the event
\[
\cE:=\Big\{\inf_{\bm{v}\in S_H}\norm{Q'_n\bm{v}}\le \frac{c n}{2H}\enskip \text{and} \enskip {\lVert\left.Q'_n \right|_{\cH}\rVert}_{\op}\le K\sqrt{n}\Big \}
\]
has probability at most $e^{-n}$.
	
We claim that this holds with the following choice of parameters:
\[
c=\min\Big\{1/(8C_7C_8),c_7, 1\Big\}, \quad C=\max\{(64/c)^2, (C_7c)^{-2}\}, \quad 0<\de,\ga \ll 1 \enskip \text{and}  \enskip 0<\mu\ll_{\de,\rho, K} \ga,
\]	
where $C_7, c_7>0$ are the constants in  \cref{invertibility single vector via small ball}, and $C_8>0$ is the constant in \cref{discretization level sets}.	
Let $\cN$ be the net defined in \cref{discretization level sets}. Fix $\bm{w}\in \cN$. Then we have $\CLCD_{\mu n/2,\ga/2}(\bm{w}) \ge H/32$ and $\D(\bm{w})\gtrsim_{\de,\rho} \sqrt{n}$. We see that $\eps:=c\sqrt{n}/H$ satisfies $\eps \ge \frac{1}{\CLCD_{\mu n/2,\ga/2}(\bm{w})}+e^{-c_7n}$ assuming that $n\ge (64/c)^2$ and $c\le c_7$. Thus \cref{invertibility single vector via small ball} applies. We then get
\[
\Pr\Big \{\norm{Q_n'\bm{w}}\le \frac{c n}{H}\Big \} \le \left(\frac{C_7c \sqrt{n}}{H}\right)^{n-1}.
\]
By the union bound, noting that $n\ge (C_7c)^{-2}$ and $H \le e^{n}$, we have 
\begin{equation}\label{eq:invertibility on nets level set}
\Pr\Big \{\inf_{\bm{w}\in \cN}\norm{Q_n'\bm{w}}\le \frac{c n}{H}\Big \} \le \mu^{-2}H^2\left(\frac{C_8H}{\sqrt{n}}\right)^n \left(\frac{C_7c \sqrt{n}}{H}\right)^{n-1}\le \mu^{-2}H^3 \cdot (C_8C_7c)^n \cdot H \le e^{n}\cdot 8^{-n} \le  e^{-n}.
\end{equation}

Assume that the event $\cE$ holds.	Fix $\bm{v}\in S_H$ with $\norm{Q_n'\bm{v}}\le \frac{c n}{2H}$. From the definition of $\cN$, we see that for every fixed realization of $Q'_n$ there exists $\bm{w}\in \cN$ for which
\[
\norm{Q'_n(\bm{v}-\bm{w})}\le \frac{4\mu\sqrt{n}}{H} \left( 2{\lVert\left.Q'_n \right|_{\cH}\rVert}_{\op}+\frac{\lVert Q'_n\rVert_{\op}}{n}\right).
\]
By the triangle inequality we thus get
\[
\norm{Q_n'\bm{w}} \le \norm{Q_n'\bm{v}}+\frac{4\mu\sqrt{n}}{H}\left( 2 {\lVert\left.Q_n' \right|_{\cH}\rVert}_{\op}+\frac{\lVert Q_n'\rVert_{\op}}{n}\right) \le \frac{c n}{2H}+\frac{4\mu\sqrt{n}}{H}\cdot (2 K\sqrt{n}+1)\le \frac{c n}{H}
\]
as $\mu \ll_{K} 1$.

We have shown that the event $\cE$ implies the event that $\inf\limits_{\bm{w}\in \cN}\norm{Q_n'\bm{w}}\le \frac{c n}{H}$, whose probability is at most $e^{-n}$ due to \eqref{eq:invertibility on nets level set}. This completes our proof.
\end{proof}

Now we derive \cref{random normal} from \cref{invertibility level set}.

\begin{proof}[Proof of \cref{random normal}]
	Let $\mu,\ga, c_9 \in (0,1)$ be constants from \cref{invertibility level set}.	
	Consider the event
	\[
	\cE:=\big\{\exists \bm{v}\in \cN(\de,\rho) \enskip \text{with} \enskip Q_n'\bm{v}=0 \enskip \text{and} \enskip \CLCD_{\mu n,\ga}(\bm{v})\le e^{c_9n}\big \}.
	\]
	By \cref{clcd non-constant}, we have $\CLCD_{\mu n,\ga}(\bm{v})\ge H_0$ for all $\bm{v}\in \cN(\de,\rho)$, and so
	
	\[
	\Pr(\cE) \le \sum_{H_0/2\le 2^k\le e^{c_9 n}}\Pr\big\{\exists \bm{v}\in S_{2^k} \enskip \text{with} \enskip Q_n'\bm{v}=0\big \}.
	\]
	To estimate the sum, we apply \cref{invertibility level set}. We then get
	\[
	\Pr\big\{\exists \bm{v}\in S_{2^k} \enskip \text{with} \enskip Q_n'\bm{v}=0\big \} \le \Pr\big\{\inf_{\bm{v}\in S_{2^k}}\norm{Q_n'\bm{v}}\le c_9 n/2^k \big \} \le 2 e^{-n}
	\]
	for $n$ sufficiently large. Taking the union bound yields
	\[
	\Pr(\cE) \le c_9 n \log_2 e \cdot 2 e^{-n} \le 2^{-n}. \qedhere
	\]
\end{proof}

\subsection{Proofs of \Cref{thm:row regular,thm:distances}}\label{sec:final hurdle}

We first deduce \cref{thm:distances} from our small ball probability estimate and our bound on CLCD of the random normal.

\begin{proof}[Proof of \cref{thm:distances}]
Choose parameters $\de$ and $\rho$ such that $0<\de,\rho \ll 1$.
It follows from \cref{Invertibility constant vectors} that with probability at least $1-2e^{-c_5n}$ any unit vector orthogonal to $H_n$ is in $\cN
(\de,\rho)$. Indeed, we learn from \cref{Invertibility constant vectors} that
\[
\Pr\big\{\exists \bm{v}\in \calC(\de,\rho) \enskip \text{orthogonal to } H_n\big \} \le \Pr\big\{\inf_{\bm{v}\in \calC(\de,\rho)}\norm{Q_n'\bm{v}} \le \sqrt{n}/10\big \} \le 2e^{-c_5n}.
\]

Applying \cref{random normal} together with the above observation, we get
\[
\bm{v} \enskip \text{is in } \cN(\de,\rho) \enskip \text{and } \CLCD_{\mu n,\ga}(\bm{v}) \ge e^{c_6n} 
\]
with probability at least $1-2e^{-c_5n}-2^{-n}$. Application of \cref{binomial} finishes the proof.
\end{proof}

In the rest of this section we will prove \cref{thm:row regular}. 
For this purpose, fix some parameters $\de,\rho\in (0,1)$ whose values will be chosen later, and define the sets of sparse, compressible, and incompressible vectors as follows:
\begin{gather*}
\Sparse(\de):=\{\bm{x}\in \bS^{n-1}\colon \norm{\supp(\bm{x})} \le \de n\},\\
\Comp(\de,\rho):=\{\bm{x}\in \bS^{n-1}\colon \dist(\bm{x},\Sparse(\de)) \le \rho\},\\
\Incomp(\de,\rho):=\bS^{n-1}\setminus \Comp(\de,\rho).
\end{gather*}

Next we derive \cref{thm:row regular} from \cref{thm:distances}, using the \textquote{invertibility via distance} lemma from \cite{RV08}.

\begin{lem}[Invertibility via distance]\label{invertibility via distance}
Let $M$ be any random matrix. Let $R_1,\ldots, R_n$ denote the row vectors of $M$, and let $H_k$ denote the span of all row vectors except the $k$-th.	Then for every $\de,\rho \in (0,1)$ and every $\eps \ge 0$, one has
\[
\Pr\Big\{\inf_{\bm{x}\in \Incomp(\de,\rho)}\norm{\bm{x}^{\intercal}Q_n}\le \eps \frac{\rho}{\sqrt{n}}\Big\} \le \frac{1}{\de n}\sum_{k=1}^{n}\Pr\big\{\dist(R_k,H_k)\le \eps\big\}.
\]
\end{lem}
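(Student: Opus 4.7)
The plan is to follow the Rudelson--Vershynin strategy and turn the event $\{\inf_{\bm{x}\in\Incomp(\de,\rho)}\|\bm{x}^{\intercal} M\|\le \eps\rho/\sqrt{n}\}$ into a statement that many rows $R_k$ are close to their corresponding hyperplanes $H_k$. The key elementary observation is that for any unit vector $\bm{x}$ and any $k$ with $x_k\neq 0$, the combination $-\sum_{j\ne k}(x_j/x_k)R_j$ lies in $H_k$, so
\[
\dist(R_k,H_k)\ \le\ \Big\|R_k+\sum_{j\ne k}\tfrac{x_j}{x_k}R_j\Big\|\ =\ \frac{\|\bm{x}^{\intercal} M\|}{|x_k|},
\]
upon viewing $\bm{x}^{\intercal} M=\sum_j x_jR_j$. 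In particular, if $\|\bm{x}^{\intercal} M\|\le \eps\rho/\sqrt{n}$ and $|x_k|\ge \rho/\sqrt{n}$, then $\dist(R_k,H_k)\le \eps$.

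Next I would invoke the standard \emph{spreading} property of incompressible vectors: if $\bm{x}\in\Incomp(\de,\rho)$, then
\[
K(\bm{x}):=\big\{k\in[n]\colon |x_k|\ge \rho/\sqrt{n}\big\}\quad\text{has size}\quad |K(\bm{x})|\ge \de n.
\]
The reason is that if $|K(\bm{x})|<\de n$, then truncating $\bm{x}$ to $K(\bm{x})$ would produce a vector with support of size $<\de n$ whose normalization lies in $\Sparse(\de)$, and a direct calculation shows this normalization is within distance comparable to $\rho$ from $\bm{x}$, contradicting the assumption $\dist(\bm{x},\Sparse(\de))>\rho$. Combining this with the inequality above, the event that some $\bm{x}\in\Incomp(\de,\rho)$ witnesses $\|\bm{x}^{\intercal} M\|\le \eps\rho/\sqrt{n}$ forces the random quantity $N:=|\{k\colon \dist(R_k,H_k)\le\eps\}|$ to satisfy $N\ge \de n$.

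The final step is a single application of Markov's inequality to $N$:
\[
\Pr\Big\{\inf_{\bm{x}\in\Incomp(\de,\rho)}\|\bm{x}^{\intercal} M\|\le \tfrac{\eps\rho}{\sqrt{n}}\Big\}\ \le\ \Pr\{N\ge \de n\}\ \le\ \frac{\bE N}{\de n}\ =\ \frac{1}{\de n}\sum_{k=1}^n\Pr\big\{\dist(R_k,H_k)\le \eps\big\},
\]
which is exactly the claimed bound.

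I expect the only mildly delicate point to be the spreading property of incompressible vectors, since a naive truncation argument gives the correct order of magnitude but with a small loss in the constant in front of $\rho$. This is harmless because the lemma is usually applied with $\de,\rho$ small but fixed, so any constant loss can be absorbed by decreasing $\rho$ slightly. Everything else is routine: the pointwise identity $\bm{x}^{\intercal} M=\sum_j x_j R_j$, one triangle-type inequality, and Markov.
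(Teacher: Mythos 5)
Your argument is correct and reproduces, step for step, the Rudelson--Vershynin proof from \cite{RV08}, which is exactly what the paper cites for this lemma (it gives no proof of its own): the pointwise bound $\dist(R_k,H_k)\le\|\bm{x}^{\intercal}M\|/|x_k|$, the spreading property of incompressible vectors, and a single Markov bound on $N=|\{k:\dist(R_k,H_k)\le\eps\}|$. On the point you flagged: the small constant loss in the spreading claim disappears under the convention of \cite{RV08}, where $\Sparse(\de)$ consists of \emph{all} vectors of support size at most $\de n$ (not only unit ones) --- with that definition the vector obtained by zeroing the coordinates of $\bm{x}$ outside $K(\bm{x})$ is itself in $\Sparse(\de)$ and lies at distance at most $\rho$ from $\bm{x}$, so $|K(\bm{x})|\ge\de n$ holds with the exact threshold $\rho/\sqrt{n}$ and no renormalization step is needed.
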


The proof of \cref{thm:row regular} also makes use of the following result, which gives a good uniform lower bound for $\bm{x}^{\intercal}Q_n$ on the set of compressible vectors.

\begin{prop}\label{anticoncentration estimate}
There exist constants $\de,\rho, c_{10}\in (0,1)$ such that
\[
\Pr\Big\{\inf_{\bm{x}\in \Comp(\de,\rho)} \norm{\bm{x}^{\intercal}Q_n}\le \sqrt{n}/270\Big\} \le 2e^{-c_{10}n}.
\] 
\end{prop}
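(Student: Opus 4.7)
The plan is to adapt the net-based strategy of \cref{Invertibility constant vectors} to the transposed matrix, exploiting a key structural observation: the columns of $Q_n$ are mutually independent, each consisting of $n$ i.i.d.\ Bernoulli$(1/2)$ entries (immediate from independence of the rows of $Q_n$). Consequently $\bm{x}\mapsto \bm{x}^{\intercal}Q_n$ resembles multiplication by a matrix with i.i.d.\ entries, and one can employ classical techniques.

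First I would establish a single-vector lower bound,
$\Pr\bigl\{\norm{\bm{x}^{\intercal} Q_n} \le \sqrt{n}/10\bigr\} \le 2 e^{-c_1 n}$
for every fixed $\bm{x}\in \bS^{n-1}$ and an absolute $c_1>0$. Decomposing $\norm{\bm{x}^{\intercal}Q_n}^2=\sum_{j=1}^{n}X_j^2$ with $X_j:=\sum_i x_i Q_{ij}$, independence of the columns makes the $X_j$'s independent, each a weighted Bernoulli$(1/2)$ sum with $\bE X_j^2=\tfrac14+\tfrac14(\sum_i x_i)^2\ge 1/4$. By Hoeffding each $X_j-\bE X_j$ is sub-Gaussian with $\psi_2$-norm $O(1)$, hence $X_j^2-\bE X_j^2$ is sub-exponential with $\psi_1$-norm $O(1+(\sum_i x_i)^2)$; Bernstein's inequality (\cref{Bernstein}) applied to $\sum_j(X_j^2-\bE X_j^2)$ yields the claim. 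The same concentration paired with a $6^n$-net of $\bS^{n-1}\cap\cH$, exactly as in the proof of \cref{restricted norm}, gives $\lVert\left.Q_n^{\intercal}\right|_{\cH}\rVert_{\op}\le K\sqrt{n}$ with probability $\ge 1-e^{-c_2 n}$ for absolute $K, c_2>0$.

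Next, I would apply \cref{rounding} with $S=\Comp(\delta,\rho)$, parameter $\beta$, and matrix $A=Q_n^{\intercal}$. Using $\Comp(\delta,\rho)\subseteq\Sparse(\delta)+\rho B_2^n$, one has $N(\Comp(\delta,\rho),\beta B_2^n)\le \binom{n}{\delta n}(C/(\beta-\rho))^{\delta n}$ for $\beta>\rho$, so the resulting net $\cN$ has size at most $(2n+2)(Ce/(\delta(\beta-\rho)))^{\delta n}$ and, for every $\bm{x}\in\Comp(\delta,\rho)$, contains a $\bm{w}$ with
\[
\norm{(\bm{x}-\bm{w})^{\intercal}Q_n}\le \beta\bigl(2\lVert\left.Q_n^{\intercal}\right|_{\cH}\rVert_{\op}+\lVert Q_n\rVert_{\op}/n\bigr)\le \beta(2K\sqrt{n}+1),
\]
the last step using the deterministic bound $\lVert Q_n\rVert_{\op}\le \lVert Q_n\rVert_{\HS}=n/\sqrt{2}$ and the previous paragraph's event. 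Choose $\beta$ small enough that $\beta(2K+1)\le 1/100$, take $\rho=\beta/2$, and finally $\delta$ so small that $\delta\log(Ce/(\delta\beta))\le c_1/2$. Union-bounding the single-vector estimate over $\cN$ (applied to $\bm{w}/\norm{\bm{w}}$ and rescaled using $\norm{\bm{w}}\ge 1-2\beta\ge 1/2$) produces, on an event of probability $\ge 1-2e^{-c_1 n/2}$, the lower bound $\norm{\bm{w}^{\intercal}Q_n}\ge \sqrt{n}/30$ for every $\bm{w}\in\cN$. Intersecting with the restricted operator norm event and applying the triangle inequality, every $\bm{x}\in\Comp(\delta,\rho)$ then satisfies $\norm{\bm{x}^{\intercal}Q_n}\ge \sqrt{n}/30-\sqrt{n}/100\ge \sqrt{n}/270$.

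The only subtlety is that a compressible $\bm{x}$ need not lie in $\cH$, so bounding $\norm{(\bm{x}-\bm{w})^{\intercal}Q_n}$ by the restricted operator norm of $Q_n^{\intercal}$ alone is impossible: the unrestricted $\lVert Q_n\rVert_{\op}$ is of order $n$. \cref{rounding} precisely bypasses this by adjusting $\bm{w}$ along the $\bm{1}$-direction with step $\beta/\sqrt{n}$, forcing $|\product{\bm{x}-\bm{w},\bm{1}}|\le \beta/\sqrt{n}$, so that the contribution of $\lVert Q_n\rVert_{\op}$ enters only as the harmless factor $\lVert Q_n\rVert_{\op}/n=O(1)$.
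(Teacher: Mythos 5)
Your argument has a critical flaw at its foundation: the columns of $Q_n$ are \emph{not} mutually independent. What is true (and follows from independence of the rows) is that the $n$ entries within a single column are i.i.d.\ Bernoulli$(1/2)$. But distinct columns are correlated, because entries within a common row sum to exactly $n/2$; in the extreme case $n=2$, each row is either $(1,0)$ or $(0,1)$, so column $1$ determines column $2$. Consequently the variables $X_j=\sum_i x_i Q_{ij}$ are \emph{not} independent across $j$, and you cannot apply Bernstein to $\sum_j(X_j^2-\bE X_j^2)$ as if they were. This also invalidates the claim that $\lVert\left.Q_n^{\intercal}\right|_{\cH}\rVert_{\op}$ can be bounded ``exactly as in'' the proof of \cref{restricted norm}: that proof crucially uses independence of the \emph{rows} of the matrix in question, and the rows of $Q_n^{\intercal}$ are the columns of $Q_n$, which are dependent.

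The paper sidesteps both problems. For the single-vector anti-concentration bound it invokes \cref{lem:anti-conc} (proved in the appendix by Jain, Sah and Sawhney), whose proof restricts attention to the first $n/4$ columns and runs a \emph{conditional} argument: given the first $j-1$ columns with $j\le n/4$, the entries of column $j$ are still independent Bernoulli with parameter bounded in $[1/3,2/3]$, which feeds into a Paley--Zygmund/Laplace-transform estimate rather than plain Bernstein. For the net the paper uses the sharp-net theorem of Livshyts (\cref{sharp net}), which controls the approximation error $\norm{(\bm{x}-\bm{y})^{\intercal}Q_n}$ deterministically via $\lVert Q_n\rVert_{\HS}/\sqrt{n}=\sqrt{n/2}$, avoiding any need to estimate $\lVert\left.Q_n^{\intercal}\right|_{\cH}\rVert_{\op}$. (Incidentally, the restricted-operator-norm estimate you wanted is in fact true and can be reduced to the row version: since $Q_n\bm{1}=\tfrac{n}{2}\bm{1}$ one has $P_{\cH}Q_nP_{\bm{1}}=0$, hence $\lVert\left.Q_n^{\intercal}\right|_{\cH}\rVert_{\op}=\lVert P_{\cH}Q_n\rVert_{\op}=\lVert P_{\cH}Q_nP_{\cH}\rVert_{\op}\le\lVert\left.Q_n\right|_{\cH}\rVert_{\op}$ --- but that is an algebraic identity, not the concentration argument you sketched.) Without a correct replacement for the single-vector estimate, your proof does not go through.
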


Before proceeding with the proof of \cref{anticoncentration estimate}, we show how to deduce \cref{thm:row regular} from \cref{invertibility via distance} and \cref{anticoncentration estimate}.

\begin{proof}[Proof of \cref{thm:row regular}]
Consider the event
\[
\cE:=\Big\{\exists \bm{v}\in \bS^{n-1} \enskip \text{such that } \norm{Q_n\bm{v}} \le \eps\frac{\rho}{\sqrt{n}}\Big\}.	
\]
	Fix any realization of the matrix $Q_n$ such that the event holds, i.e. there exists a vector $\bm{v}\in \bS^{n-1}$ with $\norm{Q_n\bm{v}} \le \frac{\eps}{\sqrt{n}}$. Since $Q_n$ and its transpose have the same singular values, there is a vector $\bm{x} \in \bS^{n-1}$ such that $\norm{\bm{x}^{\intercal}Q_n} \le \frac{\eps}{\sqrt{n}}$. From this it follows that
	\begin{align*}
	\Pr(\cE) &\le \Pr\Big\{\inf_{\bm{x}\in \Comp(\de,\rho)} \norm{\bm{x}^{\intercal}Q_n}\le \eps \frac{\rho}{\sqrt{n}}\Big\}+\Pr\Big\{\inf_{\bm{x}\in \Incomp(\de,\rho)} \norm{\bm{x}^{\intercal}Q_n}\le \eps \frac{\rho}{\sqrt{n}}\Big\}\\
	&\le \Pr\Big\{\inf_{\bm{x}\in \Comp(\de,\rho)} \norm{\bm{x}^{\intercal}Q_n}\le \sqrt{n}/270\Big\}+\frac{1}{\de}\Pr\big\{\dist(R_n,H_n)\le \eps\big\}\\
	&\le 2e^{-c_{10}n}+\frac{1}{\de}(C\eps +2 e^{-cn}),
	\end{align*}
	where the second line follows from \cref{invertibility via distance}, and in the last passage \cref{anticoncentration estimate} and \cref{thm:distances} were used.
	 This completes our proof.
\end{proof}

The remainder of this section is devoted to a proof of \cref{anticoncentration estimate}. We recall a special case of Theorem 4 from \cite{Livshyts18}.

\begin{thm}[Sharp net for deterministic matrices]\label{sharp net}
Consider any $S\subset \bS^{n-1}$. Pick any $\al \in (0,\frac12)$, $\beta \in (0,\frac{\al}{10})$. Let $n\ge 1/\al^2$. There exists a (deterministic) net $\cN \subset S+\frac{4\beta}{\al}B_2^n$ with 
\[
\norm{\cN} \le N(S,\be B_2^n)\cdot e^{C_{11}\al^{0.08}\log(1/\alpha)n}
\]
such that for every $m\in \bN$ and for every (deterministic) $m\times n$ matrix $A$, the following holds: for every $\bm{x} \in S$ there exists $\bm{y}\in \cN$ satisfying
\[
\norm{(\bm{x}-\bm{y})^{\intercal}A} \le \frac{2\be}{\al \sqrt{n}} \lVert A\rVert_{\HS}.
\]
Here $C_{11}>0$ is an absolute constant.
\end{thm}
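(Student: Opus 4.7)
The argument follows the random rounding technique of Livshyts \cite{Livshyts18}. The plan has three parts: a standard coarse $\beta$-net handles the bulk of $S$; randomized rounding of the residual at scale $\sim\beta/(\alpha\sqrt{n})$ handles the fine-scale approximation; and a sparsity restriction on the rounded vector keeps the total cardinality well below the trivial lattice-point count.

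Let $\cF \subset \bR^n$ be any $\beta$-net of $S$ with $|\cF| = N(S, \beta B_2^n)$. For each $\bm{x} \in S$, fix $\bm{u} = \bm{u}(\bm{x}) \in \cF$ with $\norm{\bm{x} - \bm{u}} \le \beta$, and set $\bm{r} := \bm{x} - \bm{u}$. Define $\bm{z}$ by randomized rounding of $\bm{r}$ to $\tau\bZ^n$ with $\tau := \beta/(\alpha\sqrt{n})$: independently for each coordinate $i$, let $z_i$ be one of the two points of $\tau\bZ$ nearest $r_i$, with probabilities chosen so $\bE z_i = r_i$. The rounding errors $\xi_i := z_i - r_i$ are then independent, mean zero, and bounded by $\tau$, so for any fixed $m \times n$ matrix $A = (a_{ij})$,
\[
\bE \norm{(\bm{r} - \bm{z})^{\intercal} A}^2 = \sum_{i,j} \Var(\xi_i)\, a_{ij}^2 \le \tau^2 \lVert A\rVert_{\HS}^2 = \frac{\beta^2}{\alpha^2 n} \lVert A\rVert_{\HS}^2.
\]
Hence with positive probability a specific realization $\bm{z}$ achieves $\norm{(\bm{x} - \bm{y})^{\intercal} A} \le (2\beta/(\alpha\sqrt{n})) \lVert A\rVert_{\HS}$ for $\bm{y} := \bm{u} + \bm{z}$. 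Taking $\cN$ to be the (deterministic) set of all outputs $\bm{u} + \bm{z}$ as $\bm{u}$ ranges over $\cF$ and $\bm{z}$ over the support of the rounding produces an $A$-independent net with the desired approximation property. The inclusion $\cN \subset S + (4\beta/\alpha) B_2^n$ follows from $\norm{\bm{y} - \bm{x}} = \norm{\bm{z} - \bm{r}} \le \sqrt{n}\,\tau = \beta/\alpha$.

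The delicate step, and the one I expect to be the main obstacle, is the cardinality bound $|\cN| \le N(S,\beta B_2^n)\cdot e^{C\alpha^{0.08}\log(1/\alpha)n}$. Counting all lattice points in $\tau\bZ^n \cap (\beta/\alpha) B_2^n$ gives roughly $(C/\alpha)^n$, which is far too many. The improvement comes from restricting the refinement to be \emph{sparse}: only $s \sim \alpha^{0.08} n$ coordinates of $\bm{z}$ are allowed to be nonzero, the rest forced to $0$. Since $\norm{\bm{r}}^2 \le \beta^2$, only a few coordinates of $\bm{r}$ are large, so zeroing $\bm{z}$ on the bulk produces only a controlled extra contribution in the second-moment calculation (possibly handled via a preliminary coarser rounding step on the bulk). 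Once sparsity is enforced, enumeration reduces to choosing the support ($\binom{n}{s}$ ways) and the rounded values on it ($(C/\alpha)^s$ choices), giving $\binom{n}{s}(C/\alpha)^s \le e^{Cs\log(1/\alpha)} = e^{C\alpha^{0.08}\log(1/\alpha)n}$. Calibrating the sparsity threshold and the lattice scale to simultaneously meet the operator-norm target and the entropy budget is the core technical content, and is where the specific exponent $0.08$ enters.
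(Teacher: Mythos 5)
First, a point of reference: the paper does not prove this statement at all --- it is quoted verbatim as ``a special case of Theorem 4 from \cite{Livshyts18}'' and used as a black box. So there is no internal proof to compare against; what you have written is an attempted reconstruction of Livshyts' random-rounding theorem, which is a substantial result in its own right. The parts of your sketch that are solid are exactly the parts that are standard: the coarse $\be$-net $\cF$, the coordinate-wise randomized rounding to $\tau\bZ^n$ with $\tau=\be/(\al\sqrt{n})$ and $\bE\xi_i=0$, the identity $\bE\norm{(\bm{r}-\bm{z})^{\intercal}A}^2=\sum_i\Var(\xi_i)\norm{A_{i\cdot}}^2\le\tau^2\lVert A\rVert_{\HS}^2$, and the positive-probability/Markov step. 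These are correct, and they are indeed the opening moves of Livshyts' argument.

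The genuine gap is the cardinality bound, and your proposed fix does not close it. If you force $\bm{z}$ to vanish outside a set $\sigma$ of $s\sim\al^{0.08}n$ coordinates, the error on $\sigma^c$ is no longer a mean-zero rounding noise but the deterministic residual $\bm{r}|_{\sigma^c}$ itself. For a ``flat'' residual ($r_i\equiv\be/\sqrt{n}$), any choice of $\sigma$ with $|\sigma|=o(n)$ leaves $\norm{\bm{r}|_{\sigma^c}}\approx\be$, and then $\norm{(\bm{r}|_{\sigma^c})^{\intercal}A}$ can be as large as $\be\lVert A\rVert_{\op}$, which for a rank-one $A$ equals $\be\lVert A\rVert_{\HS}$ --- larger than the target $\frac{2\be}{\al\sqrt{n}}\lVert A\rVert_{\HS}$ by a factor of order $\al\sqrt{n}$. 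The hedge ``a preliminary coarser rounding step on the bulk'' fails for the same reason your own variance computation shows: rounding the bulk at granularity $\tau'$ contributes $\tau'^2\lVert A\rVert_{\HS}^2$ to the second moment, so you cannot take $\tau'$ materially larger than $\tau$, and at granularity $\tau$ the bulk already admits $2$ choices per coordinate, i.e.\ $2^n$ outputs, far exceeding $e^{C\al^{0.08}\log(1/\al)n}$. In short, the tension is real: unbiasedness forces you to round every coordinate at scale $\approx\tau$, while the naive count of reachable lattice points is then exponential in $n$ with an absolute constant. Resolving this is the actual content of \cite[Theorem 4]{Livshyts18}; it requires a genuinely finer lattice-point count (exploiting that the reachable points cluster near a ball of radius $\al\sqrt{n}$ in lattice units, together with a multi-scale decomposition of the residual), and the exponent $0.08$ emerges from optimizing that trade-off --- it is not produced by the support-size calibration you describe. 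As written, the proposal establishes the approximation property but not the entropy bound, so the theorem is not proved.
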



We also embrace the following anti-concentration estimate due to Jain, Sah and Sawhney (private communication), which answered Question 4.1 in the previous version of this manuscript.

\begin{lem}
	\label{lem:anti-conc}
	For each $\bm{x}\in \bS^{n-1}$, we have
	\[
	\Pr\big\{|\bm{x}^{\intercal}Q_n|\le \sqrt{n}/90\big\}\le e^{-n/3000}.
	\]
\end{lem}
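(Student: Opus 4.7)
\medskip
\noindent\textbf{Proof proposal.} The plan is to pass to the ``centered'' vector in the hyperplane $\cH$ and then combine a Paley--Zygmund lower bound on its expected norm with a McDiarmid-type concentration in the independent rows.

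\smallskip
\emph{Step 1 (orthogonal decomposition).} Let $B:=Q_n-\tfrac12\bm{1}\bm{1}^{\intercal}$ and set
$\tilde V:=\bm{x}^{\intercal}B=\sum_{i=1}^n x_i\bigl(R_i-\bm{1}/2\bigr)\in\bR^n$.
Because each row of $Q_n$ sums to $n/2$, every row of $B$ has sum zero, so $\langle\tilde V,\bm{1}\rangle=0$, i.e.\ $\tilde V\in\cH$. Writing $\bm{x}^{\intercal}Q_n=\tilde V+\tfrac12\langle\bm{x},\bm{1}\rangle\bm{1}$ and using Pythagoras gives
\[
\bigl|\bm{x}^{\intercal}Q_n\bigr|^2=|\tilde V|^2+\tfrac{n}{4}\langle\bm{x},\bm{1}\rangle^2\;\ge\;|\tilde V|^2,
\]
so it is enough to prove $\Pr\bigl(|\tilde V|\le\sqrt n/90\bigr)\le e^{-n/3000}$.

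\smallskip
\emph{Step 2 (second and fourth moments).} Since the rows $R_i$ are independent and each entry $R_{i,j}$ has $\mathrm{Ber}(1/2)$ marginal, for every fixed $j$ the variable $\tilde V_j=\sum_i x_i(R_{i,j}-1/2)$ is a sum of $n$ independent mean-zero $\pm1/2$-valued variables, whence $\bE\tilde V_j^2=1/4$ and $\bE|\tilde V|^2=n/4$. For the fourth moment I would expand
$\bE|\tilde V|^4=\sum_{j,k}\bE[\tilde V_j^2\tilde V_k^2]$ and evaluate each $\bE[\tilde V_j^2\tilde V_k^2]$ by classifying the four-fold row-index sum according to how the indices coincide: after using independence across rows and the single within-row correlation $\bE[(R_{i,j}-\tfrac12)(R_{i,k}-\tfrac12)]=-1/(4(n-1))$ for $j\ne k$, only the ``all four equal,'' ``aligned $2{+}2$,'' and ``crossed $2{+}2$'' patterns survive, the last contributing only $O(1/(n-1)^2)$ per term. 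Book-keeping then yields $\bE|\tilde V|^4\le n^2/16+O(n)=(\bE|\tilde V|^2)^2\bigl(1+O(1/n)\bigr)$. This is the calculationally heaviest step and the main obstacle of the proof.

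\smallskip
\emph{Step 3 (lower bound on $\bE|\tilde V|$).} Paley--Zygmund applied to $|\tilde V|^2$ gives, for every $\lambda\in(0,1)$,
\[
\Pr\bigl(|\tilde V|^2\ge\lambda n/4\bigr)\ge (1-\lambda)^2\cdot\bigl(1-O(1/n)\bigr).
\]
Integrating $\bE|\tilde V|=\int_0^\infty\Pr(|\tilde V|>s)\,ds$ over $s\in[0,\sqrt n/2]$ after the substitution $u=2s/\sqrt n$ and using $\int_0^1(1-u^2)^2\,du=8/15$ yields $\bE|\tilde V|\ge\tfrac{4}{15}\sqrt n\,(1-o(1))$. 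In particular $\bE|\tilde V|\ge\sqrt n/5$ once $n$ is large.

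\smallskip
\emph{Step 4 (McDiarmid and conclusion).} Viewed as a function of the independent rows $R_1,\dots,R_n$, the map $f(R_1,\dots,R_n):=|\tilde V|$ satisfies a bounded-differences condition: replacing $R_i$ by any other admissible $R_i'$ changes $\tilde V$ by the vector $x_i(R_i'-R_i)$, and any two $\{0,1\}$-vectors with $n/2$ ones each differ in at most $n$ coordinates, so $|R_i'-R_i|\le\sqrt n$ and $|f(\ldots,R_i,\ldots)-f(\ldots,R_i',\ldots)|\le|x_i|\sqrt n$. Hence $\sum_i(|x_i|\sqrt n)^2=n$, and McDiarmid's inequality gives
\[
\Pr\bigl(f\le\bE f-t\bigr)\le\exp\bigl(-2t^2/n\bigr).
\]
Choosing $t=\bE|\tilde V|-\sqrt n/90\ge(\tfrac15-\tfrac{1}{90})\sqrt n=\tfrac{17}{90}\sqrt n$ (by Step 3) produces an exponential bound of order $\exp(-n/14)$ on $\Pr(|\tilde V|\le\sqrt n/90)$, which is comfortably stronger than $e^{-n/3000}$. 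Finitely many small $n$ can be absorbed into the constants. The only nontrivial ingredient is the fourth-moment bookkeeping of Step 2; all other steps are standard.
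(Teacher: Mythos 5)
Your proof is correct, and it takes a genuinely different route from the paper's. Both proofs hinge on a Paley--Zygmund step, but you deploy it globally whereas the paper uses it locally: the published argument restricts to the first $n/4$ columns, conditions column by column (so that each $y_j$ given $\Gamma_1,\dots,\Gamma_{j-1}$ becomes a sum of independent Bernoullis with probabilities bounded away from $0$ and $1$), bounds the conditional fourth moment of $y_j$ via hypercontractivity, applies Paley--Zygmund per column to get a uniform lower bound on $\Pr\{y_j>1/3\}$, and finishes with a Laplace-transform (Chernoff) argument iterated over columns. You instead center $Q_n$ to $B=Q_n-\tfrac12\bm{1}\bm{1}^{\intercal}$, which both kills the mean and lands $\tilde V=\bm{x}^{\intercal}B$ in $\cH$ so that $|\bm{x}^{\intercal}Q_n|\ge|\tilde V|$, then compute $\bE|\tilde V|^2=n/4$ and (the one nontrivial bit) $\bE|\tilde V|^4\le(n^2/16)(1+O(1/n))$ directly by exploiting independence across rows and the explicit within-row covariance $-\tfrac{1}{4(n-1)}$; Paley--Zygmund on $|\tilde V|^2$ then gives $\bE|\tilde V|\ge\tfrac{4}{15}\sqrt n\,(1-O(1/n))\ge\sqrt n/5$, and McDiarmid in the independent rows (with bounded differences $c_i=|x_i|\sqrt n$, $\sum c_i^2=n$) closes the gap. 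I verified the fourth-moment bookkeeping: for $j\ne k$ only the "aligned $2{+}2$," "all four equal," and two "crossed $2{+}2$" patterns survive, giving $\bE[\tilde V_j^2\tilde V_k^2]=\tfrac1{16}+\tfrac{2}{16(n-1)^2}\sum_{i\ne i'}x_i^2x_{i'}^2$, which summed with $\sum_j\bE\tilde V_j^4\le 3n/16$ gives the claimed $1+O(1/n)$ ratio. Your approach is arguably cleaner conceptually ("mean estimate plus Lipschitz concentration") and yields a sharper exponent (roughly $e^{-n/14}$ versus the paper's $e^{-n/3000}$), at the cost of an explicit fourth-moment computation that the paper's hypercontractivity step sidesteps. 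One small caveat, shared with the paper: the statement as written has no lower bound on $n$, but it fails for very small $n$ (e.g.\ $n=2$, $\bm{x}=(1,-1)/\sqrt2$ makes the left side $1/4$), so both proofs implicitly take $n$ large enough; your note that finitely many small $n$ are absorbed into constants only works if the constants in the statement are allowed to change, which they are not, so strictly one needs an explicit $n\ge n_0$ hypothesis.
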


The proof of \cref{lem:anti-conc} will be given in the Appendix. We conclude this section with a proof of \cref{anticoncentration estimate}, employing \cref{lem:anti-conc}.

\begin{proof}[Proof of \cref{anticoncentration estimate}]
Set $\al=540(\de+\rho)$ and $\be=\de +\rho$, where $0<\de,\rho \ll 1$. Assume that $n \ge 1/\al^2$.
Observe that
\[
N(\Comp(\de,\rho),\be B_2^n) \le \binom{n}{\de n}\cdot \left(\frac{4}{\de}\right)^{\de n} \le e^{2\de \log(4/\de)n}.
\]
Applying \cref{sharp net} together with the above observation, we get a net $\cN \subset \frac32 B_2^n \setminus \frac12 B_2^n$ with
\[
\norm{\cN} \le  e^{2\de \log(4/\de)n} \cdot e^{C_{11}\al^{0.08}\log(1/\alpha)n} \le e^{C\al^{0.08}\log(1/\al)n}.
\]
For fixed $\bm{y}\in \cN$, \cref{lem:anti-conc} tells us that
\[
\Pr\big\{|\bm{y}^{\intercal}Q_n|\le \sqrt{n}/135\big\}\le e^{-n/3000}
\]
Taking the union bound, we then get
\[
\Pr\Big\{\inf_{\bm{y}\in \cN}|\bm{y}^{\intercal}Q_n|\le \sqrt{n}/135\Big\} \le e^{C\al^{0.08}\log(1/\al)n}\cdot e^{-n/3000}< e^{-n/4000}.
\]

Consider the event
\[
\cE:=\Big\{\inf_{\bm{x}\in \Comp(\de,\rho)} \norm{\bm{x}^{\intercal}Q_n}\le \sqrt{n}/270\Big\}.
\]
To bound $\Pr(\cE)$, we suppose that $\cE$ occurs. Then $\norm{\bm{x}^{\intercal}Q_n}\le \sqrt{n}/270$ for some $\bm{x}\in \bS^{n-1}$. Since $\lVert Q_n\rVert_{\HS} \le n$, \cref{sharp net} shows the existence of $\bm{y}\in \cN$ with
\[
\norm{(\bm{x}-\bm{y})^{\intercal}Q_n} \le (2\be/\al)\sqrt{n} \le \sqrt{n}/270. 
\]
In particular, one has
\[
\norm{\bm{y}^{\intercal}Q_n} \le \norm{\bm{x}^{\intercal}Q_n}+\norm{(\bm{x}-\bm{y})^{\intercal}Q_n} \le \sqrt{n}/135.
\]
Therefore, we find
\[
\Pr(\cE) \le \Pr\big\{|\bm{y}^{\intercal}Q_n|\le \sqrt{n}/135\big\}\le e^{-n/4000}
\]
for $n$ sufficiently large, which completes our proof. 
\end{proof}

\section{Anti-concentration for combinatorial statistics}
\label{sec:small ball probability}
In this section we will derive \cref{binomial} from a more general result, namely \cref{thm:L-O}. We will prove \cref{binomial} in \cref{sec:general distributions to binomial} assuming the validity of \cref{thm:L-O}. We justify \cref{thm:L-O} in \cref{sec:anti-concentration}.

Let $\bm{a}$ and $\bm{v}$ be two vectors in $\bR^n$. The combinatorial statistic
\[
W_{\bm{a},\bm{v}}:=a_1 v_{\sigma(1)}+\ldots+a_n v_{\sigma(n)},
\]
where $\sigma$ is a uniformly random permutation of $[n]$, plays a fundamental role in statistics (see the book \cite{Good06} for an overview) as well as probability (see e.g. \cite{Cook17,FJLS20,Jain20,KST19,LLTTY17,LLTTY19,Nguyen13,NV13}). 
In analogy with the least common denominator (LCD) developed by Rudelson and Vershynin \cite{RV08}, we define a combinatorial version of LCD, which will be instrumental in controlling the anti-concentration of $W_{\bm{a},\bm{v}}$.

\begin{defn}[Combinatorial least common denominator]
Given two vectors $\bm{a}$ and $\bm{v}$ in $\bR^n$, as well as parameters $L,u>0$, the Combinatorial Least Common Denominator of the pair $(\bm{a},\bm{v})$ is
\[
\CLCD_{L,u}^{\bm{a}}(\bm{v}):=\inf\Big\{\theta>0\colon \dist(\theta\cdot \D(\bm{a})\otimes \D(\bm{v}),\bZ^{\binom{n}{2}^2})<\min\Big(u\norm{\D(\bm{a})\otimes \D(\bm{v})}, L\Big)\Big\}.
\]
Here by $\otimes$ we denote the tensor product.\footnote{In particular, $\D(\bm{a})\otimes \D(\bm{v})$ is a vector in $\bR^{\binom{n}{2}}$ whose $(i,j,k,\ell)$-coordinate is $(a_i-a_j)(v_k-v_{\ell})$, for $1\le i <j\le n$ and $1\le k<\ell\le n$.}
\end{defn}

The usefulness of $\CLCD$ is demonstrated in the following result, which shows how $\CLCD$ of the pair $(\bm{a},\bm{v})$ governs the small ball probability of $W_{\bm{a},\bm{v}}$.

\begin{thm}[Small ball probability]\label{thm:L-O}
Let $\bm{a}$ and $\bm{v}$ be two vectors in $\bR^n$ with $\norm{\D(\bm{a})\otimes \D(\bm{v})} \ge b n^{3/2}$ for some $b>0$. Let $L>0$ and $u\in (0,1)$. Then for any $\eps \ge 0$, we have
\[
\cL(W_{\bm{a},\bm{v}},\eps) \le C\eps +\frac{C}{\CLCD_{L,u}^{\bm{a}}(\bm{v})}+C e^{-8L^2/n^3}.
\]
The constant $C>0$ here depends only on $b$ and $u$.	
\end{thm}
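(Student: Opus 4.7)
The plan is to follow the classical Hal\'asz approach, adapted to permutation statistics via a characteristic function estimate of Roos. First, I would apply Esseen's concentration inequality to reduce the problem to an $L^{1}$ estimate of the characteristic function $\varphi(t):=\bE\exp(it W_{\bm{a},\bm{v}})$, namely
$$\cL(W_{\bm{a},\bm{v}},\eps)\lesssim \eps\int_{-1/\eps}^{1/\eps}\norm{\varphi(t)}\,dt.$$
This reduces the small ball problem for $W_{\bm{a},\bm{v}}$ to controlling $\varphi$ uniformly on a long interval.

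Next, I would invoke Roos's bound on the characteristic function of a permutation statistic, which yields an inequality of the form
$$\norm{\varphi(t)}^{2}\le \exp\Big(-\tfrac{c}{n^{3}}\sum_{i<j,\,k<\ell}\sin^{2}\!\big(\pi t(a_{i}-a_{j})(v_{k}-v_{\ell})\big)\Big),$$
and after using the elementary inequality $\sin^{2}(\pi x)\gtrsim \dist(x,\bZ)^{2}$, this rewrites as
$$\norm{\varphi(t)}\le \exp\Big(-\tfrac{c}{n^{3}}\,\dist\big(t\cdot \D(\bm{a})\otimes\D(\bm{v}),\,\bZ^{\binom{n}{2}^{2}}\big)^{2}\Big),$$
precisely the tensor-product form tailored to the combinatorial LCD.

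With this bound in hand, set $T:=\CLCD_{L,u}^{\bm{a}}(\bm{v})$ and split the integration window at $\norm{t}=T$. For $\norm{t}\le T$, the definition of CLCD together with the hypothesis $\norm{\D(\bm{a})\otimes\D(\bm{v})}\ge bn^{3/2}$ splits into two subcases: either $\dist \gtrsim u\norm{t}\cdot bn^{3/2}$, producing Gaussian decay with variance $\asymp 1/(ub)^{2}$ whose integral over $\bR$ is $\lesssim 1/(ub)$, or $\dist \gtrsim L$, producing the exponentially small bound $e^{-cL^{2}/n^{3}}$. The range $\norm{t}>T$ is not directly controlled, but this regime can be bypassed: first prove the conclusion when $\eps\ge 1/T$, in which case the integration window is contained in $[-T,T]$ and the two subcases above yield $\cL\lesssim \eps/(ub)+e^{-cL^{2}/n^{3}}$; then deduce the case $\eps<1/T$ by monotonicity of $\cL(W_{\bm{a},\bm{v}},\cdot)$, which inserts the missing $1/\CLCD$ term.

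The main obstacle I anticipate is establishing the Roos-style tensor estimate in the precise shape needed. Because $W_{\bm{a},\bm{v}}$ depends on a uniformly random permutation rather than independent coordinates, the characteristic function does not factorize and one cannot simply take a product of cosines; instead one must use a Hoeffding-style decomposition of the permutation statistic (or a symmetrization over random swaps) to extract a suitable product of trigonometric factors indexed by the pairs $(i,j)$ and $(k,\ell)$. Extracting exactly the exponent $1/n^{3}$, so that the Gaussian variance combines cleanly with the hypothesis $\norm{\D(\bm{a})\otimes\D(\bm{v})}\ge bn^{3/2}$, and producing exactly the tensor $\D(\bm{a})\otimes\D(\bm{v})$ on which the combinatorial LCD is defined, is the delicate step; once that estimate is available, the Hal\'asz-style split described above is routine.
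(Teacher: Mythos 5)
Your proposal is correct and follows essentially the same route as the paper: Ess\'een's inequality, Roos's characteristic-function bound, the elementary estimates $\cos^2\pi z\le 1-4\dist^2(z,\bZ)$ and $1-z\le e^{-z}$ to obtain the tensor-product exponential, the split of $\dist(\tfrac{\theta}{\eps}\D(\bm{a})\otimes\D(\bm{v}),\bZ^{\binom{n}{2}^2})\ge\min(u|\cdot|,L)$ dictated by the CLCD definition, and finally monotonicity of the concentration function to pass from $\eps\ge 1/\CLCD$ to all $\eps\ge 0$. The only place where you hedge --- the Roos-style tensor estimate --- is not actually an obstacle: the precise inequality you write down, with the average of $\cos^2(\pi\theta(a_i-a_j)(v_k-v_\ell))$ over pairs of pairs raised to the power $(n-1)/4$, is exactly Roos's Theorem 1.1 (cited as \cref{characteristic function} in the paper), so no Hoeffding-decomposition or symmetrization argument needs to be reconstructed; one simply quotes the result and then applies the pointwise bounds to pass to the exponential form with the $1/n^3$ prefactor, as you anticipated.
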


\subsection{Deriving \cref{binomial} from \cref{thm:L-O}}
\label{sec:general distributions to binomial}

In this section we formally derive \cref{binomial} from \cref{thm:L-O}. For the reader's convenience, we restate \cref{binomial}.

\binomial*

\begin{proof}[Proof of \cref{binomial} assuming \cref{thm:L-O}]
	Let $\bm{a}:=(\underbrace{1,\ldots,1}_\text{$n/2$},0,\ldots,0) \in \{0,1\}^n$, $L:=\al n/2$ and $u:=\ga$. We can interpret $\D(\bm{a})\otimes\D(\bm{v})$ as a collection of $n^2/4$ copies of $\D(\bm{v})$. Thus we have that
	\[
	\norm{\D(\bm{a})\otimes \D(\bm{v})}=\tfrac12 n\norm{\D(\bm{v})} \ge \tfrac12 b n^{3/2},
	\]
	and that
	\[
	\CLCD^{\bm{a}}_{L,u}(\bm{v})=\CLCD_{\al,\ga}(\bm{v}).
	\]
	Hence \cref{thm:L-O} is applicable. Noting that $W_{\bm{a},\bm{v}}$ and $W_{\bm{v}}$ have the same law, we then get
	\[
	\cL(W_{\bm{v}},\eps)=\cL(W_{\bm{a},\bm{v}},\eps) \le C\eps +\frac{C}{\CLCD_{\al,\ga}(\bm{v})} + C e^{-2\al^2/n}. \qedhere
	\]
\end{proof}

\subsection{Proof of \cref{thm:L-O}}
\label{sec:anti-concentration}

The proof is closely modeled after \cite[Theorem 4.1]{RV08}.
We first recall the following anti-concentration inequality due to Ess\'een (see e.g. \cite{Esseen, RV08}).

\begin{lem} \label{Esseen} 
	Given a random variable $\xi$ with the characteristic function $\varphi(\cdot)=\bE  \exp(2\bm{\pi} \bm{i}\xi\cdot)$, one has
	\begin{equation*}\label{eqn:esseen} 
	\cL(\xi,\eps)\lesssim \int_{-1}^{1}\norm{\varphi\left(\frac{\theta}{\eps}\right)}\,d\theta, \quad \eps \ge 0.
	\end{equation*}	
\end{lem}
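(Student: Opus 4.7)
The plan is a standard Fourier-analytic smoothing argument. Since $|\varphi|$ is invariant under translation of $\xi$, it suffices to prove, for a fixed (arbitrary) $x \in \bR$, the bound
\[
\Pr\{|\xi - x| < \eps\} \lesssim \int_{-1}^{1} \left|\varphi\!\left(\tfrac{\theta}{\eps}\right)\right| d\theta,
\]
with an absolute implied constant, and then take the supremum over $x$.

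The first step is to construct a ``Fourier majorant'' of $\one_{(-1,1)}$. Explicitly, the plan is to set $f(y) := \tfrac{\pi^2}{4}\bigl(\sin(\pi y/2)/(\pi y/2)\bigr)^2$. This is a nonnegative Schwartz function; since $y\mapsto (\sin(\pi y/2)/(\pi y/2))^2$ is decreasing on $[0,1]$ with value $(2/\pi)^2$ at $y=\pm 1$, we have $f(y) \ge 1$ for all $|y| < 1$. Moreover, $f$ is (up to the normalizing constant) a rescaled Fejér kernel, so its Fourier transform $\widehat{f}$ is supported in $[-1,1]$ and obeys $\lVert\widehat{f}\rVert_\infty \le C_0$ for an absolute constant $C_0$.

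With $f$ in hand, the second step is pointwise domination: for every realization of $\xi$,
\[
\one_{\{|\xi - x| < \eps\}} \le f\!\left(\frac{\xi - x}{\eps}\right).
\]
Applying Fourier inversion $f(y) = \int \widehat{f}(t) e^{2\pi i y t}\,dt$, substituting $y = (\xi - x)/\eps$, taking expectations, and exchanging the order of integration (which is justified because $\widehat{f}$ is bounded and compactly supported), one obtains
\[
\Pr\{|\xi - x| < \eps\} \le \bE f\!\left(\frac{\xi - x}{\eps}\right) = \int_{-1}^{1} \widehat{f}(t)\, e^{-2\pi i x t/\eps}\, \varphi\!\left(\tfrac{t}{\eps}\right) dt,
\]
where the integration domain collapses to $[-1,1]$ by the support condition on $\widehat{f}$. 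Taking absolute values inside the integral and using $\lVert\widehat f\rVert_\infty \le C_0$ gives the claimed inequality, uniformly in $x$.

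The main obstacle is the first step, namely producing a nonnegative majorant of $\one_{(-1,1)}$ whose Fourier transform is supported in $[-1,1]$ with controlled $L^\infty$ norm; once such a test function is identified, the remaining two steps are routine applications of Fourier inversion and Fubini. The Fejér kernel is the classical choice that achieves all three constraints simultaneously, so the construction is essentially forced and the bookkeeping of constants is the only technical matter.
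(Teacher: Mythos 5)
Your argument is correct. Note that the paper does not prove this lemma at all --- it is quoted as a classical result of Ess\'een (with the references \cite{Esseen, RV08}) --- so there is no in-paper proof to compare against; what you have written is the standard self-contained derivation via a Fej\'er-kernel majorant, which is essentially Ess\'een's original smoothing argument. The details check out: with $f(y)=\tfrac{\pi^2}{4}\bigl(\sin(\pi y/2)/(\pi y/2)\bigr)^2$ one has $f\ge \one_{(-1,1)}$ because $\sin(u)/u$ decreases on $[0,\pi/2]$ and equals $2/\pi$ at $u=\pi/2$; the Fourier transform is the triangle $\widehat f(t)=\tfrac{\pi^2}{2}(1-2|t|)_+$, which is in fact supported in $[-\tfrac12,\tfrac12]\subset[-1,1]$ with $\lVert\widehat f\rVert_\infty=\pi^2/2$, so your inversion-plus-Fubini step yields the bound with absolute constant $\pi^2/2$, uniformly in the centering $x$. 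The only (harmless) degenerate point is $\eps=0$, where the left-hand side vanishes and the statement is vacuous.
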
 

To use Ess\'een's lemma, we require the following critical estimate for the characteristic function of the statistic $W_{\bm{a},\bm{v}}$.

\begin{thm}[{Roos \cite[Theorem 1.1]{Roos19}}]\label{characteristic function}
	Let $\bm{a}=(a_1,\ldots,a_n)$ and $\bm{v}=(v_1,\ldots,v_n)$ be vectors in $\bR^n$. Let $\vp$ be the characteristic function of $W_{\bm{a},\bm{v}}$. Then 
	\begin{equation*}
	|\vp(\theta)| \le \Bigg[\frac{1}{\binom{n}{2}^2}\sum_{\{s,t\},\{p,q\}\in \binom{[n]}{2}} \cos^2\bm{\pi}\theta (a_{s}-a_{t})(v_{p}-v_{q})\Bigg]^{(n-1)/4}, \quad \theta \in \bR.
	\end{equation*}
\end{thm}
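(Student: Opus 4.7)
The plan is to follow the approach of Roos and prove the bound by conditioning on ``swap choices'' inside a random pair-partition of $[n]$. The starting point is a \emph{pair-swap identity}: fix any pair $\{s,t\}\in \binom{[n]}{2}$ and condition on $\sigma\big|_{[n]\setminus\{s,t\}}$, which determines the residual statistic $R:=\sum_{k\ne s,t}a_k v_{\sigma(k)}$ and the unordered image pair $\{p,q\}:=\{\sigma(s),\sigma(t)\}$. Conditionally, the ordering of $(\sigma(s),\sigma(t))$ is uniform over the two choices $(p,q)$ and $(q,p)$, so averaging that binary choice and using the identity $\tfrac12(e^{ix}+e^{iy})=e^{i(x+y)/2}\cos((x-y)/2)$ yields
\[
\vp(\theta)=\bE\!\left[e^{2\pi i\theta R}\cdot e^{i\pi\theta(a_s+a_t)(v_p+v_q)}\cdot\cos\bigl(\pi\theta(a_s-a_t)(v_p-v_q)\bigr)\right].
\]

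To turn this into a product structure, I would apply the conditioning simultaneously across a fixed partition of $[n]$ into $\lfloor n/2\rfloor$ input pairs $\{s_i,t_i\}$. Given the uniform random bijection between input pairs and the pairs $\{p_i,q_i\}$ of a uniform random perfect matching of $[n]$, the orderings within each pair are conditionally independent, so the conditional characteristic function factors into $\lfloor n/2\rfloor$ cosines times unit phases. Taking absolute values and applying Jensen's inequality then gives
\[
|\vp(\theta)|^{2}\ \le\ \bE\!\left[\prod_{i=1}^{\lfloor n/2\rfloor}\cos^{2}\!\bigl(\pi\theta(a_{s_i}-a_{t_i})(v_{p_i}-v_{q_i})\bigr)\right],
\]
where the outer expectation runs over the random matching (and bijection). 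Since $|\vp(\theta)|$ does not depend on how the input indices are paired, I can further symmetrize the right-hand side over all input pair-partitions, placing the $\{s_i,t_i\}$'s and $\{p_i,q_i\}$'s on the same symmetric footing.

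The crux is then to bound this product over a random matching by $\Psi^{(n-1)/2}$, where $\Psi:=\binom{n}{2}^{-2}\sum_{\{s,t\},\{p,q\}}\cos^{2}(\pi\theta(a_s-a_t)(v_p-v_q))$. Since the cosine factors are \emph{not} independent (the matching couples them), one cannot bound $\bE[\prod\cos^2_i]$ by $\prod\bE[\cos^2_i]$ directly. My plan would be an induction on $n$: peel off one matching edge by conditioning on the other $\lfloor n/2\rfloor-1$ edges, and relate the $n$-dimensional bound to the $(n-2)$-dimensional one with a compatible symmetric average, carefully tracking how the marginal of each pair in a uniform random perfect matching differs from uniform sampling on $\binom{[n]}{2}$. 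The exponent $(n-1)/2$---rather than the naive $n/2$ one would get from pretending independence---is the main obstacle: it reflects the precise combinatorial loss between a uniform random perfect matching on $[n]$ and i.i.d.\ pair samples, and matching this loss exactly is the delicate accounting at the heart of Roos's argument. Once this is achieved, extracting the square root recovers the claimed bound $|\vp(\theta)|\le \Psi^{(n-1)/4}$.
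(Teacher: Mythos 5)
There is a genuine gap, and you have in effect flagged it yourself. The first half of your argument is fine: the pair-swap identity is correct (conditioning on the set-images of a fixed pairing of $[n]$ does make the within-pair orderings independent uniform bits, and the half-angle identity produces exactly the factors $\cos(\pi\theta(a_{s_i}-a_{t_i})(v_{p_i}-v_{q_i}))$), and Cauchy--Schwarz legitimately converts $|\vp(\theta)|$ into $\bE\bigl[\prod_i\cos^2(\cdot)\bigr]^{1/2}$ with the expectation over the random image matching. But the entire content of the theorem is the step you then defer: showing that this expectation of a product of \emph{dependent} $\cos^2$ factors is at most $\Psi^{(n-1)/2}$, where $\Psi$ is the average over all $\binom{n}{2}^2$ pairs of pairs. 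You describe an intended induction (``peel off one matching edge\ldots carefully tracking how the marginal of each pair in a uniform random perfect matching differs from uniform sampling'') but do not carry it out, and you explicitly acknowledge that ``matching this loss exactly is the delicate accounting at the heart of Roos's argument.'' A plan for the crux is not a proof of the crux; as written, no inequality connecting $\bE[\prod_i\cos^2_i]$ to $\Psi$ has been established, so the proposal does not prove the statement.

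For context: the paper itself does not prove this theorem either --- it is imported verbatim as Theorem~1.1 of Roos \cite{Roos19}. Roos's actual argument is not the matching-decomposition route you sketch; he proves a Hadamard-type upper bound for permanents (and hafnians), applied to the observation that $\vp(\theta)=\frac{1}{n!}\,\mathrm{per}\bigl(e^{2\pi\bm{i}\theta a_j v_k}\bigr)_{j,k}$, and for unimodular entries the relevant $2\times 2$ permanent moduli reduce to the $\cos^2$ averages appearing in the statement. That linear-algebraic route is precisely what supplies the exponent $(n-1)/4$ that your combinatorial accounting would need to reproduce. If you want a self-contained proof along your lines, the missing lemma --- an exact or one-sided comparison between a uniform perfect matching and i.i.d.\ pair samples strong enough to yield the exponent $(n-1)/2$ before the square root --- is the piece you must actually supply.
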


We are now in a position to prove \cref{thm:L-O}.

\begin{proof}[Proof of \cref{thm:L-O}]
	Take any $\eps\ge 1/\CLCD^{\bm{a}}_{L,u}(\bm{v})$. From \cref{characteristic function} we know that
	\[
	\norm{\varphi\Bigg(\frac{\theta}{\eps}\Bigg)}\le \Bigg[\frac{1}{\binom{n}{2}^2} \sum\limits_{\{s,t\},\{p,q\}\in \binom{[n]}{2}}\cos^2\Big(\frac{ \bm{\pi}\theta(a_s-a_t)(v_p-v_q)}{\eps}\Big)\Bigg]^{(n-1)/4}.
	\]
	
	By convexity, we have that $|\sin \bm{\pi} z| \ge 2 \dist(z,\bZ)$ for any $z\in\bR$. Thus
	\begin{equation*}
	\cos^2 \bm{\pi} z =1-\sin^2 \bm{\pi} z \le 1-4 \dist^2(z,\bZ).
	\end{equation*}
	
	It follows that
	\begin{align*}
	\norm{\varphi\left(\frac{\theta}{\eps}\right)} &\le \Bigg[1-\frac{4 }{\binom{n}{2}^2} \sum_{\{s,t\},\{p,q\}\in \binom{[n]}{2}}\dist^2\Big(\frac{\theta}{\eps}\, (a_s-a_t)(v_p-v_q),\bZ\Big)\Bigg]^{(n-1)/4}\\
	&\le \exp\Bigg(- \frac{8}{n^3}\sum_{\{s,t\},\{p,q\}\in \binom{[n]}{2}}\dist^2\Big(\frac{\theta}{\eps}\, (a_s-a_t)(v_p-v_q),\bZ\Big)\Bigg)\\
	&= \exp\left(-\frac{8}{n^3}\ \dist^2\Big(\frac{\theta}{\eps}\, \D(\bm{a})\otimes \D(\bm{v}),\bZ^{\binom{n}{2}^2
	}\Big)\right),
	\end{align*}
	where in the second inequality we used the fact that $1-z\le e^{-z}$ for any $z\in \bR$.
	
	Combining this with \cref{Esseen} gives
	\begin{align*}
	\notag \cL(W_{\bm{a},\bm{v}},\eps) &\lesssim \int_{-1}^{1}\norm{\varphi\left(\frac{\theta}{\eps}\right)}\,d\theta\\
	&\lesssim \int_{-1}^{1}\exp\left(-8 h^2(\theta)/n^3\right)\,d\theta,
	\end{align*}
	where $h(\theta):=\dist(\frac{\theta}{\eps}\cdot \D(\bm{a})\otimes \D(\bm{v}),\bZ^{\binom{n}{2}^2
	})$.
	
	Since $1/\eps \le \CLCD^{\bm{a}}_{L,u}(\bm{v})$, it follows that for any $\theta \in [-1,1]$ we have
	\[
	h(\theta) \ge \min\Big(u\Big|\frac{\theta}{\eps}\cdot\D(\bm{a})\otimes\D(\bm{v})\Big|,L\Big)\ge \min\Big(\frac{ubn^{3/2}\norm{\theta}}{\eps},L\Big),
	\]
	assuming that $\norm{\D(\bm{a})\otimes \D(\bm{v})} \ge b n^{3/2}$. Therefore,
	\begin{align*}
	\cL(W_{\bm{a},\bm{v}},\eps) &\lesssim \int_{-1}^{1} \left[\exp\left(-8(ub\theta/\eps)^2\right)+\exp(-8L^2/n^3)\right]\,d\theta\\
	& \lesssim \frac{\eps}{ub}+\exp(-8L^2/n^3). \qedhere
	\end{align*}
\end{proof}

\section{Concluding remarks}
\label{sec:concluding remarks}

In this section we highlight some possible avenues for further investigation.

\subsection{Exchangeable random matrices.}
Let $M_n$ be a random $n\times n$ matrix. One source of motivation for finding good lower bounds on the least singular value $s_n(M_n)$ is its relation to the problem of proving the circular law for the distribution of eigenvalues of $M_n$. A model of random matrices, which is most relevant to us, was introduced in \cite{ACW16}.

Let $(a_{ij})_{1\le i,j \le n}$ be a deterministic real matrix such that 
\[
\sum_{i,j=1}^{n}a_{ij}=0 \enskip \text{and} \enskip \sum_{i,j=1}^n a_{ij}^2=n^2.
\]
We consider the exchangeable random matrix $M_n$ obtained by shuffling the deterministic matrix $(a_{ij})$ using a random uniform permutation, i.e.,
\[
M_n=(a_{\sigma(i,j)})_{1\le i \le j \le n}, \enskip \text{where} \enskip \sigma \enskip \text{is a uniformly random permutation of the set} \enskip \{(i,j)\colon 1\le i,j \le n\}.
\] 
Such random matrices have dependent entries, dependent rows, and dependent columns. In order to prove the circular law for $M_n$, Adamczak, Chafai and Woff \cite[Theorem 1.1]{ACW16} established a polynomial bound on the smallest singular value of the shifted matrix $\frac{1}{\sqrt{n}}M_n-z I$:
\[
\Pr\Big\{s_n\Big(\frac{1}{\sqrt{n}}M_n-zI\Big)\le \frac{\eps}{\sqrt{n}}\Big\} \lesssim_{K,z} \eps + \frac{1}{\sqrt{n}} \quad \text{for every } z\in \bC \enskip \text{and } \eps \ge 0,
\]
where $K:=\max_{i, j}|a_{ij}|$. They asked whether the factor $1/\sqrt{n}$ in the above estimate can be improved to $e^{-cn}$. We believe that our techniques (combined with additional twists) should allow one to solve this problem.

\subsection{Inverse Littlewood-Offord theory}
Given two vectors $\bm{a}$ and $\bm{v}$ in $\bR^{n}$, we define the {\em concentration probability} as

\[
\rho_{\bm{a},\bm{v}}:=\sup_{x}\Pr(W_{\bm{a},\bm{v}}=x).
\]

S{\"o}ze \cite[Corollary 5]{Soze17} showed $\rho_{\bm{a},\bm{v}} \lesssim \frac{1}{n}$ assuming that $\bm{a}=(1,2,\ldots,n)$ and that $\bm{v}$ is a non-constant vector, and used this estimate to bound the expected number of real roots of random polynomials with exchangeable coefficients. Motivated by their study of representations of reductive groups, Huang, McKinnon and Satriano \cite{HMS20} recently raised the problem of bounding $\rho_{\bm{a},\bm{v}}$ when $\bm{a}$ has distinct coordinates and $\bm{v}$ is a non-constant vector. Under these assumptions they showed that $\rho_{\bm{a},\bm{v}} \lesssim \frac{1}{n}$, which generalizes S{\"o}ze's result. Shortly after the Huang-McKinnon-Satriano paper, Pawlowski \cite{Paw20} gave a combinatorial proof of the refinement

\begin{equation}\label{eq:Pawlowski}
\rho_{\bm{a},\bm{v}} \le \frac{2\lfloor n/2\rfloor}{n(n-1)}.
\end{equation}

This estimate is sharp, as demonstrated by $\bm{a}=(1,\ldots,n)$ and $\bm{v}=(-\sum\limits_{i=2}^{n-1}i,-\sum\limits_{i=2}^{n-1}i, n+1,\ldots, n+1)$.
It is likely that \eqref{eq:Pawlowski} can be improved significantly by making additional assumptions about $\bm{a}$ and $\bm{v}$. Phrased differently, it would be very interesting to find an answer to the following basic question:

\begin{ques}\label{ques2}
What is the underlying reason why $\rho_{\bm{a},\bm{v}}$ could be large?
\end{ques}

We remark that Nguyen and Vu \cite[Theorem 4.4]{NV13} gave a partial answer to the above question when $\bm{a}=(\underbrace{1,\ldots,1}_\text{$n/2$},0,\ldots,0) \in \{0,1\}^n$.

Another interesting direction is to extend Pawlowski's result to other ambient groups. For a detailed account of the Inverse Littlewood-Offord theory, we refer the reader to an excellent survey of Nguyen and Vu \cite{NV13-survey}.

\section*{Acknowledgement}
We thank Vishesh Jain, Ashwin Sah and Mehtaab Sawhney for their assistance with the proof of \cref{lem:anti-conc}, and Prof. Bero Roos for pointing us to the reference \cite{Roos19}.

\appendix
\section{(by Jain, Sah and Sawhney): Proof of \cref{lem:anti-conc}}

We first recall the following variant of Paley-Zygmund inequality given by Litvak, Pajor, Rudelson and Tomczak-Jaegermann \cite[Lemma 3.5]{LPRT05}.

\begin{lem}\label{lem:paley-zygmund}
Let $X$ be a nonnegative random variable with $\bE[X^4]<\infty$. Then for $0\le \lambda <\sqrt{\bE[X^2]}$ we have 
	\[
	\Pr\big\{X>\lambda\big\}\ge \frac{(\bE[X^2]-\lambda^2)^2}{\bE[X^{4}]}.\]
\end{lem}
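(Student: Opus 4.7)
The plan is to apply the Paley--Zygmund inequality to the random variable $X^2$, which after a Cauchy--Schwarz step yields exactly the stated bound. The key observation is that the denominator $\bE[X^4]$ on the right-hand side is the second moment of $X^2$, while the numerator is (the square of) the expected excess $\bE[X^2] - \lambda^2$; this is the classic paired shape that comes out of splitting an expectation on $\{X\le \lambda\}$ versus $\{X>\lambda\}$ and then applying Cauchy--Schwarz on the second piece.

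Concretely, first I would decompose
\[
\bE[X^2] \;=\; \bE\bigl[X^2 \mathbf{1}_{\{X\le \lambda\}}\bigr] + \bE\bigl[X^2 \mathbf{1}_{\{X> \lambda\}}\bigr],
\]
and use the pointwise bound $X^2 \le \lambda^2$ on the event $\{X\le \lambda\}$, which is permissible since $X\ge 0$. This gives $\bE[X^2\mathbf{1}_{\{X\le \lambda\}}] \le \lambda^2$, and therefore
\[
\bE\bigl[X^2 \mathbf{1}_{\{X> \lambda\}}\bigr] \;\ge\; \bE[X^2] - \lambda^2,
\]
where the right-hand side is strictly positive by the assumption $\lambda < \sqrt{\bE[X^2]}$.

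Second, I would apply the Cauchy--Schwarz inequality to the indicator factorization $X^2 \mathbf{1}_{\{X>\lambda\}} = X^2 \cdot \mathbf{1}_{\{X>\lambda\}}$:
\[
\bE\bigl[X^2 \mathbf{1}_{\{X>\lambda\}}\bigr] \;\le\; \bigl(\bE[X^4]\bigr)^{1/2}\bigl(\bE[\mathbf{1}_{\{X>\lambda\}}]\bigr)^{1/2} \;=\; \sqrt{\bE[X^4]}\cdot \sqrt{\Pr\{X>\lambda\}}.
\]
Combining the two displays yields
\[
\sqrt{\Pr\{X>\lambda\}} \;\ge\; \frac{\bE[X^2]-\lambda^2}{\sqrt{\bE[X^4]}},
\]
and squaring both sides (valid since both are nonnegative) gives the claim. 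The assumption $\bE[X^4]<\infty$ ensures the Cauchy--Schwarz step is meaningful and that we are not dividing by an infinite quantity.

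There is essentially no obstacle here: the proof is two lines once one recognizes this is Paley--Zygmund applied to $X^2$. The only minor subtleties are checking that nonnegativity of $X$ is what allows the pointwise bound $X^2 \le \lambda^2$ on $\{X\le \lambda\}$ (so that we may freely drop that piece), and that the assumption $\lambda < \sqrt{\bE[X^2]}$ is exactly what is needed to guarantee the numerator is positive so that the squaring step does not reverse an inequality.
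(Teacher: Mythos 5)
Your proof is correct. The paper does not prove this lemma at all --- it simply cites it as Lemma~3.5 of Litvak, Pajor, Rudelson and Tomczak-Jaegermann \cite{LPRT05} --- and your argument (split $\bE[X^2]$ over $\{X\le\lambda\}$ and $\{X>\lambda\}$, bound the first piece by $\lambda^2$ using nonnegativity, apply Cauchy--Schwarz to the second) is exactly the standard derivation one would find there. The only degenerate case worth a passing remark is $\bE[X^4]=0$, but then $\bE[X^2]=0$ and the hypothesis $\lambda<\sqrt{\bE[X^2]}$ is vacuous, so no division by zero can occur.
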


We also need the following general hypercontractive estimate, which is a variant of \cite[Theorem~10.21]{Donnell14}.

\begin{lem}\label{lem:moment-hypercontractivity}
	Let $X$ be a vector of independent Bernoulli random variables with minimum atom probability lower bounded by $b$. Let $f$ be a polynomial function of $X$ with degree at most $d$. Then, for all $q\ge 2$ we have that 
	\[
	\bE[|f(X)|^q]^{1/q}\le (\sqrt{q-1}\,b^{1/q-1/2})^d\bE[f(X)^2]^{1/2}.
	\]
\end{lem}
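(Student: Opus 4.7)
The statement is a biased-Bernoulli version of the Bonami--Beckner hypercontractive inequality, and I would prove it by the classical two-step route: (i) establish a sharp two-point hypercontractive inequality on a single biased Bernoulli, and (ii) tensorize this across independent coordinates. The one-variable step is equivalent to the assertion that the biased noise operator $T_{\rho_*}$, with
\[
\rho_* := \frac{1}{\sqrt{q-1}\,b^{1/q-1/2}},
\]
is an $L^2 \to L^q$ contraction on functions of a single Bernoulli of min atom probability $\ge b$.

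For step (i), fix a single Bernoulli $X$ with $\min(\Pr(X=0), \Pr(X=1)) \ge b$ and prove, by direct one-variable computation and optimization over $(a,c) \in \bR^2$, the inequality
\[
\bE[|a + cX|^q]^{1/q} \le \sqrt{q-1}\,b^{1/q-1/2}\,\bE[(a+cX)^2]^{1/2}
\]
for every $q \ge 2$. One expands both sides as explicit weighted sums over the two atoms of $X$ and optimizes the $L^q/L^2$ ratio; the bias correction factor $b^{1/q-1/2}$ emerges from this optimization using the constraint $\min(p,1-p) \ge b$. For step (ii), the tensor product $T_{\rho_*}^{\otimes n}$ on $n$ independent Bernoullis remains an $L^2 \to L^q$ contraction; this tensorization is standard, carried out by induction on $n$ combined with Minkowski's integral inequality (which applies precisely because $q \ge 2$), exactly as in the Rademacher argument of \cite[Chapter~10]{Donnell14} with the biased two-point inequality substituted for the symmetric one.

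Finally, given $f$ of degree $\le d$ with biased Fourier expansion $f = \sum_{S\subseteq [n]} \widehat f_S\,\chi_S$ (so $|S| \le d$), set $g := \sum_S \rho_*^{-|S|}\widehat f_S\,\chi_S$, so that $T_{\rho_*}^{\otimes n} g = f$ and
\[
\bE[g^2] = \sum_S \rho_*^{-2|S|}(\widehat f_S)^2 \le \rho_*^{-2d} \sum_S (\widehat f_S)^2 = \rho_*^{-2d}\,\bE[f^2],
\]
the inequality using $|S| \le d$. Combining with the tensorized contractivity from step (ii) gives
\[
\bE[|f|^q]^{1/q} = \bE\bigl[|T_{\rho_*}^{\otimes n} g|^q\bigr]^{1/q} \le \bE[g^2]^{1/2} \le \rho_*^{-d}\,\bE[f^2]^{1/2} = \bigl(\sqrt{q-1}\,b^{1/q-1/2}\bigr)^d\,\bE[f^2]^{1/2},
\]
which is the claim.

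The main obstacle is step (i): establishing the sharp two-point inequality with the correct $b^{1/q-1/2}$ dependence on the bias. This is a direct but nontrivial one-dimensional optimization, and tracking the extremizer's dependence on the atom probabilities takes some care; one follows the standard proof of Bonami's inequality for the symmetric case $b = 1/2$ and carries the bias corrections through the calculation. The subsequent tensorization in step (ii) and the polynomial-degree bookkeeping using the noise operator are routine once the sharp two-point inequality is in hand.
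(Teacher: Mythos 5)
The paper does not prove this lemma: it cites it as a variant of \cite[Theorem~10.21]{Donnell14} and gives no argument of its own. Your outline is precisely the standard proof of that cited theorem (sharp two-point $(2,q)$-hypercontractivity for a biased Bernoulli, tensorization of the noise operator via Minkowski, then the $T_{\rho_*}^{-1}$ reduction bounding $\|g\|_2 \le \rho_*^{-d}\|f\|_2$ for degree-$\le d$ polynomials), so it matches the reference the paper points to. One caveat worth noting explicitly if you were to write this out in full: the single-variable inequality with the precise factor $b^{1/q-1/2}$ is genuinely the crux, and a crude triangle-inequality/Cauchy--Schwarz bound only yields $\sqrt{1+\|\phi\|_q^2}$ in place of $\sqrt{q-1}\,b^{1/q-1/2}$, which is \emph{not} strong enough to make the noise operator $T_{\rho_*}$ a contraction and hence cannot be tensorized; the careful optimization you defer to (following Bonami's calculation with the bias carried through) is therefore essential, not optional.
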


We now prove \cref{lem:anti-conc}; it is closely modelled after \cite[Proposition~3.4]{LPRT05}.
\begin{proof}[Proof of \cref{lem:anti-conc}]
	Let $\Gamma$ be the first $N = n/4$ columns of $Q_n$. Then, $\Gamma$ is an $n\times N$ matrix with independent rows, with each row distributed as the indicator of the elements in $[N]$ for a randomly chosen size $n/2$ subset of $[n]$. We also have $\norm{\bm{x}} = 1$. Let the elements of $\Gamma$ be $\Gamma_{ij}$, and the columns of $\Gamma$ be $\Gamma_j$. Let
	\[
	y_j = (\bm{x}^{\intercal}\Gamma)_j = \bm{x}^{\intercal}\Gamma_j = \sum_{i=1}^n x_i\Gamma_{ij}.
	\]
	For every $\tau>0$, we have
	\begin{align*}
	\Pr\big\{\norm{\bm{x}^{\intercal}Q_n}\le t\sqrt{N}\big\}&\le\Pr\big\{\norm{\bm{x}^{\intercal}\Gamma}\le t\sqrt{N}\big\} =\Pr\Big\{\sum_{j=1}^Ny_j^2\le t^2N\Big\} = \Pr\Big \{N-\frac{1}{t^2}\sum_{j=1}^N y_j^2\ge 0\Big\}\\
	&\le\bE\exp\bigg(\tau N-\frac{\tau}{t^2}\sum_{j=1}^Ny_j^2\bigg)\le e^{\tau N}\prod_{j=1}^N\sup_{\Gamma_1,\ldots,\Gamma_{j-1}}\bE[\exp(-\tau y_j^2/t^2)|\Gamma_1,\ldots,\Gamma_{j-1}],
	\end{align*}
	where the last inequality is obtained by iterating the law of total expectation and since all terms considered are positive.
	
	Now, we study $y_j$ conditional on $\Gamma_1,\ldots,\Gamma_{j-1}$. Given these values, we have $\bE[y_j^2]\ge \Var[y_j]\ge \frac{2}{9}$ since conditional on the first $j-1$ columns for $j\le n/4$, each entry in the $j^{th}$ column is distributed as $\Ber(p)$ with $p$ between $1/3$ and $2/3$. Next, using hypercontractivity (\cref{lem:moment-hypercontractivity}) with $q=4,b=1/3$ and $d=1$, we find that
	\[
	\bE[y_j^4]\le 27\bE[y_j^2]^2.
	\]
	Thus, setting $\lambda^2 = \bE[y_j^2]/2$ in \cref{lem:paley-zygmund}, we find that
	\begin{align*}
	\Pr\big\{y_j>1/3\big\}&\ge \Pr\big\{y_j^2>\bE[y_j^2]/2\big\}\\
	&\ge \frac{\bE[y_j^2]^2}{4\bE[y_j^4]}\ge\frac{1}{108}.
	\end{align*}
	Using this we immediately find that
	\begin{align*}
	\bE[\exp(-\tau y_j^2/t^2)|\Gamma_1,\ldots,\Gamma_{j-1}] &\le 107/108 + 1/108\exp(-\tau/(9t^2)).
	\end{align*}
	Therefore,
	\[\Pr\big\{\norm{\bm{x}^{\intercal}Q_n}\le t\sqrt{N}\big\}\le e^{\tau N}(107/108+1/108\exp(-\tau/(9t^2)))^N,\]
	and setting $t = 1/45$ and $\tau = 1/500$ gives
	\[\Pr\big\{\norm{\bm{x}^{\intercal}Q_n}\le\sqrt{n}/90\big\}\le\exp(-n/3000).\qedhere\]
\end{proof}

\end{document}